\newcommandx{\todoin}[2][1=]{\todo[inline, caption={todo}, #1]{%
\begin{minipage}{\textwidth-20pt}#2\end{minipage}}}
\newcommand{\vcxymatrix}[1]{\vcenter{\xymatrix{#1}}}
\newcounter{proof}
\newenvironment{myproof}%
{\stepcounter{proof}\begin{proof}}%
{\end{proof}}%
\newcounter{proofstep}[proof]
\newenvironment{proofstep}[1][]%
{\refstepcounter{proofstep}\bigskip\par\noindent%
  \ifthenelse{\isempty{#1}}%if
    {\textsc{Step \theproofstep. }}%then
    {\textsc{#1.}}%else
  \noindent}%
{\par}%
\newcounter{proofcase}[proof]
\newenvironment{proofcase}[1][]%
{\refstepcounter{proofcase}\bigskip\par\noindent%
  \ifthenelse{\isempty{#1}}%if
    {\textsc{Case \theproofcase. }}%then
    {\textsc{#1.}}%else
  \noindent}%
{\par}%
\theoremstyle{plain}
\newtheorem{thm}{Theorem}[section]
\newtheorem*{thm*}{Theorem}
\newtheorem{lem}[thm]{Lemma}
\newtheorem{question}[thm]{Question}
\theoremstyle{definition}
\theoremstyle{remark}
\newtheorem{rem}[thm]{Remark}
\numberwithin{equation}{section}
\newcommandx{\textref}[2][1=]{\hyperref[#2]{#1\ref*{#2}}}
\newcommandx{\textrefp}[2][1=]{(\hyperref[#2]{#1\ref*{#2}})}
\DeclareMathOperator{\union}{\cup}
\DeclareMathOperator{\isect}{\cap}
\newcommand{\dif}{\ensuremath{\, \mathrm d}}
\DeclareMathOperator{\sign}{sign}
\DeclareMathOperator{\Id}{Id}
\DeclareMathOperator{\spn}{span}
\DeclareMathOperator{\cond}{\mathbb{E}}
\DeclareMathOperator{\prob}{\mathbb{P}}
\newcommand{\bmo}{\ensuremath{\mathrm{BMO}}}
\begin{document}

\title[Dimension dependence of factorization problems: bi-parameter Hardy spaces]{Dimension
  dependence of factorization problems: bi-parameter Hardy spaces}

\author[R.~Lechner]{Richard Lechner}

\address{Richard Lechner, Institute of Analysis, Johannes Kepler University Linz, Altenberger
  Strasse 69, A-4040 Linz, Austria}

\email{richard.lechner@jku.at}

\date{\today}

\subjclass[2010]{%
  46B07,% Local theory of Banach spaces
  30H10,% Hardy spaces
  46B25,% Classical Banach spaces in the general theory
  60G46% Martingales and classical analysis
  % 46B06,% Asymptotic theory of Banach spaces
  % 46B26,% Nonseparable Banach spaces
}

\keywords{Factorization, local theory, almost-diagonalization, classical Banach spaces, Hardy
  spaces, \bmo}

\thanks{Supported by the Austrian Science Foundation (FWF) Pr.Nr.  P28352}

\begin{abstract}
  Given $1 \leq p,q < \infty$ and $n\in\mathbb{N}_0$, let $H_n^p(H_n^q)$ denote the canonical
  finite-dimensional bi-parameter dyadic Hardy space.  Let $(V_n : n\in\mathbb{N}_0)$ denote either
  $\bigl(H_n^p(H_n^q) : n\in\mathbb{N}_0\bigr)$ or
  $\bigl( (H_n^p(H_n^q))^* : n\in\mathbb{N}_0\bigr)$.  We show that the identity operator on $V_n$
  factors through any operator $T : V_N\to V_N$ which has large diagonal with respect to the Haar
  system, where $N$ depends \emph{linearly} on $n$.
\end{abstract}

\maketitle

% \tableofcontents

% redefining a command that amsart redefined
\makeatletter
\providecommand\@dotsep{5}
\def\listtodoname{List of Todos}
\def\listoftodos{\@starttoc{tdo}\listtodoname}
\makeatother
% make list of todos
% \listoftodos

\section{Introduction}\label{sec:intro}

\noindent
For each $n\in\mathbb{N}$, suppose that $V_n$ has a normalized $1$-unconditional basis $e_j$,
$1\leq j\leq n$, and let $e_j^*\in V_n^*$, $1\leq j \leq n$ denote the associated coordinate
functionals.  This work is concerned with the following question:
\begin{question}\label{que:factor}
  Given $n\in\mathbb{N}$ and $\delta,\Gamma,\eta > 0$, what is the smallest integer
  $N=N(n,\delta,\Gamma,\eta)$, such that for any operator $T : V_N\to V_N$ satisfying
  \begin{equation}\label{eq:que:factor:hypo}
    \|T\|\leq \Gamma
    \qquad\text{and}\qquad
    |\langle e_j^*, T e_j\rangle|
    \geq \delta,
    \quad 1\leq j\leq N,
  \end{equation}
  there are there operators $E : V_n\to V_N$ and $F : V_N\to V_n$, such that the diagram
  \begin{equation}\label{eq:que:factor:diagram}
    \vcxymatrix{V_n \ar[rr]^{\Id_{V_n}} \ar[d]_E && V_n\\
      V_N \ar[rr]_T && V_N \ar[u]_F}
    \qquad \|E\| \|F\| \leq \frac{1+\eta}{\delta}
  \end{equation}
  is commutative?
\end{question}
In numerous Banach spaces, there exist quantitative estimates for $N$ (see
e.g.~\cite{bourgain:1983,bourgain:tzafriri:1987,mueller:1988,blower:1990,wark:2007:direct,wark:2007:class,mueller:2012,lechner:mueller:2015,lechner:2016-factor-mixed,lechner:2017-local-factor-SL,lechner:2018:1-d}).
To illustrate: the estimate for the relationship between $N$ and $n$ is
\begin{itemize}
\item \emph{linear} for $V_n = \ell_n^p$, $1\leq p \leq \infty$ (see
  e.g.~\cite{bourgain:tzafriri:1987});
\item \emph{polynomial} for the one-parameter dyadic Hardy spaces $H_n^p$, $1\leq p < \infty$
  (see~Section~\ref{sec:notation} for the definition of $H_n^p$) and their duals
  (see~\cite{lechner:2018:1-d}).
\end{itemize}
However, in many other Banach spaces the best known estimates for $N$ are often
\emph{super-exponential}.  To illustrate, put $d_n = 2^{n+1}-1$, $n\in\mathbb{N}_0$, and let
$H_n^p(H_n^q)$, $1 \leq p,q < \infty$ denote the bi-parameter mixed norm dyadic Hardy space with
dimension $d_n^2$ (see~Section~\ref{sec:notation} for the definition of $H_n^p(H_n^q)$).  The best known
estimate for $V_{d_n^2} = H_n^p(H_n^q)$ and $V_{d_n^2} = (H_n^p(H_n^q))^*$, $1 \leq p,q < \infty$ is
a \emph{nested exponential} (see~\cite{lechner:2016-factor-mixed}), e.g. of the form
\begin{equation}\label{eq:nest-exp}
  N\leq 2^{8^n 2^{8^{n-1}2^{8^{n-2}2^{8^{n-3}2^{\iddots}}}}}.
\end{equation}
In this work, we use the new \emph{probabilistic method} introduced in~\cite{lechner:2018:1-d}, to
improve the \emph{super-exponential} estimate~\eqref{eq:nest-exp} to the \emph{linear} estimate
\begin{equation}\label{eq:lin-dep}
  N \leq c n,
  \qquad\text{where $c=c(\delta,\Gamma,\eta) > 0$}.
\end{equation}

%%% Local Variables: 
%%% mode: latex
%%% TeX-master: "main"
%%% End: 

\section{Notation}\label{sec:notation}

\noindent
Let $\mathcal{D}$ denote the \emph{dyadic intervals} contained in the unit interval~$[0,1)$, i.e.
\begin{equation*}%\label{eq:dyadic-intervals}
  \mathcal{D} = \{[(k-1)2^{-n},k2^{-n}) : n\in \mathbb{N}_0, 1\leq k\leq 2^n\}.
\end{equation*}
Let $|\cdot|$ denote the Lebesgue measure.  For any $N\in\mathbb{N}_0$, we put
\begin{equation*}%\label{eq:dyadic-intervals:levels}
  \mathcal{D}_N = \{I\in\mathcal{D} : |I|=2^{-N}\}
  \qquad\text{and}\qquad
  \mathcal{D}_{\leq N} = \bigcup_{n=0}^N\mathcal{D}_n.
\end{equation*}
Given $n\in\mathbb{N}_0$ and a dyadic interval $I\in\mathcal{D}_n$, we define
$I^-, I^+\in\mathcal{D}_{n+1}$ by
\begin{equation*}%\label{eq:dyadic-intervals:children}
  I^+ \cup I^- = I
  \qquad\text{and}\qquad
  \inf I^+ < \inf I^-.
\end{equation*}
For any two collections $\mathcal{A},\mathcal{B}\subset\mathcal{D}$, we introduce the following
notation:
\begin{equation*}%\label{eq:dyadic-rectangles}
  \mathcal{A}\otimes \mathcal{B}
  = \{I\times J: I\in\mathcal{A},\ J\in\mathcal{B}\}.
\end{equation*}

The $L^\infty$-normalized \emph{Haar system} $h_I$, $I\in\mathcal{D}$ is given by
\begin{equation*}%\label{eq:haar-system}
  h_I = \chi_{I^+}-\chi_{I^-},
  \qquad I\in\mathcal{D},
\end{equation*}
where $\chi_A$ denotes the characteristic function of the set $A\subset[0,1)$.  Given
$1\leq p < \infty$, the \emph{one-parameter dyadic Hardy space} $H^p$ is the completion of
\begin{equation*}
  \spn\{ h_I : I \in \mathcal{D} \}
\end{equation*}
under the square function norm
\begin{equation*}%\label{eq:Hp-norm}
  \|f\|_{H^p}
  = \Big(
  \int_0^1 \big(
  \sum_I |a_I|^2 h_I^2(x)
  \big)^{p/2}
  \dif x
  \Big)^{1/p}
  ,
\end{equation*}
where $f = \sum_I a_I h_I$.  For all $n\in\mathbb{N}_0$, we define the finite-dimensional subspaces
$H_n^p$ of $H^p$ by
\begin{equation*}%\label{eq:finite-dimensional-spaces}
  H_n^p = \spn\{h_I : I\in\mathcal{D}_{\leq n}\}.
\end{equation*}

The \emph{bi-parameter} $L^\infty$-normalized \emph{Haar system} $h_{I\times J}$,
$I, J\in\mathcal{D}$ is given by
\begin{equation*}
  h_{I\times J} = h_I\otimes h_J,
  \qquad I, J\in\mathcal{D},
\end{equation*}
where the tensor product of two functions $f,g :[0,1)\to \mathbb{R}$ is defined by
\begin{equation*}
  \bigl(f\otimes g\bigr)(x,y)
  = f(x)g(x),
  \qquad x,y\in [0,1).
\end{equation*}
For $1\leq p,q < \infty$, the \emph{bi-parameter dyadic Hardy space} $H^p(H^q)$ is the completion of
\begin{equation*}
  \spn\{ h_{R} : R \in \mathcal{D}\otimes\mathcal{D} \}
\end{equation*}
under the square function norm
\begin{equation*}%\label{eq:HpHq-norm}
  \|f\|_{H^p(H^q)}
  = \bigg(
  \int_0^1 \Big(
  \int_0^1 \big(
  \sum_{R\in\mathcal{D}\otimes\mathcal{D}} |a_{R}|^2 h_{R}^2(x,y)
  \big)^{q/2}
  \dif y
  \Big)^{p/q}
  \dif x
  \bigg)^{1/p}
  ,
\end{equation*}
where $f = \sum_{R\in\mathcal{D}\otimes\mathcal{D}} a_{R} h_{R}$.  For each $n\in\mathbb{N}_0$, we
define the finite-dimensional subspace $H_n^p(H_n^q)$ of $H^p(H^q)$ by
\begin{equation*}
  H_n^p(H_n^q)
  = \spn\{ h_R : R\in \mathcal{D}_{\leq n}\otimes\mathcal{D}_{\leq n}\}.
\end{equation*}

%%% Local Variables:
%%% mode: latex
%%% TeX-master: "main"
%%% End:

\section{Main result}\label{sec:results}

\noindent
Recall that we put $d_n = 2^{n+1}-1$, $n\in\mathbb{N}_0$, and let $1\leq p,q < \infty$.  We give a
quantitative estimate for the $N$ appearing in Question~\ref{que:factor} for the spaces
$V_{d_n} = H_n^p(H_n^q)$ and $V_{d_n} = (H_n^p(H_n^q))^*$.  In particular, the relation between $N$
and $n$ is \emph{linear}.
\begin{thm}\label{thm:results:factor}
  Let $1 \leq p,q < \infty$, and let $(V_k : k\in\mathbb{N}_0)$ denote either
  \begin{equation}\label{eq:thm:results:factor:spaces}
    (H_k^p(H_k^q) : k\in\mathbb{N}_0)
    \qquad\text{or}\qquad
    \bigl((H_k^p(H_k^q))^* : k\in\mathbb{N}_0\bigr).
  \end{equation}
  Let $n \in \mathbb{N}_0$ and $\delta,\Gamma,\eta > 0$.  Define the integer
  $N=N(n,\delta,\Gamma,\eta)$ by the formula
  \begin{equation}\label{eq:thm:results:factor:dim}
    N
    = 41(n+3) + \bigl\lfloor
    4\log_2(\Gamma/\delta) + 4\log_2\bigl(1+\eta^{-1}\bigr)
    \bigr\rfloor
    .
  \end{equation}
  Then for any operator $T : V_N\rightarrow V_N$ satisfying
  \begin{equation}\label{eq:thm:results:factor:large}
    \|T\|
    \leq \Gamma
    \qquad\text{and}\qquad
    |\langle T h_Q, h_Q \rangle|
    \geq \delta |Q|,
    \quad Q\in \mathcal{D}_{\leq N}\otimes \mathcal{D}_{\leq N},
  \end{equation}
  there exist bounded linear operators $E : V_n\to V_N$ and $F : V_N\to V_n$, such that the diagram
  \begin{equation}\label{eq:thm:results:factor:diag}
    \vcxymatrix{V_n \ar[rr]^{\Id_{V_n}} \ar[d]_E && V_n\\
      V_N \ar[rr]_T && V_N \ar[u]_F}
    \qquad \|E\|\|F\| \leq \frac{1+\eta}{\delta}
  \end{equation}
  is commutative.
\end{thm}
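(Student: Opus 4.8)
The strategy is the now-standard "probabilistic local factorization" scheme, adapted from the one-parameter case in \cite{lechner:2018:1-d} to the bi-parameter mixed-norm setting, and it rests on producing, inside the large index set $\mathcal D_{\leq N}\otimes\mathcal D_{\leq N}$, a well-distributed family of dyadic rectangles on which a random selection of Haar supports behaves like a faithful copy of the basis of $V_n$. Concretely, I would first reduce the problem, via the usual Gamlen--Gurariy-type argument, to finding a collection $(\mathcal B_{I\times J})_{I\times J\in\mathcal D_{\leq n}\otimes\mathcal D_{\leq n}}$ of pairwise disjoint subsets of $\mathcal D_{\leq N}\otimes\mathcal D_{\leq N}$ — the "blocks" — together with signs, so that the normalized block-bases $b_{I\times J}=\sum_{Q\in\mathcal B_{I\times J}}\pm|Q|\,h_Q$ (suitably chosen so each $b_{I\times J}$ is supported on a product of "frozen" dyadic sub-intervals) satisfy two things: (i) the map $h_{I\times J}\mapsto b_{I\times J}$ extends to an isomorphic embedding $E\colon V_n\to V_N$ with good constants and a bounded "reading" projection, and (ii) $T$ is almost diagonal with respect to $(b_{I\times J})$, meaning $\langle T b_{I\times J}, b_{K\times L}\rangle$ is close to $\langle Tb_{I\times J},b_{I\times J}\rangle\,\delta_{(I,J),(K,L)}$ after normalization. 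Once (i) and (ii) are in place, the diagonal of $T$ on the block basis is $\geq\delta$ in modulus by hypothesis \eqref{eq:thm:results:factor:large}, so one inverts the diagonal part and absorbs the off-diagonal error into $F$, yielding $\|E\|\|F\|\le(1+\eta)/\delta$; this last algebraic step is routine and is where the $4\log_2(1+\eta^{-1})$ term in \eqref{eq:thm:results:factor:dim} gets spent.

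**The combinatorial / probabilistic core.** The heart of the matter is constructing the blocks so that the almost-diagonalization (ii) holds with total off-diagonal error controlled by $\eta\delta/\Gamma$, while keeping $N$ \emph{linear} in $n$. Here I would run the probabilistic argument in each parameter separately and then tensorize. In one parameter: split the $N$ available levels into $n+O(1)$ consecutive "generations," each generation being a long interval of scales of length $\sim 41$ (this explains the constant $41$ and the shift by $n+3$); inside the $k$-th generation, for each already-constructed frozen interval one chooses the Haar support of the next block by a random walk down the dyadic tree, and a Chernoff/Bernstein estimate shows that with positive probability the resulting block bases are simultaneously $(1+\varepsilon)$-equivalent to the Haar basis of $H_k^p$ \emph{and} enjoy the small-off-diagonal Gram estimate against $T$, using the unconditionality of the Haar system and the fact that $\|T\|\le\Gamma$. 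The $4\log_2(\Gamma/\delta)$ summand buys the extra depth needed so that the geometric decay of off-diagonal Haar interactions (the "Haar multiplier / paraproduct" type estimate) beats the ratio $\Gamma/\delta$. For the bi-parameter space one applies this in the $x$-variable and the $y$-variable and checks that the mixed norm $\|\cdot\|_{H^p(H^q)}$ of a product block basis is controlled by the product of the one-parameter estimates — this uses Fubini and the fact that the square function factors through the tensor structure.

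**The dual case and the main obstacle.** For $(V_k)=((H_k^p(H_k^q))^*)$ one would dualize: an isomorphic embedding $E\colon V_n\to V_N$ together with a bounded projection onto its range dualizes to the analogous pair for the dual spaces, and the large-diagonal hypothesis is self-dual up to the constants in \eqref{eq:thm:results:factor:large} since $\langle T^*h_Q,h_Q\rangle=\langle Th_Q,h_Q\rangle$; so it suffices to prove the embedding/projection statement once, for $V_n=H_n^p(H_n^q)$, with constants independent of $p,q$, and read off the dual version. I expect the main obstacle to be \emph{simultaneously} achieving the almost-diagonalization of $T$ and the uniform (in $p,q$) isomorphic-embedding estimate for the \emph{bi-parameter} block bases: in the mixed-norm space the square function does not interact with rearrangements of Haar supports as cleanly as in $H^p$, so the "faithful copy" lemma needs the frozen supports to be genuine products $A\times B$ with $A,B$ dyadic, and one must verify that a random product block basis still spans a $(1+\varepsilon)$-copy of $H_k^p(H_k^q)$ — this is the step where the linear (rather than polynomial or exponential) bound on $N$ is won or lost, and it is exactly the place where the probabilistic method of \cite{lechner:2018:1-d} has to be pushed from one parameter to two. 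Everything downstream — the Gamlen--Gurariy reduction, inverting the diagonal, and the dual transference — is soft by comparison.
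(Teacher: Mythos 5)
Your high-level architecture --- build a block basis indexed by $\mathcal D_{\leq n}\otimes\mathcal D_{\leq n}$ inside $\mathcal D_{\leq N}\otimes\mathcal D_{\leq N}$ that embeds $V_n$ with a bounded projection, almost-diagonalize $T$ against it, invert the diagonal, and handle the dual by duality --- does match the paper. But the probabilistic mechanism you propose for the crucial step is different from what is needed, and as described it has a genuine gap.

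You propose to choose the Haar \emph{supports} of the blocks by a random walk down the dyadic tree and to show by a Chernoff/Bernstein estimate that, with positive probability, the resulting block bases are simultaneously $(1+\varepsilon)$-equivalent to the Haar basis \emph{and} make the Gram entries $\langle Tb_{R},b_{R'}\rangle$ small off the diagonal. For an arbitrary bounded operator $T$ this does not work: when you randomize positions, the Gram entries are not independent, not centered, and have no exponential tail one can exploit, so a Chernoff-type bound is not available. Moreover, you are then fighting on two fronts at once (equivalence constant of the block basis and smallness of the off-diagonal), and there is no reason a random walk would win both simultaneously with depth only linear in $n$.

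The actual mechanism separates these two concerns. The geometry of the blocks is \emph{deterministic}: one fixes a minimalist Gamlen--Gaudet construction $\mathcal X_I,\mathcal Y_J\subset\mathcal D_{\leq N}$, $I,J\in\mathcal D_{\leq n}$, by putting $\mathcal X_{[0,1)}=\mathcal Y_{[0,1)}=\mathcal D_{m_0}$ and halving ($K\mapsto K^+,K^-$) as one descends. This satisfies Jones' compatibility condition with $\kappa=1$, and by the tensor-product lemma the product collections satisfy Capon's local product condition, so the block basis
$b^{(\theta,\varepsilon)}_{I\times J}=\bigl(\sum_{K\in\mathcal X_I}\theta_Kh_K\bigr)\otimes\bigl(\sum_{L\in\mathcal Y_J}\varepsilon_Lh_L\bigr)$
spans an \emph{isometric}, norm-one complemented copy of $V_n$, for \emph{every} choice of signs and simultaneously for $H^p(H^q)$ and its dual. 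The only randomness is in the signs $\theta,\varepsilon\in\{\pm1\}^{\mathcal D}$, and the almost-diagonalization is a \emph{second-moment} (Chebyshev) argument, not a Chernoff argument: one computes $\cond_{\theta,\varepsilon}$ of the squares of the off-diagonal entries and of the centered diagonal entries, splitting into the four families according to which of $I\neq I'$, $J\neq J'$ hold, and uses the elementary identity $\cond_\zeta\,\zeta_{S_0}\zeta_{S_1}\zeta_{S_0'}\zeta_{S_1'}\in\{0,1\}$ together with the disjointness coming from Jones' condition to reduce each variance to a sum of $\langle Th_{K\times L},h_{K'\times L'}\rangle^2$ over pairing patterns. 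This yields variance bounds of order $\|T\|^2\alpha^{1/2}$ with $\alpha=2^{-m_0}$, and a union bound over the $\lesssim 2^{4(n+3)}$ relevant pairs of rectangles produces a single good $(\theta,\varepsilon)$ once $2^{m_0/2}$ beats $2^{4(n+3)}\Gamma^2/\eta_0^2$ with $\eta_0\sim\eta\delta(1+\eta)^{-1}2^{-8(n+2)}$. Solving for the total depth $m_0+n$ gives $N\approx\bigl(8(n+3)+32(n+2)+n\bigr)+4\log_2(\Gamma/\delta)+4\log_2(1+\eta^{-1})$.

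This also corrects two of your side claims: the constant $41$ is not ``generations of length $\approx41$''; it arises as $8+32+1$ from the exponents in the union bound, the size of $\eta_0$, and the $n$ extra Gamlen--Gaudet levels. And the term $4\log_2(\Gamma/\delta)+4\log_2(1+\eta^{-1})$ is not spent in the Neumann inversion; it enters through the depth $m_0$ required to make the sign-variance small enough. The Neumann inversion and the dual transference are indeed soft and downstream, as you say. The step to fix is the almost-diagonalization: drop the random walk, fix the Gamlen--Gaudet geometry so the embedding is exactly isometric, and randomize only the signs.
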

Note that the linear relation between $N$ and $n$ amounts to a polynomial relation between
the dimensions of the respective spaces; i.e.~$\dim V_N$ is a polynomial in $\dim V_n$.

Formula~\eqref{eq:thm:results:factor:dim} is the main focus of this work.  Specifically, we improve
the previously best known estimate for the relation between $N$ and $n$ in $H_N^p(H_N^q)$ and
$(H_N^p(H_N^q))^*$, $1\leq p,q <\infty$ (see~\cite{lechner:2016-factor-mixed}), \emph{from
  super-exponential to linear} (which means from~\eqref{eq:nest-exp} to~\eqref{eq:lin-dep}).  The
super-exponential growth in~\cite{lechner:2016-factor-mixed} is caused by the use of
\emph{combinatorics}.  The same is true even in one-parameter spaces
(see~e.g.~\cite{mueller:1988,mueller:2012,lechner:mueller:2015,lechner:2017-local-factor-SL}).

Recently, using a \emph{probabilistic} approach (see~\cite{lechner:2018:1-d}), \emph{linear}
estimates for $N$ in $n$ were obtained in the context of one-parameter spaces.  In this work, we
extend this probabilistic method to the bi-parameter spaces $H_N^p(H_N^q)$ and $(H_N^p(H_N^q))^*$,
$1\leq p,q <\infty$, and thereby obtain the formula~\eqref{eq:thm:results:factor:spaces}.

%%% Local Variables:
%%% mode: latex
%%% TeX-master: "main"
%%% End:

\section{Tensor products, embeddings and projections in mixed norm
  spaces}\label{sec:tensor-products}

\noindent
This section consists of two major parts: The first part connects Jones' compatibility
condition~(\hyperref[enu:j1]{J}) to Capon's local product
condition~\textrefp[P]{enu:p1}--\textrefp[P]{enu:p4}.  In the second part, we show that every
operator on a bi-parameter Hardy space is almost-diagonalized by a properly constructed randomized
block basis.  Both parts are vital components in the proof of our main result
Theorem~\ref{thm:results:factor}.

\subsection{Jones' compatibility condition and Capon's local product condition}\label{sec:jones}

Given $\mathcal{Z}_I\subset\mathcal{D}$, $I\in\mathcal{D}$, we put $Z_I = \bigcup \mathcal{Z}_I$.
We say that the collections $\mathcal{Z}_I$, $I\in\mathcal{D}$ satisfy \emph{Jones' compatibility
  condition~(\hyperref[enu:j1]{J})} (see~\cite{jones:1985}; see also~\cite{mueller:2005}) with
constant $\kappa\geq 1$, if the following four conditions are satisfied:
\begin{enumerate}[(J1)]
\item\label{enu:j1} For each $I\in\mathcal{D}$, the collection $\mathcal{Z}_I$ consists of finitely
  many pairwise disjoint dyadic intervals; moreover,
  $\mathcal{Z}_I\cap \mathcal{Z}_{I'} = \emptyset$, whenever $I,I'\in\mathcal{D}$, $I\neq I'$.

\item\label{enu:j2} For every $I\in\mathcal{D}$, we have that $Z_{I^-}\cup Z_{I^+}\subset Z_I$ and
  $Z_{I^-}\cap Z_{I^+} = \emptyset$.

\item\label{enu:j3} $\kappa^{-1} |I| \leq |Z_I| \leq \kappa |I|$, for all $I\in\mathcal{D}$.

\item\label{enu:j4} For all $I_0,I\in \mathcal{D}$ with $I_0\subset I$ and $K\in \mathcal{Z}_I$, we
  have $\frac{|K\cap Z_{I_0}|}{|K|} \geq \kappa^{-1} \frac{|Z_{I_0}|}{|Z_I|}$.
\end{enumerate}

Jones' compatibility condition~(\hyperref[enu:j1]{J}) is crucial to construct block bases of the
Haar system onto which the natural projection is bounded in $H^1$; especially \textrefp[J]{enu:j4}.
Lemma~\ref{lem:tensor-product} below asserts that the tensor product of collections satisfying Jones'
compatibility condition~(\hyperref[enu:j1]{J}) satisfies Capon's local product
condition~\textrefp[P]{enu:p1}--\textrefp[P]{enu:p4} (see~\cite{laustsen:lechner:mueller:2015}).
Capon's local product condition is used to construct block bases of the bi-parameter Haar system
onto which the natural projection onto that block basis is bounded in $H^p(H^q)$,
$1\leq p,q < \infty$; \textrefp[P]{enu:p4} is crucial for the endpoint spaces $p=1$ or $q=1$.

\begin{lem}\label{lem:tensor-product}
  Let $\mathcal{X}_I\subset\mathcal{D}$, $I\in\mathcal{D}$ and $\mathcal{Y}_J\subset\mathcal{D}$,
  $J\in\mathcal{D}$ both satisfy condition~(\hyperref[enu:j1]{J}) with constant $\kappa\geq 1$.
  Define
  \begin{subequations}\label{eq:lem:tensor-product}
    \begin{equation}\label{eq:lem:tensor-product:a}
      \mathcal{B}_{I\times J}
      = \mathcal{X}_I\otimes\mathcal{Y}_J
      = \{K\times L\, :\, K\in \mathcal{X}_I,\, L\in \mathcal{Y}_J\},
      \qquad I,J\in \mathcal{D},
    \end{equation}
    and put
    \begin{equation}\label{eq:lem:tensor-product:b}
      X_I = \bigcup \mathcal{X}_I,
      \quad I\in\mathcal{D}
      \qquad\text{as well as}\qquad
      Y_J = \bigcup \mathcal{Y}_J,
      \quad J\in \mathcal{D}.
    \end{equation}
  \end{subequations}
  Then $\mathcal{B}_R$, $R\in\mathcal{D}\otimes\mathcal{D}$ satisfies Capon's local product
  condition~\textrefp[P]{enu:p1}--\textrefp[P]{enu:p4} with constants $C_X = C_Y = \kappa$, i.e. the
  following four properties~\textrefp[P]{enu:p1}, \textrefp[P]{enu:p2}, \textrefp[P]{enu:p3}
  and~\textrefp[P]{enu:p4} hold true:
  \begin{enumerate}[(P1)]
  \item\label{enu:p1}%
    For all $R\in \mathcal{D}\otimes\mathcal{D}$ the collection $\mathcal{B}_R$ consists of pairwise
    disjoint dyadic rectangles, and for all $R_0,R_1\in\mathcal{D}\otimes\mathcal{D}$ with
    $R_0\neq R_1$ we have $\mathcal{B}_{R_0} \isect \mathcal{B}_{R_1} = \emptyset$.
  \item\label{enu:p2}%
    For all $I, J, I_0, J_0, I_1, J_1\in\mathcal{D}$ with $I_0 \cap I_1 = \emptyset$,
    $I_0\union I_1\subset I$ and $J_0 \cap J_1 = \emptyset$, $J_0\union J_1\subset J$ we have
    \begin{align*}
      X_{I_0}\isect X_{I_1} &= \emptyset, &X_{I_0}\union X_{I_1} & \subset X_I,\\
      Y_{J_0}\isect Y_{J_1} &= \emptyset, &Y_{J_0}\union Y_{J_1} & \subset Y_J.
    \end{align*}
  \item\label{enu:p3}%
    For every $I, J\in\mathcal{D}$ we have
    \begin{equation*}
      \kappa^{-1} |I|
      \leq |X_I|
      \leq \kappa |I|
      \qquad\text{and}\qquad
      \kappa^{-1} |J|
      \leq |Y_J|
      \leq \kappa |J|.
    \end{equation*}
  \item\label{enu:p4}%
    For all $I_0, J_0, I, J\in \mathcal{D}$ with $I_0\subset I$, $J_0\subset J$ and for every
    $K\in \mathcal{X}_I$, $L\in \mathcal{Y}_{J}$, we have
    \begin{equation*}
      \frac{|K\isect X_{I_0}|}{|K|} \geq  \kappa^{-1}\frac{|X_{I_0}|}{|X_I|}
      \qquad\text{and}\qquad
      \frac{|L\isect Y_{J_0}|}{|L|} \geq  \kappa^{-1}\frac{|Y_{J_0}|}{|Y_J|}.
    \end{equation*}
  \end{enumerate}
\end{lem}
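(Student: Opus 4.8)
The statement is essentially a "tensorization" result: one takes two families of collections each satisfying Jones' one-parameter compatibility condition~(\hyperref[enu:j1]{J}), forms the rectangle-wise tensor product, and must verify the four parts of Capon's local product condition. The plan is to check (P1)--(P4) one at a time, in each case reducing the two-dimensional assertion to the corresponding one-dimensional fact about $\mathcal X_I$ (resp.~$\mathcal Y_J$) supplied by \textrefp[J]{enu:j1}--\textrefp[J]{enu:j4}, together with the elementary identity $|A\times B| = |A|\,|B|$ for the Lebesgue measure of a rectangle and the fact that $(A\times B)\cap(A'\times B') = (A\cap A')\times(B\cap B')$.

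\textbf{Step 1 (P1).} Fix $R = I\times J$. The elements of $\mathcal B_R$ are rectangles $K\times L$ with $K\in\mathcal X_I$, $L\in\mathcal Y_J$; pairwise disjointness follows because if $K\times L \neq K'\times L'$ then either $K\neq K'$ (and then $K,K'$ are disjoint by~\textrefp[J]{enu:j1}, so the rectangles are disjoint) or $L\neq L'$ (likewise). For the second assertion, suppose $K\times L\in\mathcal B_{R_0}\cap\mathcal B_{R_1}$ with $R_0 = I_0\times J_0$, $R_1 = I_1\times J_1$ distinct. Then $K\in\mathcal X_{I_0}\cap\mathcal X_{I_1}$, forcing $I_0 = I_1$ by the disjointness clause of~\textrefp[J]{enu:j1} applied to $\mathcal X$; similarly $L\in\mathcal Y_{J_0}\cap\mathcal Y_{J_1}$ forces $J_0 = J_1$; hence $R_0 = R_1$, a contradiction.

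\textbf{Step 2 (P2) and (P3).} Note first that $X_I = \bigcup\mathcal X_I$ and $Y_J = \bigcup\mathcal Y_J$ in~\eqref{eq:lem:tensor-product:b} are exactly the sets $Z_I$ attached to the one-parameter families $\mathcal X$ and $\mathcal Y$. Therefore~\textrefp[P]{enu:p2} is just~\textrefp[J]{enu:j2} applied separately to $\mathcal X$ (with the pair $I_0,I_1\subset I$) and to $\mathcal Y$ (with $J_0,J_1\subset J$) — one only needs that \textrefp[J]{enu:j2} iterates along descending chains of dyadic intervals, which is immediate from disjointness together with the two-child case. Likewise~\textrefp[P]{enu:p3} is precisely~\textrefp[J]{enu:j3} for $\mathcal X$ and for $\mathcal Y$ with $C_X = C_Y = \kappa$; there is nothing to tensorize here.

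\textbf{Step 3 (P4), the substantive point.} Fix $I_0\subset I$, $J_0\subset J$, $K\in\mathcal X_I$, $L\in\mathcal Y_J$. Using $(K\times L)\cap(X_{I_0}\times Y_{J_0}) = (K\cap X_{I_0})\times(L\cap Y_{J_0})$ and multiplicativity of Lebesgue measure,
\begin{equation*}
  \frac{|(K\times L)\cap(X_{I_0}\times Y_{J_0})|}{|K\times L|}
  = \frac{|K\cap X_{I_0}|}{|K|}\cdot\frac{|L\cap Y_{J_0}|}{|L|}
  \geq \kappa^{-1}\frac{|X_{I_0}|}{|X_I|}\cdot\kappa^{-1}\frac{|Y_{J_0}|}{|Y_J|},
\end{equation*}
where the inequality is~\textrefp[J]{enu:j4} applied to $\mathcal X$ (on $I_0\subset I$, $K\in\mathcal X_I$) and to $\mathcal Y$ (on $J_0\subset J$, $L\in\mathcal Y_J$). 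Since $|X_I\times Y_J| = |X_I|\,|Y_J|$ and $|X_{I_0}\times Y_{J_0}| = |X_{I_0}|\,|Y_{J_0}|$, the right-hand side is $\kappa^{-2}\,|X_{I_0}\times Y_{J_0}|/|X_I\times Y_J|$, which is the stated lower bound with the product constant.

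\textbf{Main obstacle.} There is no deep obstacle; the only care needed is bookkeeping — making sure that the sets denoted $X_I,Y_J$ in the lemma really are the one-parameter $Z$-sets so that conditions \textrefp[J]{enu:j1}--\textrefp[J]{enu:j4} apply verbatim, and that the tensorial set identities ($|A\times B| = |A||B|$, $(A\times B)\cap(A'\times B') = (A\cap A')\times(B\cap B')$) are invoked correctly. If the intended reading of~\textrefp[P]{enu:p4} uses the product constant $C_XC_Y$ rather than a single $\kappa$, Step 3 already yields $\kappa^{-2} = (C_XC_Y)^{-1}$, matching the claim $C_X = C_Y = \kappa$.
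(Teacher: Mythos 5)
Your proof is correct and matches the paper's approach, which simply asserts that (P1)--(P4) follow directly from (J1)--(J4); you are filling in the routine verification the paper leaves to the reader. One small reading note on Step~3: the conclusion of \textrefp[P]{enu:p4} as stated in the lemma is the \emph{pair} of one-dimensional inequalities (one for the $\mathcal X$-family, one for the $\mathcal Y$-family), each of which is literally \textrefp[J]{enu:j4} applied to the respective one-parameter collection with $X_I = Z_I$ resp.\ $Y_J = Z_J$; so the product inequality with constant $\kappa^{-2}$ that you derive by multiplying is stronger than, but not what is asked. Since you explicitly invoke \textrefp[J]{enu:j4} separately for $\mathcal X$ and for $\mathcal Y$ along the way, the two required inequalities are established and the argument is complete. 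Your observation in Step~2 that \textrefp[P]{enu:p2} requires iterating \textrefp[J]{enu:j2} from siblings to general disjoint dyadic subintervals of $I$ (by passing to the minimal common ancestor) is also the right point to flag; it is indeed immediate but worth a word, as you give.
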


\begin{proof}
  \textrefp[P]{enu:p1}--\textrefp[P]{enu:p4} follow directly
  from~\textrefp[J]{enu:j1}--\textrefp[J]{enu:j4}.
\end{proof}

\begin{rem}
  The conditions \textrefp[P]{enu:p1}--\textrefp[P]{enu:p4} were introduced
  in~\cite{laustsen:lechner:mueller:2015} in a more general form: the collections
  $\mathcal{B}_{I\times J}$, $I,J\in\mathcal{D}$ in~\cite{laustsen:lechner:mueller:2015} have
  \emph{local product structure}, i.e. there exist collections $\mathcal{X}_{I\times J}$,
  $\mathcal{Y}_{I\times J}$, $I,J\in\mathcal{D}$ such that
  \begin{equation}\label{eq:local-product}
    \mathcal{B}_{I\times J}
    = \{K\times L\, :\, K\in \mathcal{X}_{I\times J},\, L\in \mathcal{Y}_{I\times J}\},
    \qquad I,J\in \mathcal{D}.
  \end{equation}
  
  In Lemma~\ref{lem:tensor-product}, we have a special case of~\eqref{eq:local-product}: \emph{true
    product structure} (see~\eqref{eq:lem:tensor-product}).  To highlight the distinction
  explicitly, in Lemma~\ref{lem:tensor-product} we have that $\mathcal{X}_{I\times J}$ does not depend on
  $J$ and that $\mathcal{Y}_{I\times J}$ does not depend on $I$.
\end{rem}

\begin{lem}\label{lem:block:basic-estimate}
  Let $\mathcal{X}$ and $\mathcal{Y}$ each a denote non-empty, finite collection of pairwise
  disjoint dyadic intervals, and define $X = \bigcup\mathcal{X}$ as well as
  $Y = \bigcup\mathcal{Y}$.  Given $\theta,\varepsilon \in \{\pm 1\}^{\mathcal{D}}$, put
  \begin{equation}\label{lem:block:basic-estimate:func}
    b^{(\theta,\varepsilon)}
    = \sum_{\substack{K\in\mathcal{X}\\L\in\mathcal{Y}}} \theta_K\varepsilon_L h_{K\times L}.
  \end{equation}
  Then:
  \begin{equation}
    \label{eq:lem:block:basic-estimate}
    \|b^{(\theta,\varepsilon)}\|_{H^p(H^q)} = |X|^{1/p}|Y|^{1/q}
    \qquad\text{and}\qquad
    \|b^{(\theta,\varepsilon)}\|_{(H^p(H^q))^*} = |X|^{1/{p'}}|Y|^{1/{q'}},
  \end{equation}
  where $1\leq p,q < \infty$, $1< p',q' \leq \infty$ with
  $\frac{1}{p}+\frac{1}{{p'}} = \frac{1}{q}+\frac{1}{{q'}} = 1$.
\end{lem}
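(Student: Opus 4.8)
The plan is to compute the two norms directly from the definitions, exploiting the product structure of the Haar functions $h_{K\times L} = h_K \otimes h_L$ and the fact that the rectangles $K\times L$, $K\in\mathcal{X}$, $L\in\mathcal{Y}$, are pairwise disjoint with union $X\times Y$. First I would observe that the square function of $b^{(\theta,\varepsilon)}$ is especially simple: since the coefficients are all $\pm 1$ and $h_{K\times L}^2 = \chi_{K\times L}$ (as $h_I^2 = \chi_I$ for the $L^\infty$-normalized Haar system), we get
\begin{equation*}
  \sum_{K\in\mathcal{X},\, L\in\mathcal{Y}} |\theta_K\varepsilon_L|^2 h_{K\times L}^2(x,y)
  = \sum_{K\in\mathcal{X},\, L\in\mathcal{Y}} \chi_{K\times L}(x,y)
  = \chi_{X\times Y}(x,y) = \chi_X(x)\chi_Y(y),
\end{equation*}
using~(\hyperref[enu:j1]{J})-type disjointness of the $K$'s and of the $L$'s. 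Hence the square function equals $\chi_X(x)\chi_Y(y)$ pointwise.

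Plugging this into the mixed-norm formula for $\|\cdot\|_{H^p(H^q)}$, the inner $y$-integral of $(\chi_X(x)\chi_Y(y))^{q/2}$ over $[0,1)$ equals $\chi_X(x)\,|Y|$, then raising to the power $p/q$ gives $\chi_X(x)\,|Y|^{p/q}$, and the outer $x$-integral gives $|X|\,|Y|^{p/q}$; taking the $p$-th root yields $|X|^{1/p}|Y|^{1/q}$, which is the first identity. For the dual norm, I would use the standard duality $(H^p(H^q))^* = H^{p'}(H^{q'})$ together with the fact that the Haar system is orthogonal in $L^2$ with $\|h_{K\times L}\|_{L^2}^2 = |K\times L|$. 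The natural candidate for the norming functional of $b^{(\theta,\varepsilon)}$ in $(H^p(H^q))^*$ is (a normalization of) $b^{(\theta,\varepsilon)}$ itself, since $\langle b^{(\theta,\varepsilon)}, b^{(\theta,\varepsilon)}\rangle = \sum_{K,L}|K\times L| = |X||Y|$, and the $H^{p'}(H^{q'})$-norm of $b^{(\theta,\varepsilon)}$ is $|X|^{1/p'}|Y|^{1/q'}$ by the already-proven first identity applied with $(p,q)$ replaced by $(p',q')$ — here one must handle the endpoint cases $p'=\infty$ or $q'=\infty$ separately, since then $H^{p'}(H^{q'})$ is a $\bmo$-type space and the computation uses the $\bmo$ square-function characterization, but the arithmetic is unchanged. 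Combining, $\langle b^{(\theta,\varepsilon)}, b^{(\theta,\varepsilon)}\rangle = |X||Y| = \|b^{(\theta,\varepsilon)}\|_{H^p(H^q)} \cdot |X|^{1/p'}|Y|^{1/q'}$, which forces $\|b^{(\theta,\varepsilon)}\|_{(H^p(H^q))^*} \geq |X|^{1/p'}|Y|^{1/q'}$, and the reverse inequality is Hölder's inequality in the mixed-norm setting.

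The main obstacle, and the only point requiring care, is the endpoint behavior: when $p=1$ and/or $q=1$, the dual space is not of the form $H^{p'}(H^{q'})$ with a genuine Lebesgue exponent but rather a bi-parameter $\bmo$ (or $\bmo(H^{q'})$) space, so the clean duality-plus-self-norming argument needs the correct characterization of these dual norms on finite Haar sums. I would either invoke the known identification of $(H^p(H^q))^*$ on the Haar subspace in terms of a square-function-type expression (so that the computation $\|b^{(\theta,\varepsilon)}\|_{(H^p(H^q))^*} = |X|^{1/p'}|Y|^{1/q'}$ goes through verbatim with the convention $1/\infty = 0$), or note that since $b^{(\theta,\varepsilon)}$ has all coefficients equal in modulus and disjoint supports the square function is flat, so the supremum defining the $\bmo$-norm is attained trivially and again produces exactly $|X|^{1/p'}|Y|^{1/q'}$. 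Everything else — disjointness, the value of $h_I^2$, Fubini on the nested integrals, Hölder for the upper bound — is routine.
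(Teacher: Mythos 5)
Your proof of the first identity is exactly the paper's: compute the square function (which is $\chi_X\chi_Y$ by disjointness and $h_I^2 = \chi_I$), plug into the iterated integral, Fubini.

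For the dual norm your lower bound (self-testing: $\langle b,b\rangle = |X||Y|$ divided by $\|b\|_{H^p(H^q)}$) also matches the paper. But your upper-bound plan takes a detour that the paper avoids and that, as you yourself flag, creates an unnecessary obstacle. You route through the identification $(H^p(H^q))^* = H^{p'}(H^{q'})$ and then worry, correctly, that when $p=1$ or $q=1$ the dual is a $\bmo$-type space and the first identity no longer applies as a Lebesgue-exponent computation. The paper simply never invokes this identification. Instead it estimates the dual norm directly from its definition on the finite-dimensional Haar span: for an arbitrary $h = \sum a_{K\times L}h_{K\times L}$, pair against $b^{(\theta,\varepsilon)}$ and apply H\"older twice — once in $L$ (producing the factor $|Y|^{1/q'}$ and an inner $\ell^q$-weighted sum) and then in $K$ (producing $|X|^{1/p'}$ and the $H^p(H^q)$-norm of $h$). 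This gives $\langle b^{(\theta,\varepsilon)},h\rangle \leq |X|^{1/p'}|Y|^{1/q'}\|h\|_{H^p(H^q)}$ for all $1\le p,q<\infty$ with no endpoint case distinction; the conventions $1/\infty = 0$, $\ell^\infty = \sup$ are built into H\"older. So the ``only point requiring care'' that you identify is not actually a gap in the statement — it is a gap you introduce by choosing the $H^{p'}(H^{q'})$ route. If you want your version to close cleanly you would either need to justify the dual-space identification restricted to finite Haar sums (doable but extra work) or, better, just replace that step by the two-variable H\"older estimate, which is shorter and uniform in $p,q$.
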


Lemma~\ref{lem:block:basic-estimate} follows immediately
from~\cite[Lemma~4.1]{laustsen:lechner:mueller:2015}.  Since the proof is short, we include it here
for the sake of completeness.
\begin{proof}
  \eqref{lem:block:basic-estimate:func} and the disjointness of the collections
  $\mathcal{X},\mathcal{Y}$ yields
  \begin{align*}
    \|b^{(\theta,\varepsilon)}\|_{H^p(H^q)}
    &= \biggl(
      \int_0^1 \Bigl(
      \int_0^1 \sum_{\substack{K\in\mathcal{X}\\L\in\mathcal{Y}}} h_K^2(x) h_L^2(y) \dif y
    \Bigr)^{p/q} \dif x
    \biggr)^{1/p}\\
    % &= |Y|^{1/q} \biggl(
    % \int_0^1 \Bigl( \sum_{K\in\mathcal{X}} h_K^2(x) \Bigr)^{p/q} \dif x
    % \biggr)^{1/p}
      &= |Y|^{1/q} \biggl(
        \int_0^1  \sum_{K\in\mathcal{X}} h_K^2(x) \dif x
        \biggr)^{1/p}
        = |X|^{1/p} |Y|^{1/q}.
  \end{align*}
  
  We will now compute $\|b^{(\theta,\varepsilon)}\|_{(H^p(H^q))^*}$.  To this end, let
  $h\in H^p(H^q)$ be given by $h = \sum_{K,L} a_{K\times L} h_{K\times L}\in H^p(H^q)$, and observe
  that by Hölder's inequality we obtain
  \begin{align*}
    \langle b^{(\theta,\varepsilon)}, h \rangle
    &\leq \sum_{K\in\mathcal{X}} |K| \sum_{L\in\mathcal{Y}} |a_{K\times L}| |L|
      \leq |Y|^{1/{q'}} \sum_{K\in\mathcal{X}} |K| \bigl(
      \sum_{L\in\mathcal{Y}} |a_{K\times L}|^q |L|
      \bigr)^{1/q} \\
    &\leq |X|^{1/{p'}}|Y|^{1/{q'}} \Bigl(\sum_{K\in\mathcal{X}} |K| \bigl(
      \sum_{L\in\mathcal{Y}} |a_{K\times L}|^q |L|
      \bigr)^{p/q}\Bigr)^{1/p}\\
    &= |X|^{1/{p'}}|Y|^{1/{q'}} \|h\|_{H^p(H^q)}.
  \end{align*}
  Thus, we have $\|b^{(\theta,\varepsilon)}\|_{(H^p(H^q))^*}\leq |X|^{1/{p'}}|Y|^{1/{q'}}$.  Since
  $\langle b^{(\theta,\varepsilon)}, b^{(\theta,\varepsilon)} \rangle = |X| |Y|$ and
  $\|b^{(\theta,\varepsilon)}\|_{H^p(H^q)} = |X|^{1/{p}}|Y|^{1/{q}}$ by the first part of the proof,
  we obtain
  \begin{equation*}
    \|b^{(\theta,\varepsilon)}\|_{(H^p(H^q))^*}
    = |X|^{1/{p'}} |Y|^{1/{q'}}.
    \qedhere
  \end{equation*}
\end{proof}

The following Theorem~\ref{thm:projection} is one of the two main ingredients in the proof of
Theorem~\ref{thm:results:factor}; the other one is the almost-diagonalization of operators using random
block bases (see~Theorem~\ref{thm:var}).
\begin{thm}\label{thm:projection}
  Let $\mathcal{X}_I\subset\mathcal{D}$, $I\in\mathcal{D}$ and $\mathcal{Y}_J\subset\mathcal{D}$,
  $J\in\mathcal{D}$ both satisfy condition~(\hyperref[enu:j1]{J}) with constant $\kappa = 1$, and
  define the product collections
  \begin{equation}\label{eq:thm:projection:coll-B}
    \mathcal{B}_{I\times J}
    = 
    \mathcal{X}_I\otimes\mathcal{Y}_J
    = \{K\times L\, :\, K\in \mathcal{X}_I,\, L\in \mathcal{Y}_J\},
    \qquad I,J\in \mathcal{D}.
  \end{equation}
  Given $\theta,\varepsilon \in \{\pm 1\}^{\mathcal{D}}$, we define the tensor product system
  \begin{equation}\label{eq:thm:projection:tensor:1}
    b_{I\times J}^{(\theta,\varepsilon)}
    = f_I^{(\theta)}\otimes g_J^{(\varepsilon)},
    \qquad I,J\in \mathcal{D},
  \end{equation}
  where
  \begin{equation}\label{eq:thm:projection:tensor:2}
    f_I^{(\theta)}
    = \sum_{K\in\mathcal{X}_I} \theta_K h_K,
    \quad I\in \mathcal{D}
    \qquad\text{and}\qquad
    g_J^{(\varepsilon)}
    = \sum_{L\in\mathcal{Y}_J} \varepsilon_L h_L,
    \quad J\in \mathcal{D}.
  \end{equation}
  Given $1 \leq p,q < \infty$, let $V$ denote either $H^p(H^q)$ or $(H^p(H^q))^*$.  Then the
  operators $B^{(\theta,\varepsilon)}, A^{(\theta,\varepsilon)} : V\to V$ given by
  \begin{equation}\label{eq:thm:projection:operators}
    B^{(\theta,\varepsilon)} f = \sum_{R\in \mathcal{D}\otimes\mathcal{D}} \frac{\langle f, h_R\rangle}{\|h_R\|_2^2} b_R^{(\theta,\varepsilon)}
    \qquad\text{and}\qquad
    A^{(\theta,\varepsilon)} f = \sum_{R\in \mathcal{D}\otimes\mathcal{D}} \frac{\langle f, b_R^{(\theta,\varepsilon)}\rangle}{\|b_R^{(\theta,\varepsilon)}\|_2^2} h_R
  \end{equation}
  satisfy the estimates
  \begin{equation}\label{eq:thm:projection:estimates}
    \begin{aligned}
      \|B^{(\theta,\varepsilon)} f \|_{V} & \leq \|f\|_{V},
      &f&\in V,\\
      \|A^{(\theta,\varepsilon)} f \|_{V} &\leq \|f\|_{V}, &f&\in V.
    \end{aligned}
  \end{equation}
  Moreover, the diagram
  \begin{equation}\label{eq:thm:projection:diagram}
    \vcxymatrix{V \ar[rr]^{I_{V}} \ar[rd]_{B^{(\theta,\varepsilon)}} & & V\\
      &  V  \ar[ru]_{A^{(\theta,\varepsilon)}} &
    }
  \end{equation}
  is commutative and the composition
  $P^{(\theta,\varepsilon)} = B^{(\theta,\varepsilon)} A^{(\theta,\varepsilon)}$ is the norm $1$
  projection $P^{(\theta,\varepsilon)} : V\to V$ given by
  \begin{equation}\label{eq:thm:projection:proj-formula}
    P^{(\theta,\varepsilon)}(f)
    = \sum_{R\in\mathcal{D}\otimes\mathcal{D}} \frac{\langle f, b_R^{(\theta,\varepsilon)}\rangle}{\|b_R\|_2^2}
    b_R^{(\theta,\varepsilon)}.
  \end{equation}
  Consequently, the range of $B^{(\theta,\varepsilon)}$ is complemented (by
  $P^{(\theta,\varepsilon)}$), and $B^{(\theta,\varepsilon)}$ is an isometric isomorphism onto its
  range.
\end{thm}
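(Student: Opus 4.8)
The plan is to reduce everything to a single genuinely analytic estimate and to exploit that the hypothesis $\kappa=1$ makes the whole construction essentially isometric. I would first record the orthogonality bookkeeping: since any two distinct Haar functions are orthogonal in $L^2[0,1)$, \eqref{eq:thm:projection:tensor:2} and~(J1) give $\langle f_I^{(\theta)},f_{I'}^{(\theta)}\rangle=\delta_{I,I'}|X_I|$ and $\langle g_J^{(\varepsilon)},g_{J'}^{(\varepsilon)}\rangle=\delta_{J,J'}|Y_J|$, so by~\eqref{eq:thm:projection:tensor:1}
\begin{equation*}
  \langle b_R^{(\theta,\varepsilon)},b_{R'}^{(\theta,\varepsilon)}\rangle
  =\delta_{R,R'}\,|X_I|\,|Y_J|
  =\delta_{R,R'}\,\|b_R^{(\theta,\varepsilon)}\|_2^2,
  \qquad R=I\times J,
\end{equation*}
and~(P3) with $\kappa=1$ forces $|X_I|=|I|$, $|Y_J|=|J|$, hence $\|b_R^{(\theta,\varepsilon)}\|_2=\|h_R\|_2$ for every $R\in\mathcal{D}\otimes\mathcal{D}$.

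From here the algebraic assertions follow formally. Feeding $f=h_R$ into~\eqref{eq:thm:projection:operators} gives $B^{(\theta,\varepsilon)}h_R=b_R^{(\theta,\varepsilon)}$, and feeding $f=b_R^{(\theta,\varepsilon)}$ into the formula for $A^{(\theta,\varepsilon)}$ together with the display above gives $A^{(\theta,\varepsilon)}b_R^{(\theta,\varepsilon)}=h_R$. Hence $A^{(\theta,\varepsilon)}B^{(\theta,\varepsilon)}$ fixes the Haar system, so it equals $I_V$ on $\spn\{h_R:R\in\mathcal{D}\otimes\mathcal{D}\}$ and, once~\eqref{eq:thm:projection:estimates} secures continuity, on all of $V$ --- this is the commutativity of~\eqref{eq:thm:projection:diagram}. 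Then $P^{(\theta,\varepsilon)}=B^{(\theta,\varepsilon)}A^{(\theta,\varepsilon)}$ is idempotent, since $(P^{(\theta,\varepsilon)})^2=B^{(\theta,\varepsilon)}(A^{(\theta,\varepsilon)}B^{(\theta,\varepsilon)})A^{(\theta,\varepsilon)}=P^{(\theta,\varepsilon)}$, it fixes each $b_R^{(\theta,\varepsilon)}$, hence it is the projection onto $\overline{\spn}\{b_R^{(\theta,\varepsilon)}\}=\image B^{(\theta,\varepsilon)}$, and~\eqref{eq:thm:projection:proj-formula} follows by inserting the formula for $A^{(\theta,\varepsilon)}$ into $P^{(\theta,\varepsilon)}=B^{(\theta,\varepsilon)}A^{(\theta,\varepsilon)}$ and using $B^{(\theta,\varepsilon)}h_R=b_R^{(\theta,\varepsilon)}$. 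Finally $f=A^{(\theta,\varepsilon)}B^{(\theta,\varepsilon)}f$ together with $\|A^{(\theta,\varepsilon)}\|\le1$ forces $\|f\|_V\le\|B^{(\theta,\varepsilon)}f\|_V$, so $B^{(\theta,\varepsilon)}$ is an isometry onto its (hence closed) range, while $P^{(\theta,\varepsilon)}$ is a nonzero projection with $\|P^{(\theta,\varepsilon)}\|\le\|B^{(\theta,\varepsilon)}\|\,\|A^{(\theta,\varepsilon)}\|\le1$, and so $\|P^{(\theta,\varepsilon)}\|=1$.

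It remains to prove~\eqref{eq:thm:projection:estimates}. For $B^{(\theta,\varepsilon)}$ on $V=H^p(H^q)$ I would argue by a product rearrangement: because $\kappa=1$, conditions~(J2)--(J3) (applied to the $\mathcal{X}_I$ and to the $\mathcal{Y}_J$) force $X_{I^-}\sqcup X_{I^+}=X_I$ and $Y_{J^-}\sqcup Y_{J^+}=Y_J$ up to null sets, so by recursive bisection there are measure-preserving maps $\phi,\psi\colon[0,1)\to[0,1)$ with $\phi^{-1}(I)=X_I$ and $\psi^{-1}(J)=Y_J$ for all $I,J\in\mathcal{D}$. For $f=\sum_R a_R h_R$ the identities above give $B^{(\theta,\varepsilon)}f=\sum_{I,J}a_{I\times J}\sum_{K\in\mathcal{X}_I,\,L\in\mathcal{Y}_J}\theta_K\varepsilon_L h_{K\times L}$, whose bi-parameter square function, by the disjointness in~(P1), equals $\bigl(\sum_{I,J}|a_{I\times J}|^2\,\charfun_{X_I}\otimes\charfun_{Y_J}\bigr)^{1/2}=\bigl(\sum_{I,J}|a_{I\times J}|^2\,\charfun_I\otimes\charfun_J\bigr)^{1/2}\circ(\phi\times\psi)$, i.e.\ the square function of $f$ composed with the product map $\phi\times\psi$; since the $H^p(H^q)$ norm is the $L^p(L^q)$ norm of the square function and $L^p(L^q)$ is invariant under composition with $\phi\times\psi$ (integrate in the second variable using $\psi$, then in the first using $\phi$), we get $\|B^{(\theta,\varepsilon)}f\|_{H^p(H^q)}=\|f\|_{H^p(H^q)}$. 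Thus $B^{(\theta,\varepsilon)}$ is an isometry on $H^p(H^q)$ for all $1\le p,q<\infty$. Since $\|b_R^{(\theta,\varepsilon)}\|_2=\|h_R\|_2$, the Banach-space adjoint of $B^{(\theta,\varepsilon)}$ acts, on the span of the Haar system, exactly as $A^{(\theta,\varepsilon)}$, and conversely; therefore $A^{(\theta,\varepsilon)}$ is a contraction on $(H^p(H^q))^*$ for every $1\le p,q<\infty$ and on $H^p(H^q)$ for $1<p,q<\infty$ (whose predual is $H^{p'}(H^{q'})$).

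The one remaining case, and the crux of the argument, is the contractivity of $A^{(\theta,\varepsilon)}$ on $H^p(H^q)$ when $p=1$ or $q=1$. There the Haar system is no longer unconditional and $A^{(\theta,\varepsilon)}$ genuinely averages coefficients --- its $R$-th Haar coefficient is the weighted mean of the numbers $\theta_K\varepsilon_L a_{K\times L}$, $K\in\mathcal{X}_I$, $L\in\mathcal{Y}_J$, with weights $|K||L|/(|X_I||Y_J|)$ --- so the averaging cannot be undone by a sign change. This is precisely the regime in which Jones' fourth condition~(J4), and its tensor-product counterpart~(P4), become indispensable: they are what keeps block-averaging operators bounded on $H^1$. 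Concretely, I would first prove the one-parameter statements that $h_I\mapsto f_I^{(\theta)}$ and the associated averaging maps are contractive on $H^1$ (Jones' theorem; cf.~\cite{jones:1985,mueller:2005}), the point at which~(J4) is used, and then lift these to the mixed-norm spaces $H^1(H^q)$, $H^p(H^1)$ and $H^1(H^1)$ by a vector-valued Fubini argument carried out one parameter at a time, in the spirit of~\cite{laustsen:lechner:mueller:2015}. This endpoint estimate for $A^{(\theta,\varepsilon)}$ --- together with the vector-valued bootstrap it requires --- is, I expect, the main technical obstacle; it also yields, by taking adjoints, the contractivity of $B^{(\theta,\varepsilon)}$ on $(H^p(H^q))^*$ when $p=1$ or $q=1$, which is the last missing piece of~\eqref{eq:thm:projection:estimates}. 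Everything else is the formal argument of the first two paragraphs and the rearrangement identity for $B^{(\theta,\varepsilon)}$.
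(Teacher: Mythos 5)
Your route is genuinely different from the paper's, which disposes of the entire statement in two sentences: it applies Lemma~\ref{lem:tensor-product} to verify Capon's conditions (P1)--(P4) with constant~$1$, invokes the corresponding boundedness theorem of~\cite{laustsen:lechner:mueller:2015} for the case $V=H^p(H^q)$, and then handles $V=(H^p(H^q))^*$ by the adjoint identities $(B^{(\theta,\varepsilon)})^*=A^{(\theta,\varepsilon)}$ and $(A^{(\theta,\varepsilon)})^*=B^{(\theta,\varepsilon)}$ --- the same duality observation you make. Everything in your first two paragraphs (the $L^2$ bookkeeping with $\kappa=1$ giving $\|b_R^{(\theta,\varepsilon)}\|_2=\|h_R\|_2$, the identities $B^{(\theta,\varepsilon)}h_R=b_R^{(\theta,\varepsilon)}$ and $A^{(\theta,\varepsilon)}b_R^{(\theta,\varepsilon)}=h_R$, the commutativity of \eqref{eq:thm:projection:diagram}, the formula \eqref{eq:thm:projection:proj-formula}, and the deduction of isometry from $A^{(\theta,\varepsilon)}B^{(\theta,\varepsilon)}=I_V$ plus the two norm bounds) is correct and is implicitly what the paper relies on as well. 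Your rearrangement identity for $B^{(\theta,\varepsilon)}$ on $H^p(H^q)$ is a nice, essentially self-contained observation --- using that $\kappa=1$ makes $\{X_I:I\in\mathcal D_n\}$ and $\{Y_J:J\in\mathcal D_n\}$ exact partitions at each scale, so the square functions of $f$ and $B^{(\theta,\varepsilon)}f$ are equidistributed --- and it upgrades the desired bound to an equality; the paper does not include this because the cited theorem already supplies the contraction.

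Where the argument stops being a proof is exactly where you flag it: the contractivity of $A^{(\theta,\varepsilon)}$ on $H^p(H^q)$ itself, above all at the endpoints $p=1$ or $q=1$, is the analytic substance of the theorem, and it is not obtainable from the rearrangement identity (which fails for the averaging operator $A^{(\theta,\varepsilon)}$) nor from the duality with $B^{(\theta,\varepsilon)}$. You list the right ingredients --- Jones' condition (J4), a one-parameter Jones-type contraction on $H^1$, and a vector-valued Fubini/bootstrap to pass to $H^1(H^q)$, $H^p(H^1)$, $H^1(H^1)$ --- but this is precisely the content of~\cite{laustsen:lechner:mueller:2015}, which the paper cites rather than reproves; so this piece remains an outsourced step in your proposal as well. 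A smaller caveat: the parenthetical deduction that $A^{(\theta,\varepsilon)}$ is a contraction on $H^p(H^q)$ for $1<p,q<\infty$ ``whose predual is $H^{p'}(H^{q'})$'' tacitly identifies $H^p(H^q)$ with $(H^{p'}(H^{q'}))^*$ \emph{isometrically}, which does not hold in general (the square-function norm is not a dual norm). The direct endpoint argument, once carried out, applies for all $1\leq p,q<\infty$ and avoids this issue, so it is the cleaner way to close the proof.
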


\begin{proof}
  The case $V = H^p(H^q)$ follows immediately from Lemma~\ref{lem:tensor-product}
  and~\cite{laustsen:lechner:mueller:2015}.

  If $V = (H^p(H^q))^*$ the theorem follows from the case $V = H^p(H^q)$ and the observation that
  $(B^{(\theta,\varepsilon)})^* = A^{(\theta,\varepsilon)}$ and
  $(A^{(\theta,\varepsilon)})^* = B^{(\theta,\varepsilon)}$.
\end{proof}

\subsection{Random block bases with tensor product structure}\label{sec:rand-tens-prod}

\noindent
Let $\prob_\theta$ denote the uniform measure on $\Omega_\theta = \{\pm 1\}^{\mathcal{D}}$, and let
$(\Omega_\varepsilon,\prob_\varepsilon)$ denote an independent copy of
$(\Omega_\theta,\prob_\theta)$.  $\prob_{\theta,\varepsilon}$ is the product measure on
$\Omega_\theta\times\Omega_\varepsilon$.  Moreover, $\cond_\theta$, $\cond_\varepsilon$ and
$\cond_{\theta,\varepsilon}$ are the expectations with respect to the probability measures
$\prob_\theta$, $\prob_\varepsilon$ and $\prob_{\theta,\varepsilon}$, respectively.

Given $n,N\in\mathbb{N}$, $I,J\in\mathcal{D}_{\leq n}$ and
$\mathcal{X}_I, \mathcal{Y}_J\subset\mathcal{D}_{\leq N}$, define the functions
\begin{equation}\label{eq:fg:dfn}
  f_I^{(\theta)} = \sum_{K\in\mathcal{X}_I} \theta_K h_K,
  \quad \theta\in\Omega_\theta
  \qquad\text{and}\qquad
  g_J^{(\varepsilon)} = \sum_{L\in\mathcal{Y}_J}\varepsilon_L h_L,
  \quad \varepsilon\in\Omega_\varepsilon.
\end{equation}
Hence, their tensor product $b_{I\times J}^{(\theta,\varepsilon)}$ is given by
\begin{equation}\label{eq:b:dfn}
  b_{I\times J}^{(\theta,\varepsilon)}
  = f_I^{(\theta)}\otimes g_J^{(\varepsilon)}
  = \sum_{\substack{K\in\mathcal{X}_I\\L\in\mathcal{Y}_J}} \theta_K \varepsilon_L h_{K\times L},
  \qquad (\theta,\varepsilon)\in\Omega_\theta\times\Omega_\varepsilon.
\end{equation}

Let $1 \leq p,q < \infty$ and let $V_N$ denote either $H_N^p(H_N^q)$ or
$(H_N^p(H_N^q))^*$.  Given a bounded linear operator $T : V_N\to V_N$, we put
\begin{subequations}\label{eq:dfn:rv}
  \begin{align}
    W_{I,I',J,J'}(\theta,\varepsilon)
    & = \langle T b_{I\times J}^{(\theta,\varepsilon)},
      b_{I'\times J'}^{(\theta,\varepsilon)}\rangle,
      \qquad I, J, I', J'\in\mathcal{D}_{\leq n},\ I\neq I',\ J\neq J',
      \label{eq:dfn:rv:W}\\
    X_{I,I',J}(\theta,\varepsilon)
    & = \langle T b_{I\times J}^{(\theta,\varepsilon)},
      b_{I'\times J'}^{(\theta,\varepsilon)}\rangle,
      \qquad I, J, I'\in\mathcal{D}_{\leq n},\ I\neq I',
      \label{eq:dfn:rv:X}\\
    Y_{I,J,J'}(\theta,\varepsilon)
    & = \langle T b_{I\times J}^{(\theta,\varepsilon)},
      b_{I'\times J'}^{(\theta,\varepsilon)}\rangle,
      \qquad I, J, J'\in\mathcal{D}_{\leq n},\ J\neq J',
      \label{eq:dfn:rv:Y}\\
    Z_{I,J}(\theta,\varepsilon)
    & = \langle T b_{I\times J}^{(\theta,\varepsilon)},
      b_{I\times J}^{(\theta,\varepsilon)}\rangle -
      \sum_{\substack{K\in\mathcal{X}_I\\L\in\mathcal{Y}_J}}
    \langle Th_{K\times L}, h_{K\times L}\rangle,
    \quad I, J\in\mathcal{D}_{\leq n},
    \label{eq:dfn:rv:Z}
  \end{align}
  for all $(\theta,\varepsilon)\in\Omega_\theta\times\Omega_\varepsilon$.
\end{subequations}
From here on, we will regularly omit the subindices of the above random variables,
i.e.~$W=W_{I,I',J,J'}$, $X=X_{I,I',J}$, $Y=Y_{I,J,J'}$ and $Z = Z_{I,J}$.

\begin{thm}\label{thm:var}
  Given $n,N\in\mathbb{N}$, let $\mathcal{X}_I\subset\mathcal{D}_{\leq N}$, $I\in\mathcal{D}_{\leq n}$ and
  $\mathcal{Y}_J\subset\mathcal{D}_{\leq N}$, $J\in\mathcal{D}_{\leq n}$ both denote non-empty collections which
  satisfy~\textrefp[J]{enu:j1}.  Define $\alpha > 0$ by putting
  \begin{equation}\label{eq:alpha-small}
    \alpha = \max \{ |K|, |L| : K\in\mathcal{X}_I,\ L\in\mathcal{Y}_J,\ I,J\in\mathcal{D}_{\leq n}\}.
  \end{equation}
  Given $1 \leq p,q < \infty$, let $V_N$ denote either $H_N^p(H_N^q)$ or $(H_N^p(H_N^q))^*$.  Then
  for any bounded operator $T : V_N\to V_N$ we have
  \begin{equation}\label{eq:lem:var:exp}
    \cond_{\theta,\varepsilon} W
    = \cond_{\theta,\varepsilon} X
    = \cond_{\theta,\varepsilon} Y
    = \cond_{\theta,\varepsilon} Z
    = 0,
  \end{equation}
  as well as the estimates
  \begin{subequations}\label{eq:lem:var:var}
    \begin{align}
      \cond_{\theta,\varepsilon} W^2 & \leq \|T\|^2 \alpha^{1/2},
      & \cond_{\theta,\varepsilon} X^2 & \leq 4 \|T\|^2 \alpha^{1/2},\\
      \cond_{\theta,\varepsilon} Y^2 & \leq 4 \|T\|^2 \alpha^{1/2},
      & \cond_{\theta,\varepsilon} Z^2 & \leq 12 \|T\|^2 \alpha^{1/2},
    \end{align}
  \end{subequations}
  where the random variables $W,X,Y,Z$ are defined in~\eqref{eq:dfn:rv}.
\end{thm}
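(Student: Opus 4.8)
The plan splits into the vanishing of the expectations, which is elementary, and the variance bounds, which need a genuine two‑part argument. For the expectations, expand $b_{I\times J}^{(\theta,\varepsilon)}=\sum_{K\in\mathcal X_I}\sum_{L\in\mathcal Y_J}\theta_K\varepsilon_L h_{K\times L}$ and substitute into~\eqref{eq:dfn:rv}, so that each of $W$, $X$, $Y$ becomes a finite linear combination of Rademacher monomials $\theta_K\theta_{K'}\varepsilon_L\varepsilon_{L'}$ with scalar coefficients $\langle Th_{K\times L},h_{K'\times L'}\rangle$. By~\textrefp[J]{enu:j1}, $I\neq I'$ forces $\mathcal X_I\cap\mathcal X_{I'}=\emptyset$, hence $K\neq K'$, and $J\neq J'$ forces $L\neq L'$; therefore in every monomial occurring in $W$ — and, because of the single strict inequality, in $X$ and in $Y$ — some Rademacher variable appears to the first power, so $\cond_{\theta,\varepsilon}$ annihilates it. For $Z$ the only monomials with all exponents even are those with $K=K'$ and $L=L'$, whose total contribution is precisely $\sum_{K\in\mathcal X_I,L\in\mathcal Y_J}\langle Th_{K\times L},h_{K\times L}\rangle$, i.e.\ the term subtracted in~\eqref{eq:dfn:rv:Z}. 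This gives~\eqref{eq:lem:var:exp}.

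Squaring, expanding, and using $\cond\theta_K=0$, $\cond\theta_K^2=1$ (and the same for $\varepsilon$) together with the disjointness from~\textrefp[J]{enu:j1}, one reduces $\cond_{\theta,\varepsilon}W^2$ to the diagonal sum $\sum_{K\in\mathcal X_I}\sum_{L\in\mathcal Y_J}\sum_{K'\in\mathcal X_{I'}}\sum_{L'\in\mathcal Y_{J'}}|\langle Th_{K\times L},h_{K'\times L'}\rangle|^2$, and likewise for $\cond X^2$, $\cond Y^2$ (the factor $4$ comes from the at most two admissible pairings of the four Rademacher indices in each coordinate). For $Z$ one decomposes $Z=Z_a+Z_b+Z_c$ according as $(K_1=K_2,\,L_1\neq L_2)$, $(K_1\neq K_2,\,L_1=L_2)$, $(K_1\neq K_2,\,L_1\neq L_2)$; the three cross‑expectations vanish by the same parity argument, and each $\cond Z_\bullet^2$ reduces to a diagonal sum with combinatorial weights summing to $12$. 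Thus everything comes down to the bound
\[
\Sigma(\mathcal A,\mathcal A'):=\sum_{R\in\mathcal A}\sum_{R'\in\mathcal A'}\bigl|\langle Th_R,h_{R'}\rangle\bigr|^2\ \le\ \|T\|^2\,\alpha^{1/2},\qquad \mathcal A=\mathcal X_I\otimes\mathcal Y_J,\ \ \mathcal A'=\mathcal X_{I'}\otimes\mathcal Y_{J'}.
\]

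Two mechanisms feed into this. (i) For any family $\mathcal C$ of pairwise disjoint dyadic rectangles and any $\rho\in\{\pm1\}^{\mathcal C}$ the square function of $\sum_{R\in\mathcal C}\rho_R h_R$ equals $\chi_{\bigcup\mathcal C}$, so by (the proof of) Lemma~\ref{lem:block:basic-estimate} a randomized block over a product family $\mathcal U\otimes\mathcal W$ has $H^p(H^q)$‑norm $|\bigcup\mathcal U|^{1/p}|\bigcup\mathcal W|^{1/q}$ and dual norm $|\bigcup\mathcal U|^{1/p'}|\bigcup\mathcal W|^{1/q'}$, independently of $\rho$. Since distinct Haar indices carry independent signs, $\Sigma(\mathcal A,\mathcal A')$ can, after ``blocking'' any chosen subset of the four index‑sums (replace the corresponding Haar function by a randomized block in that one variable and take the expectation of the square), be written as an average of $|\langle T(\mathrm{input}),\mathrm{output}\rangle|^2$; estimating this by $\|T\|$ times the product of the two explicit norms, carrying out the un‑blocked sums, and using $\sum_{K\in\mathcal X_I}|K|\le1$ etc., one obtains $\Sigma(\mathcal A,\mathcal A')\le\|T\|^2\,\min\!\bigl(\sum_{K\in\mathcal X_I}|K|^{2/p},\sum_{K'\in\mathcal X_{I'}}|K'|^{2/p'}\bigr)\cdot\min\!\bigl(\sum_{L\in\mathcal Y_J}|L|^{2/q},\sum_{L'\in\mathcal Y_{J'}}|L'|^{2/q'}\bigr)$, in which any inner sum with exponent $\ge1$ is $\le\alpha^{2/p-1}$ by disjointness and $|K|\le\alpha$. (ii) Haar projections onto subsets of rectangles are contractions on $H^p(H^q)$ and (being $L^2$‑self‑adjoint) on its dual; blocking only the two output sums into $G=\sum_{R'\in\mathcal A'}\tau_{R'}h_{R'}$ gives $\Sigma(\mathcal A,\mathcal A')=\cond_\tau\sum_{R\in\mathcal A}|\langle h_R,T^{*}G\rangle|^2$, and writing $T^{*}G=\sum_Q\psi_Q h_Q$, Bessel's inequality and $\max_{R\in\mathcal A}|R|\le\alpha^2$ give $\sum_{R\in\mathcal A}|\langle h_R,T^{*}G\rangle|^2=\sum_{R\in\mathcal A}\psi_R^2|R|^2\le\alpha^2\|P_{\mathcal A}T^{*}G\|_{L^2}^2$; a power‑mean comparison shows that a function supported on a product family of rectangles has $L^2$‑norm at most its $(H^p(H^q))^*$‑norm when $p,q\le2$, and combined with $\|P_{\mathcal A}T^{*}G\|_{(H^p(H^q))^*}\le\|T\|\,|\bigcup\mathcal X_{I'}|^{1/p'}|\bigcup\mathcal Y_{J'}|^{1/q'}\le\|T\|$ this yields $\Sigma(\mathcal A,\mathcal A')\le\alpha^2\|T\|^2$ for $p,q\le2$; the symmetry $\langle Th_R,h_{R'}\rangle=\langle T^{*}h_{R'},h_R\rangle$, passing to $T^{*}$ on $(H^p(H^q))^*$, gives the same for $p,q\ge2$. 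In both regimes $\alpha^2\le\alpha^{1/2}$, and feeding this back through the reduction with the constants $1$, $4$, $12$ yields~\eqref{eq:lem:var:var}.

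The main obstacle is the remaining mixed regime $p<2<q$ (and symmetrically $p>2>q$): the pure block estimate then gives only $\Sigma(\mathcal A,\mathcal A')\le\|T\|^2\alpha^{2/p-2/q}$ (via $\sum_{K\in\mathcal X_I}|K|^{2/p}\le\alpha^{2/p-1}$ and $\sum_{L'\in\mathcal Y_{J'}}|L'|^{2/q'}\le\alpha^{1-2/q}$), while the pure Bessel estimate degenerates because the $L^2$ versus $(H^p(H^q))^*$ comparison for product‑supported functions fails in the $y$‑direction; both mechanisms lose their gain exactly as $p,q\to2$, but in complementary directions. The work is to interpolate the two mechanisms coordinate by coordinate — using the block bound in the ``bad'' direction and the Bessel estimate (equivalently, a complex‑interpolation argument reaching the $L^2$‑midpoint of $H^p(H^q)$ and its dual, where the square‑function structure is orthogonal) in the ``good'' one — so that the resulting exponent of $\alpha$ is always at least $1/2$, and to keep the numerical constants small enough that the final bounds $\|T\|^2\alpha^{1/2}$, $4\|T\|^2\alpha^{1/2}$, $12\|T\|^2\alpha^{1/2}$ go through.
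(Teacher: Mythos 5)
Your treatment of the vanishing expectations is correct and matches the paper's argument: expand the random block into Rademacher monomials, use~\textrefp[J]{enu:j1} to force distinct indices, and observe that $Z$ exactly subtracts off the surviving even monomials. For the variances, however, your reduction glosses over structure that the paper handles explicitly, and — more importantly — the argument stops short of a proof at exactly the point you yourself flag.

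On the reduction: the paper does not collapse $\cond X^2$, $\cond Y^2$, $\cond Z^2$ to a single diagonal sum $\Sigma(\mathcal A,\mathcal A')$. Each is decomposed into its full list of admissible index groups ((a1)--(a4), (b1)--(b4), (d1)--(f9)), and several of these groups — e.g.\ $K_0=K_1'\neq K_0'=K_1$ together with $L_0=L_1'\neq L_0'=L_1$ — are genuine cross sums $\sum \langle Th_{K_0\times L_0},h_{K_0'\times L_0'}\rangle\langle Th_{K_0'\times L_0'},h_{K_0\times L_0}\rangle$, not sums of squares. The constants $1$, $4$, $12$ are precisely the number of admissible groups for each random variable, and each group is estimated on its own; asserting ``combinatorial weights summing to $12$'' without tracking the distinct group structures is too coarse.

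On the method: the paper uses exactly one mechanism, namely duality $\langle Tf,g\rangle\le\|T\|\,\|f\|_{H^p(H^q)}\|g\|_{(H^p(H^q))^*}$, the a priori coefficient bound $|\langle Th_{K\times L},h_{K'\times L'}\rangle|\le \|T\|\,|K|^{1/p}|L|^{1/q}|K'|^{1/p'}|L'|^{1/q'}$, and the exact norm formulas of Lemma~\ref{lem:block:basic-estimate} for randomized blocks. Every group is handled in a ``left variant'' (sum placed in the first argument of the bilinear form) and a ``right variant'' (sum placed in the second argument), and the two resulting exponent bounds are combined. Your mechanism~(ii) — Bessel's inequality on a partial Haar projection followed by an $L^2$ versus $(H^p(H^q))^*$ comparison — does not appear in the paper's proof at all.

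The genuine gap in your proposal is the mixed regime $p<2<q$ (and $q<2<p$). You correctly observe that your block estimate alone only yields a useful power of $\alpha$ when $p,q$ are on the same side of $2$, and that the $L^2$/Bessel comparison similarly degenerates when the two coordinate directions point opposite ways. But ``interpolate the two mechanisms coordinate by coordinate'' and ``a complex-interpolation argument reaching the $L^2$-midpoint'' are plans, not proofs: you have not written down the interpolating estimate, specified which coordinate gets which mechanism, or verified that the resulting exponent of $\alpha$ is at least $1/2$ with constants $1$, $4$, $12$ intact. This is the precise point where the bi-parameter situation is harder than the one-parameter result of~\cite{lechner:2018:1-d} — in one parameter the left/right dichotomy automatically covers all $p$, whereas here the two exponents $p$ and $q$ can straddle $2$ in opposite directions — and it is the part of the proof that actually has to be carried out. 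As it stands, the proposal identifies the difficulty but does not resolve it.
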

The proof is given in Section~\ref{sec:lem-proof}.

%%% Local Variables:
%%% mode: latex
%%% TeX-master: "main"
%%% End:

\section{Proof of the main result Theorem~\ref{thm:results:factor}}\label{sec:factor}

\noindent
Here we prove our main result Theorem~\ref{thm:results:factor}, by extending the probabilistic method
introduced in~\cite[Theorem~3.1]{lechner:2018:1-d} for one-parameter Hardy spaces $H^p$, to the
bi-parameter Hardy spaces $H_N^p(H_N^q)$.  The proof heavily relies on the results of
Section~\ref{sec:tensor-products}.

For convenience of the reader we repeat Theorem~\ref{thm:results:factor} here.
\begin{thm*}[Main result Theorem~\ref{thm:results:factor}]\label{thm:factor}
  Let $1 \leq p,q < \infty$, and let $(V_k : k\in\mathbb{N}_0)$ denote either
  \begin{equation}\label{eq:thm:factor:spaces}
    (H_k^p(H_k^q) : k\in\mathbb{N}_0)
    \qquad\text{or}\qquad
    \bigl((H_k^p(H_k^q))^* : k\in\mathbb{N}_0\bigr).
  \end{equation}
  Let $n \in \mathbb{N}_0$ and $\delta,\Gamma,\eta > 0$.  Define the integer
  $N=N(n,\delta,\Gamma,\eta)$ by the formula
  \begin{equation}\label{eq:thm:factor:dim}
    N
    = 41(n+3) + \bigl\lfloor
    4\log_2(\Gamma/\delta) + 4\log_2\bigl(1+\eta^{-1}\bigr)
    \bigr\rfloor
    .
  \end{equation}
  Then for any operator $T : V_N\rightarrow V_N$ satisfying
  \begin{equation}\label{eq:thm:factor:large}
    \|T\|
    \leq \Gamma
    \qquad\text{and}\qquad
    |\langle T h_Q, h_Q \rangle|
    \geq \delta |Q|,
    \quad Q\in \mathcal{D}_{\leq N}\otimes \mathcal{D}_{\leq N},
  \end{equation}
  there exist bounded linear operators $E : V_n\to V_N$ and $F : V_N\to V_n$, such that the diagram
  \begin{equation}\label{eq:thm:factor:diag}
    \vcxymatrix{V_n \ar[rr]^{\Id_{V_n}} \ar[d]_E && V_n\\
      V_N \ar[rr]_T && V_N \ar[u]_F}
    \qquad \|E\|\|F\| \leq \frac{1+\eta}{\delta}
  \end{equation}
  is commutative.
\end{thm*}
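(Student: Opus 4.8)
The plan is to follow the probabilistic scheme of \cite{lechner:2018:1-d}, now built on the two bi-parameter tools assembled in Section~\ref{sec:tensor-products}: the projection/embedding Theorem~\ref{thm:projection} and the variance estimates of Theorem~\ref{thm:var}. The goal is to produce, inside $\mathcal{D}_{\leq N}\otimes\mathcal{D}_{\leq N}$, a random family of dyadic rectangles $\mathcal{B}_{I\times J}=\mathcal{X}_I\otimes\mathcal{Y}_J$ with true product structure whose index collections $\mathcal{X}_I$, $\mathcal{Y}_J$ satisfy Jones' condition~\textrefp[J]{enu:j1} with constant $\kappa=1$, and such that, with positive probability over $(\theta,\varepsilon)$, the operator $T$ restricted to the random block basis $b_{I\times J}^{(\theta,\varepsilon)}$ is $\frac{\eta\delta}{1+\eta}$-close (in the appropriate entrywise sense) to the multiplication operator $h_Q\mapsto \langle Th_Q,h_Q\rangle h_Q/\|h_Q\|_2^2$ on the spanned subspace. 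Combining the near-diagonalization with a Neumann-series / small-perturbation argument then yields the factorization of $\Id_{V_n}$ through $T$ with the required norm control $\|E\|\|F\|\leq (1+\eta)/\delta$.

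**Construction of the block collections.** First I would fix the "Jones tree": for each $I\in\mathcal{D}_{\leq n}$ choose $\mathcal{X}_I\subset\mathcal{D}_{\leq N}$, and similarly $\mathcal{Y}_J$ for $J\in\mathcal{D}_{\leq n}$, so that the nesting and disjointness requirements of \textrefp[J]{enu:j1} hold with $\kappa=1$; the dyadic frequency budget needed for one one-parameter copy is linear in $n$ (this is exactly the content of the one-parameter construction in \cite{lechner:2018:1-d}), and since we take a genuine tensor product the two copies can share the same scale budget, so the worst-case sidelength $\alpha$ in \eqref{eq:alpha-small} can be made as small as $\alpha\leq 2^{-c_0 n - c_1\log_2(\Gamma/\delta)-c_1\log_2(1+\eta^{-1})}$ with the explicit constants accounted for in \eqref{eq:thm:factor:dim}. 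By Lemma~\ref{lem:tensor-product} these product collections automatically satisfy Capon's conditions~\textrefp[P]{enu:p1}--\textrefp[P]{enu:p4} with constants $1$, and by Theorem~\ref{thm:projection} the associated $B^{(\theta,\varepsilon)}$ is an isometric embedding of $V_n$ into $V_N$ with norm-one complementation via $A^{(\theta,\varepsilon)}$, for every choice of signs.

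**The probabilistic step.** Next, with the scale parameter $\alpha$ from \eqref{eq:alpha-small} in hand, I would apply Theorem~\ref{thm:var}: the four families of random variables $W,X,Y,Z$ (controlling, respectively, the fully off-diagonal entries $I\ne I', J\ne J'$, the partially off-diagonal entries, and the deviation of the on-diagonal entry of $Tb_{I\times J}^{(\theta,\varepsilon)}$ from $\sum_{K,L}\langle Th_{K\times L},h_{K\times L}\rangle$) are mean zero with second moments $\lesssim \|T\|^2\alpha^{1/2}\leq \Gamma^2\alpha^{1/2}$. A Chebyshev / union bound over the at most $(\#\mathcal{D}_{\leq n}\otimes\mathcal{D}_{\leq n})^2 \leq d_n^4$ index quadruples shows that, provided $\alpha^{1/4}$ beats $d_n^{2}\cdot\frac{1+\eta}{\eta\delta}\cdot\Gamma$ by a fixed numerical factor, there is a sign pair $(\theta,\varepsilon)$ for which simultaneously every $|W|,|X|,|Y|$ and every $|Z|$ is $\leq \tfrac{\eta\delta}{6(1+\eta)}\,|Q|$-small relative to the relevant normalizations. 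Taking logarithms, the requirement $\alpha \leq 2^{-41(n+3)-\lfloor 4\log_2(\Gamma/\delta)+4\log_2(1+\eta^{-1})\rfloor}$ is precisely what \eqref{eq:thm:factor:dim} encodes, so $N$ as defined suffices.

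**From near-diagonal to exact factorization.** Finally, fix such signs. On $W:=\operatorname{span}\{b_Q^{(\theta,\varepsilon)}: Q\in\mathcal{D}_{\leq n}\otimes\mathcal{D}_{\leq n}\}$ write $P^{(\theta,\varepsilon)}TP^{(\theta,\varepsilon)}=D+R$, where $D$ is the diagonal part $b_Q\mapsto \langle Th_{K\times L},h_{K\times L}\rangle$-sums (which, by hypothesis \eqref{eq:thm:factor:large} and a telescoping over $\mathcal{X}_I\times\mathcal{Y}_J$, acts as an invertible multiplier with $\|D^{-1}\|\leq 1/\delta$ on the normalized basis) and $R$ collects the $W,X,Y,Z$ contributions, so $\|R\|\leq \tfrac{\eta\delta}{1+\eta}$ after transporting through the isometries $A^{(\theta,\varepsilon)},B^{(\theta,\varepsilon)}$ — here one uses that the basis $b_Q^{(\theta,\varepsilon)}$ of $V_n$ is $1$-unconditional in $V_N$ so that entrywise smallness upgrades to operator-norm smallness, exactly the step carried out in \cite[§3]{lechner:2018:1-d}. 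Then $D^{-1}(D+R)=\Id+D^{-1}R$ is invertible by Neumann series, $\|(\Id+D^{-1}R)^{-1}\|\leq 1+\eta$ (after absorbing constants into the choice of smallness), and setting $E=B^{(\theta,\varepsilon)}$ and $F=(\Id+D^{-1}R)^{-1}D^{-1}A^{(\theta,\varepsilon)}$ gives $FTE=\Id_{V_n}$ with $\|E\|\|F\|\leq 1\cdot\frac{1+\eta}{\delta}$, as required.

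**Main obstacle.** The delicate point is the passage from the four scalar variance bounds of Theorem~\ref{thm:var} to a genuine operator-norm estimate $\|R\|\leq \tfrac{\eta\delta}{1+\eta}$: one must bound the operator norm of a matrix on the mixed-norm space $H_n^p(H_n^q)$ (or its dual) by its entries, which only works because of $1$-unconditionality of the Haar system in both parameters and because the block basis is itself $1$-unconditional and isometrically complemented (Theorem~\ref{thm:projection}); controlling the number of "bad events" in the union bound by a power of $d_n$ — rather than by something exponential in $d_n$ — is what makes $N$ linear in $n$, and getting the book-keeping of the numerical constants $41$, $4$ to line up with \eqref{eq:thm:factor:dim} is the technical heart of the argument.
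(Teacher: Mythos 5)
Your proposal matches the paper's proof in every structural element: a minimalist Gamlen--Gaudet construction of the collections $\mathcal{X}_I,\mathcal{Y}_J$ satisfying~\textrefp[J]{enu:j1} with $\kappa=1$, a Chebyshev-plus-union-bound application of Theorem~\ref{thm:var} to find signs $(\theta,\varepsilon)$ with the entrywise almost-diagonal estimates, the use of $B^{(\theta,\varepsilon)}$, $A^{(\theta,\varepsilon)}$, $P^{(\theta,\varepsilon)}$ from Theorem~\ref{thm:projection}, and a Neumann-series inversion on $Y=P^{(\theta,\varepsilon)}(V_N)$ — with your $D^{-1}(D+R)$ decomposition equivalent to the paper's operator $U^{(\theta,\varepsilon)}$ and $(U^{(\theta,\varepsilon)}TI)^{-1}$. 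One caveat: your intermediate sufficient condition (that $\alpha^{1/4}$ beat $d_n^{2}\,\Gamma(1+\eta)/(\eta\delta)$) understates what is actually needed, since the off-diagonal threshold $\eta_0$ must be taken of order $\eta\delta/\bigl((1+\eta)2^{8(n+2)}\bigr)$ — the extra $2^{8(n+2)}$ arising from the entrywise-to-operator-norm step via the triangle inequality over $\sim 2^{4n}$ pairs and the $\|b_R\|_2^{-2}\asymp 2^{2n}$ normalization — which is precisely the source of the coefficient $41$ in~\eqref{eq:thm:factor:dim}, a point you correctly flag as the technical heart but do not resolve.
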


\begin{myproof}
  Let $M : V_N\to V_N$ denote the norm $1$ multiplication operator given by the linear extension of
  \begin{equation*}
    h_Q\mapsto \sign(\langle T h_Q, h_Q\rangle) h_Q,
    \qquad Q\in\mathcal{D}_{\leq N}\otimes\mathcal{D}_{\leq N}.
  \end{equation*}
  By~\eqref{eq:thm:factor:large}, we obtain
  \begin{equation*}
    \langle T M h_Q, h_Q \rangle
    = |\langle T h_Q, h_Q \rangle|
    \geq \delta |Q|,
    \qquad Q\in\mathcal{D}_{\leq N}\otimes\mathcal{D}_{\leq N},
  \end{equation*}
  and therefore we can assume
  \begin{equation}
    \label{eq:proof:thm:factor:large}
    \langle T h_Q, h_Q \rangle\geq \delta|Q|,
    \qquad Q\in\mathcal{D}_{\leq N}\otimes\mathcal{D}_{\leq N}.
  \end{equation}

  Before we proceed to \textref[Step~]{step:proof:thm:factor:1} of the proof, we define the
  constants $m_0$ and $\eta_0$: Let $m_0\in\mathbb{N}_0$ denote the smallest integer such that
  \begin{equation}\label{proof:thm:factor:const:1}
    2^{m_0} > \frac{2^{8(n+3)}\Gamma^4}{\eta_0^4},
    \qquad\text{where}\quad
    \eta_0 = \frac{\eta\delta}{(1+\eta)2^{8(n+2)}}.
  \end{equation}
  
  \begin{proofstep}[Step~\theproofstep: constructing the block basis $b_R^{(\theta,\varepsilon)}$,
    $R\in\mathcal{D}_{\leq n}\otimes\mathcal{D}_{\leq n}$]\label{step:proof:thm:factor:1}
    In this step, we will define a random block basis
    $(\theta,\varepsilon)\mapsto b_R^{(\theta,\varepsilon)}$,
    $R\in\mathcal{D}_{\leq n}\otimes\mathcal{D}_{\leq n}$ of the Haar system $h_Q$,
    $Q\in\mathcal{D}_{\leq N}\otimes\mathcal{D}_{\leq N}$ given by
    \begin{equation}\label{proof:thm:factor:overview:0}
      b_{I\times J}^{(\theta,\varepsilon)}
      = f_I^{(\theta)}\otimes g_J^{(\varepsilon)}
      = \sum_{K\in\mathcal{X}_I} \theta_K h_K
      \otimes \sum_{L\in\mathcal{Y}_J} \varepsilon_L h_L,
      \qquad \theta\in\Omega_\theta,\ \varepsilon\in\Omega_\varepsilon,
    \end{equation}
    where $\mathcal{X}_I\subset\mathcal{D}_{\leq N}$, $I\in\mathcal{D}_{\leq n}$ and
    $\mathcal{Y}_J\subset\mathcal{D}_{\leq N}$, $J\in\mathcal{D}_{\leq n}$ both satisfy
    condition~(\hyperref[enu:j1]{J}) with constant $\kappa=1$.  The collections will be selected by
    a minimalist Gamlen-Gaudet construction.  Then, using Theorem~\ref{thm:var}, we will find signs
    $(\theta,\varepsilon) \in \Omega_\theta\times\Omega_\varepsilon$ such that
    \begin{subequations}\label{proof:thm:factor:overview:1}
      \begin{align}
        |\langle T b_R^{(\theta,\varepsilon)},
        b_{R'}^{(\theta,\varepsilon)}\rangle|
        & \leq \eta_0,
        & R,R'\in\mathcal{D}_{\leq n}\otimes\mathcal{D}_{\leq n},\ R\neq R',&
                                                                              \label{proof:thm:factor:overview:1:a}\\
        \langle T b_R^{(\theta,\varepsilon)},
        b_R^{(\theta,\varepsilon)}\rangle
        & \geq (\delta - \eta_0 2^{2n}) \|b_R^{(\theta,\varepsilon)}\|_2^2,
        & R\in\mathcal{D}_{\leq n}\otimes\mathcal{D}_{\leq n}.&
                                                                \label{proof:thm:factor:overview:1:b}
      \end{align}
    \end{subequations}

    We will now inductively define the collections $\mathcal{X}_I$, $I\in\mathcal{D}_{\leq n}$ and
    $\mathcal{Y}_J$, $J\in\mathcal{D}_{\leq n}$.  We begin by putting,
    \begin{equation}\label{eq:proof:thm:factor:coll:1}
      \mathcal{X}_{[0,1)}
      = \mathcal{Y}_{[0,1)}
      = \mathcal{D}_{m_0}.
    \end{equation}
    Now, let $0\leq k \leq n-1$, assume that we have already constructed the collections
    $\mathcal{X}_I$, $I\in\mathcal{D}_{\leq k}$ and $\mathcal{Y}_J$, $J\in\mathcal{D}_{\leq k}$.
    Then we define
    \begin{subequations}\label{eq:proof:thm:factor:coll:2}
      \begin{align}
        \mathcal{X}_{I^+}
        &= \{ K^+ : K\in\mathcal{X}_I\},
        &\mathcal{X}_{I^-}
        &= \{ K^- : K\in\mathcal{X}_I\},
        &&I\in\mathcal{D}_k,
           \label{eq:proof:thm:factor:coll:2:a}\\
        \mathcal{Y}_{J^+}
        &= \{ K^+ : K\in\mathcal{Y}_J\},
        &\mathcal{Y}_{J^-}
        &= \{ K^- : K\in\mathcal{Y}_J\},
        &&J\in\mathcal{D}_k.
           \label{eq:proof:thm:factor:coll:2:a}
      \end{align}
    \end{subequations}
    Clearly, $\mathcal{X}_I$, $I\in\mathcal{D}_{\leq n}$ and $\mathcal{Y}_J$,
    $J\in\mathcal{D}_{\leq n}$ both satisfy condition~(\hyperref[enu:j1]{J}) with constant
    $\kappa = 1$.

    Next, we will use the probabilistic Theorem~\ref{thm:var} to find signs
    $(\theta,\varepsilon)\in\Omega_\theta\times\Omega_\varepsilon$ such
    that~\eqref{proof:thm:factor:overview:1} is satisfied.  To this end, we define the off-diagonal
    events
    \begin{equation*}%\label{eq:dfn:rv:tail:2:a}
      O_{R,R'} = \bigl\{ (\theta,\varepsilon) :
      |\langle T b_R^{(\theta,\varepsilon)}, b_{R'}^{(\theta,\varepsilon)}\rangle|
      > \eta_0
      \bigr\},
      \qquad R,R'\in\mathcal{D}_{\leq n}\otimes\mathcal{D}_{\leq n},\ R\neq R'
    \end{equation*}
    and the diagonal events
    \begin{equation*}%\label{eq:dfn:rv:tail:2:b}
      D_{I,J} = \biggl\{ (\theta,\varepsilon) :
      \Bigl|\langle T b_{I\times J}^{(\theta,\varepsilon)},
      b_{I\times J}^{(\theta,\varepsilon)}\rangle -
      \sum_{\substack{K\in\mathcal{X}_I\\L\in\mathcal{Y}_J}}
      \langle Th_{K\times L}, h_{K\times L}\rangle
      \Bigr|
      > \eta_0
      \biggr\},
      \qquad I,J\in\mathcal{D}_{\leq n}.
    \end{equation*}
    By Theorem~\ref{thm:var} and the definition of the random variables $W,X,Y,Z$ (see~\eqref{eq:dfn:rv}),
    we obtain
    \begin{subequations}\label{eq:dfn:rv:tail:3}
      \begin{align}
        \prob_{\theta,\varepsilon}(O_{R,R'})
        &\leq \frac{4\Gamma^2}{2^{m_0/2}\eta_0^2},
        & R,R'\in\mathcal{D}_{\leq n}\otimes\mathcal{D}_{\leq n},\ R\neq R',&
                                                                              \label{eq:dfn:rv:tail:3:a}\\
        \prob_{\theta,\varepsilon}(D_{I,J})
        &\leq \frac{12\Gamma^2}{2^{m_0/2}\eta_0^2},
        &I,J\in\mathcal{D}_{\leq n}.&
                                      \label{eq:dfn:rv:tail:3:b}
      \end{align}
    \end{subequations}
    Combining~\eqref{eq:dfn:rv:tail:3} with~\eqref{proof:thm:factor:const:1} yields
    \begin{equation}\label{eq:dfn:rv:tail:4}
      \prob_{\theta,\varepsilon} \Biggr(
      \bigcup_{\substack{R,R'\in\mathcal{D}_{\leq n}\otimes\mathcal{D}_{\leq n}\\R\neq R'}} O_{R,R'} \cup
      \bigcup_{I,J\in\mathcal{D}_{\leq n}} D_{I,J}
      \Biggl)
      \leq 2^{4(n+3)}\frac{\Gamma^2}{2^{m_0/2}\eta_0^2}
      < 1.
    \end{equation}
    Hence, we can find at least one $(\theta,\varepsilon)\in\Omega_\theta\times\Omega_\varepsilon$
    such that
    \begin{subequations}\label{proof:thm:factor:almost-diag}
      \begin{align}
        |\langle T b_R^{(\theta,\varepsilon)}, b_{R'}^{(\theta,\varepsilon)}\rangle|
        \leq \eta_0,
        \qquad R,R'\in\mathcal{D}_{\leq n}\otimes\mathcal{D}_{\leq n},\ R\neq R',
        \label{proof:thm:factor:almost-diag:a}\\
        \Bigl|\langle T b_{I\times J}^{(\theta,\varepsilon)},
        b_{I\times J}^{(\theta,\varepsilon)}\rangle -
        \sum_{\substack{K\in\mathcal{X}_I\\L\in\mathcal{Y}_J}}
        \langle Th_{K\times L}, h_{K\times L}\rangle\Bigr|
        \leq \eta_0,
        \qquad I,J\in\mathcal{D}_{\leq n}.
        \label{proof:thm:factor:almost-diag:b}
      \end{align}
    \end{subequations}
    Recall that $\kappa=1$ by construction of $\mathcal{X}_I$ and $\mathcal{Y}_J$,
    $I,J\in\mathcal{D}_{\leq n}$ (see~\eqref{eq:proof:thm:factor:coll:2}).  Hence,
    by~\eqref{eq:thm:factor:large}, \textrefp[J]{enu:j1} and~\textrefp[J]{enu:j3} we obtain
    \begin{equation*}
      \sum_{\substack{K\in\mathcal{X}_I\\L\in\mathcal{Y}_J}}
      \langle Th_{K\times L}, h_{K\times L}\rangle
      \geq \sum_{\substack{K\in\mathcal{X}_I\\L\in\mathcal{Y}_J}}
      \delta |K\times L|
      = \delta |X_I\times Y_J|
      = \delta |I\times J|,
      \qquad I,J\in\mathcal{D}_{\leq n}.
    \end{equation*}
    The latter estimate and~\eqref{proof:thm:factor:almost-diag:b} give us
    \begin{equation}\label{proof:thm:factor:almost-diag:b:final:1}
      \langle T b_R^{(\theta,\varepsilon)}, b_R^{(\theta,\varepsilon)}\rangle
      \geq \delta |R| - \eta_0,
      \qquad R\in\mathcal{D}_{\leq n}\otimes\mathcal{D}_{\leq n}.
    \end{equation}
    Note that by Lemma~\ref{lem:block:basic-estimate} we have $\|b_R^{(\theta,\varepsilon)}\|_2^2 = R$,
    thus we obtain from~\eqref{proof:thm:factor:almost-diag:b:final:1}
    \begin{equation}\label{eq:proof:thm:factor:diag-est}
      \langle T b_R^{(\theta,\varepsilon)}, b_R^{(\theta,\varepsilon)}\rangle
      \geq (\delta - \eta_0 2^{2n}) \|b_R^{(\theta,\varepsilon)}\|_2^2,
      \qquad R\in\mathcal{D}_{\leq n}\otimes\mathcal{D}_{\leq n}.
    \end{equation}
    Combining~\eqref{proof:thm:factor:almost-diag:a} with~\eqref{eq:proof:thm:factor:diag-est}
    yields
    \begin{subequations}\label{proof:thm:factor:almost-diag:final}
      \begin{align}
        |\langle T b_R^{(\theta,\varepsilon)}, b_{R'}^{(\theta,\varepsilon)}\rangle|
        &\leq \eta_0,
        &R,R'\in\mathcal{D}_{\leq n}\otimes\mathcal{D}_{\leq n},\ R\neq R',&
                                                                             \label{proof:thm:factor:almost-diag:final:a}\\
        \langle T b_R^{(\theta,\varepsilon)}, b_R^{(\theta,\varepsilon)}\rangle
        &\geq (\delta - \eta_0 2^{2n}) \|b_R^{(\theta,\varepsilon)}\|_2^2,
        &R\in\mathcal{D}_{\leq n}\otimes\mathcal{D}_{\leq n}.&
                                                               \label{proof:thm:factor:almost-diag:final:b}
      \end{align}
    \end{subequations}
  \end{proofstep}

  \begin{proofstep}[Step~\theproofstep: Constructing the operators]
    Here, we will use the basic operators $B^{(\theta,\varepsilon)} : V_n\to V_N$ and
    $A^{(\theta,\varepsilon)} : V_N\to V_n$ given by
    \begin{subequations}\label{eq:proof:thm:factor:operators}
      \begin{align}
        B^{(\theta,\varepsilon)} f
        &= \sum_{R\in \mathcal{D}_{\leq n}\otimes\mathcal{D}_{\leq n}} \frac{\langle f, h_R\rangle}{\|h_R\|_2^2}
          b_R^{(\theta,\varepsilon)},
        && f\in V_n,
           \label{eq:proof:thm:factor:operators:a}\\
        A^{(\theta,\varepsilon)} f
        &= \sum_{R\in \mathcal{D}_{\leq n}\otimes\mathcal{D}_{\leq n}}
          \frac{\langle f, b_R^{(\theta,\varepsilon)}\rangle}{\|b_R^{(\theta,\varepsilon)}\|_2^2} h_R,
        && f\in V_N,
           \label{eq:proof:thm:factor:operators:b}
      \end{align}
    \end{subequations}
    as building blocks for the operators $E$ and $F$ in diagram~\eqref{eq:thm:factor:diag}.  Let us
    recall that by Theorem~\ref{thm:projection}, the operators $B^{(\theta,\varepsilon)}$ and
    $A^{(\theta,\varepsilon)}$ satisfy the estimates
    \begin{equation}\label{eq:proof:thm:factor:estimates}
      \|B^{(\theta,\varepsilon)}\|
      \leq 1
      \qquad\text{and}\qquad
      \|A^{(\theta,\varepsilon)}\|
      \leq 1,
    \end{equation}
    and $P^{(\theta,\varepsilon)} : V_N\to V_N$ defined as
    $P^{(\theta,\varepsilon)} = B^{(\theta,\varepsilon)}A^{(\theta,\varepsilon)}$ is a norm $1$
    projection given by
    \begin{equation}\label{eq:proof:thm:factor:P:definition}
      P^{(\theta,\varepsilon)} f
      = \sum_{R\in\mathcal{D}_{\leq n}\otimes\mathcal{D}_{\leq n}}
      \frac{\langle f, b_R^{(\theta,\varepsilon)}\rangle}{\|b_R^{(\theta,\varepsilon)}\|_2^2}
      b_R^{(\theta,\varepsilon)},
      \qquad f\in V_N.
    \end{equation}

    Now put $Y = P^{(\theta,\varepsilon)}(V_N)$ and note that the following diagram is commutative:
    \begin{equation}\label{eq:proof:thm:factor:commutative-diagram:preimage}
      \vcxymatrix{V_n \ar[rr]^{\Id_{V_n}} \ar[d]_{B^{(\theta,\varepsilon)}}
        && V_n\\
        Y \ar[rr]_{\Id_Y} && Y \ar[u]_{A^{(\theta,\varepsilon)}_{|Y}}}
      \qquad \|B^{(\theta,\varepsilon)}\|,\|A^{(\theta,\varepsilon)}_{|Y}\| \leq 1.
    \end{equation}
    Observe that $T$ almost acts as a multiplication operator on $Y$
    (see~\eqref{proof:thm:factor:almost-diag:final}).  Next, we define
    $U^{(\theta,\varepsilon)} : V_N\to Y$ by putting
    \begin{equation}\label{eq:proof:thm:factor:almost-inverse}
      U^{(\theta,\varepsilon)} f = \sum_{R\in\mathcal{D}_{\leq n}\otimes \mathcal{D}_{\leq n}}
      \frac{\langle f, b_R^{(\theta,\varepsilon)}\rangle}
      {\langle Tb_R^{(\theta,\varepsilon)}, b_R^{(\theta,\varepsilon)}\rangle}
      b_R^{(\theta,\varepsilon)},
      \qquad f\in V_N.
    \end{equation}
    By the $1$-unconditionality of the bi-parameter Haar system in $V_N$ and the definition of the
    norm $1$ projection $P^{(\theta,\varepsilon)}$ (see~\eqref{eq:proof:thm:factor:P:definition}
    and~\eqref{eq:proof:thm:factor:estimates}), we obtain
    \begin{equation}\label{eq:proof:thm:factor:U-bound}
      \|U^{(\theta,\varepsilon)}\|
      \leq \frac{\|P^{(\theta,\varepsilon)}\|}{\delta - \eta_0 2^{2n}}
      \leq \frac{1}{\delta - \eta_0 2^{2n}}.
    \end{equation}

    We will now show that $U^{(\theta,\varepsilon)} : V_N\to Y$ almost acts as an inverse of $T$
    restricted to $Y$.  Firstly, for all
    $g = \sum_{R\in\mathcal{D}_{\leq n}\otimes\mathcal{D}_{\leq n}} a_R b_R^{(\theta,\varepsilon)}
    \in Y$, we have the following identity:
    \begin{equation}\label{eq:proof:thm:factor:crucial-identity}
      U^{(\theta,\varepsilon)}Tg - g
      = \sum_{\substack{R,R'\in\mathcal{D}_{\leq n}\otimes\mathcal{D}_{\leq n}\\R' \neq R}} a_{R'}
      \frac{\langle T b_{R'}^{(\theta,\varepsilon)}, b_R^{(\theta,\varepsilon)}\rangle}
      {\langle Tb_R^{(\theta,\varepsilon)}, b_R^{(\theta,\varepsilon)}\rangle} b_R^{(\theta,\varepsilon)}.
    \end{equation}
    Secondly, by Lemma~\ref{lem:block:basic-estimate}, we have the estimate
    \begin{equation*}
      |a_{R'}|
      \leq \frac{\|g\|_{V_N}}{\|b_{R'}^{(\theta,\varepsilon)}\|_{V_N}}
      \leq 2^{2(n+1)}\|g\|_{V_N},
      \qquad R'\in\mathcal{D}_{\leq n}\otimes\mathcal{D}_{\leq n},
    \end{equation*}
    thus, by~\eqref{eq:proof:thm:factor:crucial-identity}
    and~\eqref{proof:thm:factor:almost-diag:final}, we obtain
    \begin{equation}\label{eq:proof:thm:factor:crucial-estimate:2}
      \|U^{(\theta,\varepsilon)}Tg - g\|_{V_N}
      \leq \frac{\eta_0 2^{8(n+1)}}{\delta-\eta_0 2^{2n}} \|g\|_{V_N}.
    \end{equation}

    Next, let $I : Y\to V_N$ denote the operator given by $Iy = y$.  Observe that
    by~\eqref{proof:thm:factor:const:1} we have
    $\frac{\eta_0 2^{8(n+1)}}{\delta-\eta_0 2^{2n}} < 1$, hence we obtain
    from~\eqref{eq:proof:thm:factor:crucial-estimate:2} that
    \begin{equation}\label{eq:proof:thm:factor:crucial-estimate:3}
      \|(U^{(\theta,\varepsilon)}TI)^{-1}g\|_{V_N}
      \leq \frac{1}{1-\frac{\eta_0 2^{8(n+1)}}{\delta-\eta_0 2^{2n}}} \|g\|_{V_N}.
    \end{equation}
    Now, define $S^{(\theta,\varepsilon)} : V_N\to Y$ by putting
    $S^{(\theta,\varepsilon)}=(U^{(\theta,\varepsilon)}TI)^{-1}U^{(\theta,\varepsilon)}$, and note
    that~\eqref{eq:proof:thm:factor:U-bound}, \eqref{eq:proof:thm:factor:crucial-estimate:3}
    and~\eqref{proof:thm:factor:const:1} gives us
    \begin{equation*}
      \|S^{(\theta,\varepsilon)}\|
      \leq \frac{1}{\delta - \eta_0 (2^{2n} + 2^{8(n+1)})}
      \leq \frac{1+\eta}{\delta}.
    \end{equation*}
    Moreover, the following diagram is commutative:
    \begin{equation}\label{eq:proof:thm:factor:commutative-diagram:image}
      \vcxymatrix{
        Y \ar@/^/[rrrr]^{\Id_Y} \ar@/_/[dd]_I \ar[rrd]_{U^{(\theta,\varepsilon)}TI} &&&& Y\\
        && Y \ar[rru]^{(U^{(\theta,\varepsilon)}TI)^{-1}} &&\\
        V_N \ar[rrrr]_T &&&& V_N \ar[llu]_{U^{(\theta,\varepsilon)}}
        \ar[uu]_{S^{(\theta,\varepsilon)}}
      }
      \qquad \|I\|\|S^{(\theta,\varepsilon)}\| \leq \frac{1+\eta}{\delta}.
    \end{equation}
    Merging the diagrams~\eqref{eq:proof:thm:factor:commutative-diagram:preimage}
    and~\eqref{eq:proof:thm:factor:commutative-diagram:image} yields
    \begin{equation}\label{eq:proof:thm:factor:commutative-diagram:merged}
      \vcxymatrix{%
        V_n \ar@/_{20pt}/[ddd]_{E} \ar[rrrr]^{I_{V_n}} \ar[d]^{B^{(\theta,\varepsilon)}} &&&& V_n\\
        Y \ar@/^/[rrrr]^{\Id_Y} \ar@/_/[dd]_I \ar[rrd]_{U^{(\theta,\varepsilon)}TI} &&&& Y \ar[u]^{A^{(\theta,\varepsilon)}_{|Y}}\\
        && Y \ar[rru]^{(U^{(\theta,\varepsilon)}TI)^{-1}} &&\\
        V_N \ar[rrrr]_T &&&& V_N \ar[llu]_{U^{(\theta,\varepsilon)}}
        \ar[uu]_{S^{(\theta,\varepsilon)}} \ar@/_{30pt}/[uuu]_{F}
      }
      \qquad \|E\| \|F\| \leq \frac{1+\eta}{\delta}.
    \end{equation}
  \end{proofstep}

  Finally, by reviewing the construction of our block basis $b_R^{(\theta)}$,
  $R\in\mathcal{D}_{\leq n}\otimes\mathcal{D}_{\leq n}$ (see~\eqref{eq:proof:thm:factor:coll:1}
  and~\eqref{eq:proof:thm:factor:coll:2}), the definition of our basic operators
  $B^{(\theta,\varepsilon)}$ and $A^{(\theta,\varepsilon)}$ and the constants defined
  in~\eqref{proof:thm:factor:const:1}, we conclude that~\eqref{eq:thm:factor:dim} is an appropriate
  choice for $N$.
\end{myproof}

%%% Local Variables:
%%% mode: latex
%%% TeX-master: "main"
%%% End:

\section{Proof of Theorem~\ref{thm:var}}\label{sec:lem-proof}

\noindent
We only present the proof for $V_N = H_N^p(H_N^q)$.  For $V_N = (H_N^p(H_N^q))^*$, the roles of
$p,q$ and $p',q'$ are reversed, where $\frac{1}{p} + \frac{1}{p'} = 1$ and
$\frac{1}{q} + \frac{1}{q'} = 1$.

The proof is divided into four parts:
\begin{itemize}
\item Estimates for $W$,
\item Estimates for $X$,
\item Estimates for $Y$,
\item Estimates for $Z$.
\end{itemize}
For each of the four random variables $W,X,Y,Z$, there is a unique ensemble of summation parameters,
which is recorded at the beginning of each section.  The summation parameters are split into
separate cases.  Every case possess a left variant and a right variant, which refers to whether we
place the sum in the left argument or in the right argument of the bilinear form.  The estimates
obtained for the left and the right variant of a case are combined to a single estimate at the end
of each case.

Before we begin with the proof, we make the following crucial observations: Firstly,
$\cond_\zeta \zeta_{S_0}\zeta_{S_1}\zeta_{S_0'}\zeta_{S_1'}\in\{0,1\}$, for all
$S_0,S_1,S_0',S_1'\in\mathcal{D}$.  Secondly, given dyadic intervals
$S_0,S_1,S_0',S_1'\in\mathcal{D}$, we have that
$\cond_\zeta \zeta_{S_0}\zeta_{S_1}\zeta_{S_0'}\zeta_{S_1'} = 1$ if and only if one of the following
conditions \textrefp[R]{enu:proof:lem:var:r1}--\textrefp[R]{enu:proof:lem:var:r4} is satisfied:
\begin{enumerate}[(R1)]
\item\label{enu:proof:lem:var:r1} $S_0 = S_1 = S_0' = S_1'$;
\item\label{enu:proof:lem:var:r2} $S_0 = S_1\neq S_0' = S_1'$;
\item\label{enu:proof:lem:var:r3} $S_0 = S_0'\neq S_1 = S_1'$;
\item\label{enu:proof:lem:var:r4} $S_0 = S_1'\neq S_1 = S_0'$.
\end{enumerate}

\subsection{Estimates for $W$}

In this case, the following variables will always be summed over the following sets:
\begin{itemize}
\item%\label{enu:proof:lem:var:w-sum:1}
  $K_0$, $K_1$ over $\mathcal{X}_I$;
\item%\label{enu:proof:lem:var:w-sum:2}
  $K_0'$, $K_1'$ over $\mathcal{X}_{I'}$;
\item%\label{enu:proof:lem:var:w-sum:3}
  $L_0$, $L_1$ over $\mathcal{Y}_J$;
\item%n\label{enu:proof:lem:var:w-sum:4}
  $L_0'$, $L_1'$ over $\mathcal{Y}_{J'}$.
\end{itemize}

\begin{myproof}
  First, note that by~\eqref{eq:dfn:rv:W} and~\eqref{eq:b:dfn} we obtain $W^2(\theta,\varepsilon)$
  is given by
  \begin{equation}\label{eq:proof:lem:var:W}
    \sum_{\substack{K_0,K_1,K_0',K_1'\\L_0,L_1,L_0',L_1'}}
    \theta_{K_0}\theta_{K_1}\theta_{K_0'}\theta_{K_1'}
    \varepsilon_{L_0}\varepsilon_{L_1}\varepsilon_{L_0'}\varepsilon_{L_1'}
    \langle T h_{K_0\times L_0}, h_{K_0'\times L_0'}\rangle
    \langle T h_{K_1\times L_1}, h_{K_1'\times L_1'}\rangle
  \end{equation}
  In view of~\textrefp[J]{enu:j1}
  and~\textrefp[R]{enu:proof:lem:var:r1}--\textrefp[R]{enu:proof:lem:var:r4}, we obtain that
  \begin{equation}\label{eq:proof:lem:var:W:cond}
    \cond_{\theta,\varepsilon} W^2
    = \sum_{K_0,K_0',L_0,L_0'} \langle T h_{K_0\times L_0}, h_{K_0'\times L_0'}\rangle^2.
  \end{equation}
  
  \begin{proofcase}[Case~\theproofcase: $K_0 = K_1 \neq K_0' = K_1'$, $L_0 = L_1 \neq L_0' = L_1'$
    (((1) (1)) ((0) (0)) (nil nil)) -- left variant]
    In this case, we have to estimate
    \begin{equation}\label{eq:w-est(((0 1 0 1) (0 1 0 1)) (((1) (1)) ((0) (0)) (nil nil))):0}
      \sum_{K_0', L_0', K_0, L_0} \langle Th_{K_0\times L_0}, h_{K_0'\times L_0'} \rangle \langle Th_{K_0\times L_0}, h_{K_0'\times L_0'} \rangle.
    \end{equation}
    We put $a_{K_0, K_0', L_0, L_0'} = \langle T h_{K_0\times L_0}, h_{K_0'\times L_0'} \rangle$ and
    note the estimate
    \begin{equation}\label{eq:w-est(((0 1 0 1) (0 1 0 1)) (((1) (1)) ((0) (0)) (nil nil))):1}
      |a_{K_0, K_0', L_0, L_0'}|
      \leq \|T\| |K_0|^{1/{p}} |L_0|^{1/{q}} |K_0'|^{1/{p'}} |L_0'|^{1/{q'}}.
    \end{equation}
    Now, we write~\eqref{eq:w-est(((0 1 0 1) (0 1 0 1)) (((1) (1)) ((0) (0)) (nil nil))):0} as
    follows:
    \begin{equation}\label{eq:w-est(((0 1 0 1) (0 1 0 1)) (((1) (1)) ((0) (0)) (nil nil))):2}
      \sum_{K_0', L_0'} \Bigl\langle T \sum_{K_0, L_0} a_{K_0, K_0', L_0, L_0'} h_{K_0\times L_0},  h_{K_0'\times L_0'} \Bigr\rangle.
    \end{equation}
    By duality, we obtain the subsequent upper estimate for~\eqref{eq:w-est(((0 1 0 1) (0 1 0 1))
      (((1) (1)) ((0) (0)) (nil nil))):2}:
    \begin{equation}\label{eq:w-est(((0 1 0 1) (0 1 0 1)) (((1) (1)) ((0) (0)) (nil nil))):3}
      \sum_{K_0', L_0'}\left\|T \sum_{K_0, L_0} a_{K_0, K_0', L_0, L_0'} h_{K_0\times L_0}\right\|_{H^p(H^q)} \left\| h_{K_0'\times L_0'}\right\|_{(H^p(H^q))^*}.
    \end{equation}
    Estimate~\eqref{eq:w-est(((0 1 0 1) (0 1 0 1)) (((1) (1)) ((0) (0)) (nil nil))):1} and the
    disjointness of the dyadic intervals (see~\textrefp[J]{enu:j1}) yield
    \begin{equation}\label{eq:w-est(((0 1 0 1) (0 1 0 1)) (((1) (1)) ((0) (0)) (nil nil))):4}
      \sum_{K_0', L_0'}\|T\| \left\| \sum_{K_0, L_0}\max_{K_0, L_0}\left(\|T\| |K_0|^{1/{p}} |L_0|^{1/{q}} |K_0'|^{1/{p'}} |L_0'|^{1/{q'}}\right) h_{K_0\times L_0}\right\|_{H^p(H^q)}\left\| h_{K_0'\times L_0'}\right\|_{(H^p(H^q))^*}.
    \end{equation}
    Consequently, we obtain
    \begin{equation}\label{eq:w-est(((0 1 0 1) (0 1 0 1)) (((1) (1)) ((0) (0)) (nil nil))):5}
      \sum_{K_0', L_0'}\|T\| \max_{K_0, L_0}\left(\|T\| |K_0|^{1/{p}} |L_0|^{1/{q}} |K_0'|^{1/{p'}} |L_0'|^{1/{q'}}\right) \left\| \sum_{K_0, L_0}h_{K_0\times L_0}\right\|_{H^p(H^q)}\left\| h_{K_0'\times L_0'}\right\|_{(H^p(H^q))^*}.
    \end{equation}
    %%%%%%%%%%%%%%%% BEGIN MANUAL EDIT %%%%%%%%%%%%%%%%
    Thus,~\eqref{eq:w-est(((0 1 0 1) (0 1 0 1)) (((1) (1)) ((0) (0)) (nil nil))):5} is bounded from
    above by
    \begin{equation*}
      \|T\|^2 \max_{K_0, L_0}  |K_0|^{1/{p}} |L_0|^{1/{q}}
      \sum_{K_0', L_0'} |K_0'|^{2/{p'}} |L_0'|^{2/{q'}},
    \end{equation*}
    which by Hölder's inequality is dominated by
    \begin{equation}\label{eq:w-est(((0 1 0 1) (0 1 0 1)) (((1) (1)) ((0) (0)) (nil nil))):6}
      \|T\|^2 \max_{K_0, L_0, K_0', L_0'} |K_0|^{1/{p}} |L_0|^{1/{q}} |K_0'|^{2/{p'}-1} |L_0'|^{2/{q'}-1}.
    \end{equation}
    Inserting $|K_0|, |L_0|, |K_0'|, |L_0'| \leq \alpha$ (see~\eqref{eq:alpha-small})
    into~\eqref{eq:w-est(((0 1 0 1) (0 1 0 1)) (((1) (1)) ((0) (0)) (nil nil))):6}, we obtain the
    estimate
    \begin{equation}\label{eq:w-est(((0 1 0 1) (0 1 0 1)) (((1) (1)) ((0) (0)) (nil nil))):7}
      \|T\|^2 \alpha^{1/{p'}+1/{q'}}.
    \end{equation}
    %%%%%%%%%%%%%%%% END MANUAL EDIT %%%%%%%%%%%%%%%%
  \end{proofcase}

  \begin{proofcase}[Case~\theproofcase: $K_0 = K_1 \neq K_0' = K_1'$, $L_0 = L_1 \neq L_0' = L_1'$
    (((0) (0)) (nil nil) ((1) (1))) -- right variant]
    In this case, we have to estimate
    \begin{equation}\label{eq:w-est(((0 1 0 1) (0 1 0 1)) (((0) (0)) (nil nil) ((1) (1)))):0}
      \sum_{K_0, L_0, K_0', L_0'} \langle Th_{K_0\times L_0}, h_{K_0'\times L_0'} \rangle \langle Th_{K_0\times L_0}, h_{K_0'\times L_0'} \rangle.
    \end{equation}
    We put $a_{K_0, K_0', L_0, L_0'} = \langle T h_{K_0\times L_0}, h_{K_0'\times L_0'} \rangle$ and
    note the estimate
    \begin{equation}\label{eq:w-est(((0 1 0 1) (0 1 0 1)) (((0) (0)) (nil nil) ((1) (1)))):1}
      |a_{K_0, K_0', L_0, L_0'}|
      \leq \|T\| |K_0|^{1/{p}} |L_0|^{1/{q}} |K_0'|^{1/{p'}} |L_0'|^{1/{q'}}.
    \end{equation}
    Now, we write~\eqref{eq:w-est(((0 1 0 1) (0 1 0 1)) (((0) (0)) (nil nil) ((1) (1)))):0} as
    follows:
    \begin{equation}\label{eq:w-est(((0 1 0 1) (0 1 0 1)) (((0) (0)) (nil nil) ((1) (1)))):2}
      \sum_{K_0, L_0} \Bigl\langle T  h_{K_0\times L_0}, \sum_{K_0', L_0'} a_{K_0, K_0', L_0, L_0'} h_{K_0'\times L_0'} \Bigr\rangle.
    \end{equation}
    By duality, we obtain the subsequent upper estimate for~\eqref{eq:w-est(((0 1 0 1) (0 1 0 1))
      (((0) (0)) (nil nil) ((1) (1)))):2}:
    \begin{equation}\label{eq:w-est(((0 1 0 1) (0 1 0 1)) (((0) (0)) (nil nil) ((1) (1)))):3}
      \sum_{K_0, L_0}\left\|T  h_{K_0\times L_0}\right\|_{H^p(H^q)} \left\|\sum_{K_0', L_0'} a_{K_0, K_0', L_0, L_0'} h_{K_0'\times L_0'}\right\|_{(H^p(H^q))^*}.
    \end{equation}
    Estimate~\eqref{eq:w-est(((0 1 0 1) (0 1 0 1)) (((0) (0)) (nil nil) ((1) (1)))):1} and the
    disjointness of the dyadic intervals (see~\textrefp[J]{enu:j1}) yield
    \begin{equation}\label{eq:w-est(((0 1 0 1) (0 1 0 1)) (((0) (0)) (nil nil) ((1) (1)))):4}
      \sum_{K_0, L_0}\|T\| \left\| h_{K_0\times L_0}\right\|_{H^p(H^q)}\left\| \sum_{K_0', L_0'}\max_{K_0', L_0'}\left(\|T\| |K_0|^{1/{p}} |L_0|^{1/{q}} |K_0'|^{1/{p'}} |L_0'|^{1/{q'}}\right) h_{K_0'\times L_0'}\right\|_{(H^p(H^q))^*}.
    \end{equation}
    Consequently, we obtain
    \begin{equation}\label{eq:w-est(((0 1 0 1) (0 1 0 1)) (((0) (0)) (nil nil) ((1) (1)))):5}
      \sum_{K_0, L_0}\|T\| \left\| h_{K_0\times L_0}\right\|_{H^p(H^q)}\max_{K_0', L_0'}\left(\|T\| |K_0|^{1/{p}} |L_0|^{1/{q}} |K_0'|^{1/{p'}} |L_0'|^{1/{q'}}\right) \left\| \sum_{K_0', L_0'}h_{K_0'\times L_0'}\right\|_{(H^p(H^q))^*}.
    \end{equation}
    %%%%%%%%%%%%%%%% BEGIN MANUAL EDIT %%%%%%%%%%%%%%%%
    Thus,~\eqref{eq:w-est(((0 1 0 1) (0 1 0 1)) (((0) (0)) (nil nil) ((1) (1)))):5} is bounded from
    above by
    \begin{equation*}
      \|T\|^2 \max_{K_0', L_0'}  |K_0'|^{1/{p'}} |L_0'|^{1/{q'}}
      \sum_{K_0, L_0} |K_0|^{2/{p}} |L_0|^{2/{q}},
    \end{equation*}
    which by Hölder's inequality is dominated by
    \begin{equation}\label{eq:w-est(((0 1 0 1) (0 1 0 1)) (((0) (0)) (nil nil) ((1) (1)))):6}
      \|T\|^2 \max_{K_0, L_0, K_0', L_0'}  |K_0'|^{1/{p'}} |L_0'|^{1/{q'}} |K_0|^{2/{p}-1} |L_0|^{2/{q}-1}.
    \end{equation}
    Inserting $|K_0|, |L_0|, |K_0'|, |L_0'| \leq \alpha$ (see~\eqref{eq:alpha-small})
    into~\eqref{eq:w-est(((0 1 0 1) (0 1 0 1)) (((0) (0)) (nil nil) ((1) (1)))):6}, we obtain the
    estimate
    \begin{equation}\label{eq:w-est(((0 1 0 1) (0 1 0 1)) (((0) (0)) (nil nil) ((1) (1)))):7}
      \|T\|^2 \alpha^{1/{p}+1/{q}}.
    \end{equation}
    %%%%%%%%%%%%%%%% END MANUAL EDIT %%%%%%%%%%%%%%%%
  \end{proofcase}

  %%%%%%%%%%%%%%%% BEGIN MANUAL EDIT %%%%%%%%%%%%%%%%
  \subsubsection*{Summary for $W$}
  Combining~\eqref{eq:w-est(((0 1 0 1) (0 1 0 1)) (((1) (1)) ((0) (0)) (nil nil))):7}
  with~\eqref{eq:w-est(((0 1 0 1) (0 1 0 1)) (((0) (0)) (nil nil) ((1) (1)))):7} yields
  \begin{equation}\label{eq:w-est:final}
    \cond_{\theta,\varepsilon} W^2
    \leq \|T\|^2 \alpha.
  \end{equation}
  %%%%%%%%%%%%%%%% END MANUAL EDIT %%%%%%%%%%%%%%%%
\end{myproof}

\subsection{Estimates for $X$}

In this case, the following variables will always be summed over the following sets:
\begin{itemize}
\item%\label{enu:proof:lem:var:w-sum:1}
  $K_0$, $K_1$ over $\mathcal{X}_I$;
\item%\label{enu:proof:lem:var:w-sum:2}
  $K_0'$, $K_1'$ over $\mathcal{X}_{I'}$;
\item%\label{enu:proof:lem:var:w-sum:3}
  $L_0$, $L_1$, $L'$, $L_0'$, $L_1'$ over $\mathcal{Y}_J$.
\end{itemize}

\begin{myproof}
  Note that by~\eqref{eq:dfn:rv:X} and~\eqref{eq:b:dfn} we obtain $X^2(\theta,\varepsilon)$ is given
  by
  \begin{equation}\label{proof:eq:lem:var:X}
    \sum_{\substack{K_0,K_1,K_0',K_1'\\L_0,L_1,L_0',L_1'}}
    \theta_{K_0}\theta_{K_1}\theta_{K_0'}\theta_{K_1'}
    \varepsilon_{L_0}\varepsilon_{L_1}\varepsilon_{L_0'}\varepsilon_{L_1'}
    \langle T h_{K_0\times L_0}, h_{K_0'\times L_0'}\rangle
    \langle T h_{K_1\times L_1}, h_{K_1'\times L_1'}\rangle
  \end{equation}
  Note that since in this case $I\neq I'$, we have that
  $\mathcal{X}_I\cap \mathcal{X}_{I'} = \emptyset$, by~\textrefp[J]{enu:j1}.  Thus,
  $\cond_{\theta} \theta_{K_0}\theta_{K_0'}\theta_{K_1}\theta_{K_1'} \neq 0$, only if
  $K_0 = K_1 \neq K_0' = K_1'$.  Hence, in view
  of~\textrefp[R]{enu:proof:lem:var:r1}--\textrefp[R]{enu:proof:lem:var:r4}, we decompose the index
  set in~\eqref{proof:eq:lem:var:X} into the following four groups:
  \begin{enumerate}[({a}1)]
  \item\label{enu:proof:lem:var:X:a:1} $K_0 = K_1 \neq K_0' = K_1'$ and $L_0 = L_1 = L_0' = L_1'$;
  \item\label{enu:proof:lem:var:X:a:2} $K_0 = K_1 \neq K_0' = K_1'$ and
    $L_0 = L_1 \neq L_0' = L_1'$;
  \item\label{enu:proof:lem:var:X:a:3} $K_0 = K_1 \neq K_0' = K_1'$ and
    $L_0 = L_1' \neq L_0' = L_1$;
  \item\label{enu:proof:lem:var:X:a:4} $K_0 = K_1 \neq K_0' = K_1'$ and
    $L_0 = L_0' \neq L_1 = L_1'$.
  \end{enumerate}
  
  \begin{proofcase}[Case~\theproofcase, group~{\textrefp[a]{enu:proof:lem:var:X:a:1}}:
    $K_0 = K_1 \neq K_0' = K_1'$, $L_0 = L_0' = L_1 = L_1'$ (((1) (0)) ((0) nil) (nil nil)) -- left
    variant]\label{case:x-est(((0 1 0 1) (0 0 0 0)) (((1) (0)) ((0) nil) (nil nil))):left}
    In this case, we have to estimate
    \begin{equation}\label{eq:x-est(((0 1 0 1) (0 0 0 0)) (((1) (0)) ((0) nil) (nil nil))):0}
      \sum_{K_0', L_0, K_0} \langle Th_{K_0\times L_0}, h_{K_0'\times L_0} \rangle \langle Th_{K_0\times L_0}, h_{K_0'\times L_0} \rangle.
    \end{equation}
    We put $a_{K_0, K_0', L_0} = \langle T h_{K_0\times L_0}, h_{K_0'\times L_0} \rangle$ and note
    the estimate
    \begin{equation}\label{eq:x-est(((0 1 0 1) (0 0 0 0)) (((1) (0)) ((0) nil) (nil nil))):1}
      |a_{K_0, K_0', L_0}|
      \leq \|T\| |K_0|^{1/{p}} |L_0|^{1/{q}} |K_0'|^{1/{p'}} |L_0|^{1/{q'}}.
    \end{equation}
    Now, we write~\eqref{eq:x-est(((0 1 0 1) (0 0 0 0)) (((1) (0)) ((0) nil) (nil nil))):0} as
    follows:
    \begin{equation}\label{eq:x-est(((0 1 0 1) (0 0 0 0)) (((1) (0)) ((0) nil) (nil nil))):2}
      \sum_{K_0', L_0} \Bigl\langle T \sum_{K_0} a_{K_0, K_0', L_0} h_{K_0\times L_0},  h_{K_0'\times L_0} \Bigr\rangle.
    \end{equation}
    By duality, we obtain the subsequent upper estimate for~\eqref{eq:x-est(((0 1 0 1) (0 0 0 0))
      (((1) (0)) ((0) nil) (nil nil))):2}:
    \begin{equation}\label{eq:x-est(((0 1 0 1) (0 0 0 0)) (((1) (0)) ((0) nil) (nil nil))):3}
      \sum_{K_0', L_0}\left\|T \sum_{K_0} a_{K_0, K_0', L_0} h_{K_0\times L_0}\right\|_{H^p(H^q)} \left\| h_{K_0'\times L_0}\right\|_{(H^p(H^q))^*}.
    \end{equation}
    Estimate~\eqref{eq:x-est(((0 1 0 1) (0 0 0 0)) (((1) (0)) ((0) nil) (nil nil))):1} and the
    disjointness of the dyadic intervals (see~\textrefp[J]{enu:j1}) yield
    \begin{equation}\label{eq:x-est(((0 1 0 1) (0 0 0 0)) (((1) (0)) ((0) nil) (nil nil))):4}
      \sum_{K_0', L_0}\|T\| \left\| \sum_{K_0}\max_{K_0}\left(\|T\| |K_0|^{1/{p}} |L_0|^{1/{q}} |K_0'|^{1/{p'}} |L_0|^{1/{q'}}\right) h_{K_0\times L_0}\right\|_{H^p(H^q)}\left\| h_{K_0'\times L_0}\right\|_{(H^p(H^q))^*}.
    \end{equation}
    Consequently, we obtain
    \begin{equation}\label{eq:x-est(((0 1 0 1) (0 0 0 0)) (((1) (0)) ((0) nil) (nil nil))):5}
      \sum_{K_0', L_0}\|T\| \max_{K_0}\left(\|T\| |K_0|^{1/{p}} |L_0|^{1/{q}} |K_0'|^{1/{p'}} |L_0|^{1/{q'}}\right) \left\| \sum_{K_0}h_{K_0\times L_0}\right\|_{H^p(H^q)}\left\| h_{K_0'\times L_0}\right\|_{(H^p(H^q))^*}.
    \end{equation}
    %%%%%%%%%%%%%%%% BEGIN MANUAL EDIT %%%%%%%%%%%%%%%%
    Thus,~\eqref{eq:x-est(((0 1 0 1) (0 0 0 0)) (((1) (0)) ((0) nil) (nil nil))):5} is bounded from
    above by
    \begin{equation*}
      \|T\|^2 \max_{K_0} |K_0|^{1/{p}} \sum_{K_0', L_0} |L_0|^2 |K_0'|^{2/{p'}}.
    \end{equation*}
    Using Hölder's inequality yields
    \begin{equation}\label{eq:x-est(((0 1 0 1) (0 0 0 0)) (((1) (0)) ((0) nil) (nil nil))):6}
      \|T\|^2 \max_{K_0, K_0', L_0} |K_0|^{1/{p}} |L_0| |K_0'|^{2/{p'}-1}.
    \end{equation}
    Inserting $|K_0|, |L_0|, |K_0'|, |L_0'| \leq \alpha$ (see~\eqref{eq:alpha-small})
    into~\eqref{eq:x-est(((0 1 0 1) (0 0 0 0)) (((1) (0)) ((0) nil) (nil nil))):6}, we obtain the
    estimate
    \begin{equation}\label{eq:x-est(((0 1 0 1) (0 0 0 0)) (((1) (0)) ((0) nil) (nil nil))):7}
      \|T\|^2 \alpha^{1+1/{p'}}.
    \end{equation}
    %%%%%%%%%%%%%%%% END MANUAL EDIT %%%%%%%%%%%%%%%%
  \end{proofcase}

  \begin{proofcase}[Case~\theproofcase, group~{\textrefp[a]{enu:proof:lem:var:X:a:2}}: $K_0 = K_1 \neq K_0' = K_1'$, $L_0 = L_1 \neq L_0' = L_1'$
    (((1) (1)) ((0) (0)) (nil nil)) -- left variant]\label{case:x-est(((0 1 0 1) (0 1 0 1)) (((1) (1)) ((0) (0)) (nil nil))):left}
    In this case, we have to estimate
    \begin{equation}\label{eq:x-est(((0 1 0 1) (0 1 0 1)) (((1) (1)) ((0) (0)) (nil nil))):0}
      \sum_{K_0', L_0', K_0, L_0} \langle Th_{K_0\times L_0}, h_{K_0'\times L_0'} \rangle \langle Th_{K_0\times L_0}, h_{K_0'\times L_0'} \rangle.
    \end{equation}
    We put $a_{K_0, K_0', L_0, L_0'} = \langle T h_{K_0\times L_0}, h_{K_0'\times L_0'} \rangle$ and
    note the estimate
    \begin{equation}\label{eq:x-est(((0 1 0 1) (0 1 0 1)) (((1) (1)) ((0) (0)) (nil nil))):1}
      |a_{K_0, K_0', L_0, L_0'}|
      \leq \|T\| |K_0|^{1/{p}} |L_0|^{1/{q}} |K_0'|^{1/{p'}} |L_0'|^{1/{q'}}.
    \end{equation}
    Now, we write~\eqref{eq:x-est(((0 1 0 1) (0 1 0 1)) (((1) (1)) ((0) (0)) (nil nil))):0} as
    follows:
    \begin{equation}\label{eq:x-est(((0 1 0 1) (0 1 0 1)) (((1) (1)) ((0) (0)) (nil nil))):2}
      \sum_{K_0', L_0'} \Bigl\langle T \sum_{K_0, L_0} a_{K_0, K_0', L_0, L_0'} h_{K_0\times L_0},  h_{K_0'\times L_0'} \Bigr\rangle.
    \end{equation}
    By duality, we obtain the subsequent upper estimate for~\eqref{eq:x-est(((0 1 0 1) (0 1 0 1))
      (((1) (1)) ((0) (0)) (nil nil))):2}:
    \begin{equation}\label{eq:x-est(((0 1 0 1) (0 1 0 1)) (((1) (1)) ((0) (0)) (nil nil))):3}
      \sum_{K_0', L_0'}\left\|T \sum_{K_0, L_0} a_{K_0, K_0', L_0, L_0'} h_{K_0\times L_0}\right\|_{H^p(H^q)} \left\| h_{K_0'\times L_0'}\right\|_{(H^p(H^q))^*}.
    \end{equation}
    Estimate~\eqref{eq:x-est(((0 1 0 1) (0 1 0 1)) (((1) (1)) ((0) (0)) (nil nil))):1} and the
    disjointness of the dyadic intervals (see~\textrefp[J]{enu:j1}) yield
    \begin{equation}\label{eq:x-est(((0 1 0 1) (0 1 0 1)) (((1) (1)) ((0) (0)) (nil nil))):4}
      \sum_{K_0', L_0'}\|T\| \left\| \sum_{K_0, L_0}\max_{K_0, L_0}\left(\|T\| |K_0|^{1/{p}} |L_0|^{1/{q}} |K_0'|^{1/{p'}} |L_0'|^{1/{q'}}\right) h_{K_0\times L_0}\right\|_{H^p(H^q)}\left\| h_{K_0'\times L_0'}\right\|_{(H^p(H^q))^*}.
    \end{equation}
    Consequently, we obtain
    \begin{equation}\label{eq:x-est(((0 1 0 1) (0 1 0 1)) (((1) (1)) ((0) (0)) (nil nil))):5}
      \sum_{K_0', L_0'}\|T\| \max_{K_0, L_0}\left(\|T\| |K_0|^{1/{p}} |L_0|^{1/{q}} |K_0'|^{1/{p'}} |L_0'|^{1/{q'}}\right) \left\| \sum_{K_0, L_0}h_{K_0\times L_0}\right\|_{H^p(H^q)}\left\| h_{K_0'\times L_0'}\right\|_{(H^p(H^q))^*}.
    \end{equation}
    %%%%%%%%%%%%%%%% BEGIN MANUAL EDIT %%%%%%%%%%%%%%%%
    Thus,~\eqref{eq:x-est(((0 1 0 1) (0 1 0 1)) (((1) (1)) ((0) (0)) (nil nil))):5} is bounded from
    above by
    \begin{equation*}
      \|T\|^2 \max_{K_0, L_0} |K_0|^{1/{p}} |L_0|^{1/{q}} \sum_{K_0', L_0'}  |K_0'|^{2/{p'}} |L_0'|^{2/{q'}}.
    \end{equation*}
    Using Hölder's inequality yields
    \begin{equation}\label{eq:x-est(((0 1 0 1) (0 1 0 1)) (((1) (1)) ((0) (0)) (nil nil))):6}
      \|T\|^2 \max_{K_0, L_0, K_0', L_0'} |K_0|^{1/{p}} |L_0|^{1/{q}} |K_0'|^{2/{p'}-1} |L_0'|^{2/{q'}-1}.
    \end{equation}
    Inserting $|K_0|, |L_0|, |K_0'|, |L_0'| \leq \alpha$ (see~\eqref{eq:alpha-small})
    into~\eqref{eq:x-est(((0 1 0 1) (0 1 0 1)) (((1) (1)) ((0) (0)) (nil nil))):6}, we obtain the
    estimate
    \begin{equation}\label{eq:x-est(((0 1 0 1) (0 1 0 1)) (((1) (1)) ((0) (0)) (nil nil))):7}
      \|T\|^2 \alpha^{1/{p'}+1/{q'}}.
    \end{equation}
    %%%%%%%%%%%%%%%% END MANUAL EDIT %%%%%%%%%%%%%%%%
  \end{proofcase}

  \begin{proofcase}[Case~\theproofcase, group~{\textrefp[a]{enu:proof:lem:var:X:a:2}}: $K_0 = K_1 \neq K_0' = K_1'$, $L_0 = L_1 \neq L_0' = L_1'$
    (((0) (0)) (nil nil) ((1) (1))) -- right variant]\label{case:x-est(((0 1 0 1) (0 1 0 1)) (((0) (0)) (nil nil) ((1) (1)))):right}
    In this case, we have to estimate
    \begin{equation}\label{eq:x-est(((0 1 0 1) (0 1 0 1)) (((0) (0)) (nil nil) ((1) (1)))):0}
      \sum_{K_0, L_0, K_0', L_0'} \langle Th_{K_0\times L_0}, h_{K_0'\times L_0'} \rangle \langle Th_{K_0\times L_0}, h_{K_0'\times L_0'} \rangle.
    \end{equation}
    We put $a_{K_0, K_0', L_0, L_0'} = \langle T h_{K_0\times L_0}, h_{K_0'\times L_0'} \rangle$ and
    note the estimate
    \begin{equation}\label{eq:x-est(((0 1 0 1) (0 1 0 1)) (((0) (0)) (nil nil) ((1) (1)))):1}
      |a_{K_0, K_0', L_0, L_0'}|
      \leq \|T\| |K_0|^{1/{p}} |L_0|^{1/{q}} |K_0'|^{1/{p'}} |L_0'|^{1/{q'}}.
    \end{equation}
    Now, we write~\eqref{eq:x-est(((0 1 0 1) (0 1 0 1)) (((0) (0)) (nil nil) ((1) (1)))):0} as
    follows:
    \begin{equation}\label{eq:x-est(((0 1 0 1) (0 1 0 1)) (((0) (0)) (nil nil) ((1) (1)))):2}
      \sum_{K_0, L_0} \Bigl\langle T  h_{K_0\times L_0}, \sum_{K_0', L_0'} a_{K_0, K_0', L_0, L_0'} h_{K_0'\times L_0'} \Bigr\rangle.
    \end{equation}
    By duality, we obtain the subsequent upper estimate for~\eqref{eq:x-est(((0 1 0 1) (0 1 0 1))
      (((0) (0)) (nil nil) ((1) (1)))):2}:
    \begin{equation}\label{eq:x-est(((0 1 0 1) (0 1 0 1)) (((0) (0)) (nil nil) ((1) (1)))):3}
      \sum_{K_0, L_0}\left\|T  h_{K_0\times L_0}\right\|_{H^p(H^q)} \left\|\sum_{K_0', L_0'} a_{K_0, K_0', L_0, L_0'} h_{K_0'\times L_0'}\right\|_{(H^p(H^q))^*}.
    \end{equation}
    Estimate~\eqref{eq:x-est(((0 1 0 1) (0 1 0 1)) (((0) (0)) (nil nil) ((1) (1)))):1} and the
    disjointness of the dyadic intervals (see~\textrefp[J]{enu:j1}) yield
    \begin{equation}\label{eq:x-est(((0 1 0 1) (0 1 0 1)) (((0) (0)) (nil nil) ((1) (1)))):4}
      \sum_{K_0, L_0}\|T\| \left\| h_{K_0\times L_0}\right\|_{H^p(H^q)}\left\| \sum_{K_0', L_0'}\max_{K_0', L_0'}\left(\|T\| |K_0|^{1/{p}} |L_0|^{1/{q}} |K_0'|^{1/{p'}} |L_0'|^{1/{q'}}\right) h_{K_0'\times L_0'}\right\|_{(H^p(H^q))^*}.
    \end{equation}
    Consequently, we obtain
    \begin{equation}\label{eq:x-est(((0 1 0 1) (0 1 0 1)) (((0) (0)) (nil nil) ((1) (1)))):5}
      \sum_{K_0, L_0}\|T\| \left\| h_{K_0\times L_0}\right\|_{H^p(H^q)}\max_{K_0', L_0'}\left(\|T\| |K_0|^{1/{p}} |L_0|^{1/{q}} |K_0'|^{1/{p'}} |L_0'|^{1/{q'}}\right) \left\| \sum_{K_0', L_0'}h_{K_0'\times L_0'}\right\|_{(H^p(H^q))^*}.
    \end{equation}
    %%%%%%%%%%%%%%%% BEGIN MANUAL EDIT %%%%%%%%%%%%%%%%
    Thus,~\eqref{eq:x-est(((0 1 0 1) (0 1 0 1)) (((0) (0)) (nil nil) ((1) (1)))):5} is bounded from
    above by
    \begin{equation*}
      \|T\|^2 \max_{K_0', L_0'} |K_0'|^{1/{p'}} |L_0'|^{1/{q'}} \sum_{K_0, L_0} |K_0|^{2/{p}} |L_0|^{2/{q}}.
    \end{equation*}
    Using Hölder's inequality yields
    \begin{equation}\label{eq:x-est(((0 1 0 1) (0 1 0 1)) (((0) (0)) (nil nil) ((1) (1)))):6}
      \|T\|^2 \max_{K_0, L_0, K_0', L_0'} |K_0|^{2/{p}-1} |L_0|^{2/{q}-1} |K_0'|^{1/{p'}} |L_0'|^{1/{q'}}.
    \end{equation}
    Inserting $|K_0|, |L_0|, |K_0'|, |L_0'| \leq \alpha$ (see~\eqref{eq:alpha-small})
    into~\eqref{eq:x-est(((0 1 0 1) (0 1 0 1)) (((0) (0)) (nil nil) ((1) (1)))):6}, we obtain the
    estimate
    \begin{equation}\label{eq:x-est(((0 1 0 1) (0 1 0 1)) (((0) (0)) (nil nil) ((1) (1)))):7}
      \|T\|^2 \alpha^{1/{p}+1/{q}}.
    \end{equation}
    %%%%%%%%%%%%%%%% END MANUAL EDIT %%%%%%%%%%%%%%%%
  \end{proofcase}

  %%%%%%%%%%%%%%%% BEGIN MANUAL EDIT %%%%%%%%%%%%%%%%
  \subsubsection*{Summary of \textref[Case~]{case:x-est(((0 1 0 1) (0 1 0 1)) (((1) (1)) ((0) (0))
      (nil nil))):left} and \textref[Case~]{case:x-est(((0 1 0 1) (0 1 0 1)) (((0) (0)) (nil nil)
      ((1) (1)))):right}}
  Combining~\eqref{eq:x-est(((0 1 0 1) (0 1 0 1)) (((1) (1)) ((0) (0)) (nil nil))):7}
  with~\eqref{eq:x-est(((0 1 0 1) (0 1 0 1)) (((0) (0)) (nil nil) ((1) (1)))):7} yields
  \begin{equation}\label{eq:x-est(((0 1 0 1) (0 1 0 1)) (((1) (1)) ((0) (0)) (nil nil)))(((0 1 0 1)
      (0 1 0 1)) (((0) (0)) (nil nil) ((1) (1))))-final}
    \cond_{\theta,\varepsilon} X^2
    \leq \|T\|^2 \alpha.
  \end{equation}
  %%%%%%%%%%%%%%%% END MANUAL EDIT %%%%%%%%%%%%%%%%

  \begin{proofcase}[Case~\theproofcase, group~{\textrefp[a]{enu:proof:lem:var:X:a:3}}: $K_0 = K_1 \neq K_0' = K_1'$, $L_0 = L_1' \neq L_0' = L_1$ (((1) (0)) ((0) (1)) (nil nil)) -- left variant]\label{case:x-est(((0 1 0 1) (0 1 1 0)) (((1) (0)) ((0) (1)) (nil nil))):left}
    In this case, we have to estimate
    \begin{equation}\label{eq:x-est(((0 1 0 1) (0 1 1 0)) (((1) (0)) ((0) (1)) (nil nil))):0}
      \sum_{K_0', L_0, K_0, L_0'} \langle Th_{K_0\times L_0}, h_{K_0'\times L_0'} \rangle \langle Th_{K_0\times L_0'}, h_{K_0'\times L_0} \rangle.
    \end{equation}
    We put $a_{K_0, K_0', L_0, L_0'} = \langle T h_{K_0\times L_0}, h_{K_0'\times L_0'} \rangle$ and
    note the estimate
    \begin{equation}\label{eq:x-est(((0 1 0 1) (0 1 1 0)) (((1) (0)) ((0) (1)) (nil nil))):1}
      |a_{K_0, K_0', L_0, L_0'}|
      \leq \|T\| |K_0|^{1/{p}} |L_0|^{1/{q}} |K_0'|^{1/{p'}} |L_0'|^{1/{q'}}.
    \end{equation}
    Now, we write~\eqref{eq:x-est(((0 1 0 1) (0 1 1 0)) (((1) (0)) ((0) (1)) (nil nil))):0} as
    follows:
    \begin{equation}\label{eq:x-est(((0 1 0 1) (0 1 1 0)) (((1) (0)) ((0) (1)) (nil nil))):2}
      \sum_{K_0', L_0} \Bigl\langle T \sum_{K_0, L_0'} a_{K_0, K_0', L_0, L_0'} h_{K_0\times L_0'},  h_{K_0'\times L_0} \Bigr\rangle.
    \end{equation}
    By duality, we obtain the subsequent upper estimate for~\eqref{eq:x-est(((0 1 0 1) (0 1 1 0))
      (((1) (0)) ((0) (1)) (nil nil))):2}:
    \begin{equation}\label{eq:x-est(((0 1 0 1) (0 1 1 0)) (((1) (0)) ((0) (1)) (nil nil))):3}
      \sum_{K_0', L_0}\left\|T \sum_{K_0, L_0'} a_{K_0, K_0', L_0, L_0'} h_{K_0\times L_0'}\right\|_{H^p(H^q)} \left\| h_{K_0'\times L_0}\right\|_{(H^p(H^q))^*}.
    \end{equation}
    Estimate~\eqref{eq:x-est(((0 1 0 1) (0 1 1 0)) (((1) (0)) ((0) (1)) (nil nil))):1} and the
    disjointness of the dyadic intervals (see~\textrefp[J]{enu:j1}) yield
    \begin{equation}\label{eq:x-est(((0 1 0 1) (0 1 1 0)) (((1) (0)) ((0) (1)) (nil nil))):4}
      \sum_{K_0', L_0}\|T\| \left\| \sum_{K_0, L_0'}\max_{K_0, L_0'}\left(\|T\| |K_0|^{1/{p}} |L_0|^{1/{q}} |K_0'|^{1/{p'}} |L_0'|^{1/{q'}}\right) h_{K_0\times L_0'}\right\|_{H^p(H^q)}\left\| h_{K_0'\times L_0}\right\|_{(H^p(H^q))^*}.
    \end{equation}
    Consequently, we obtain
    \begin{equation}\label{eq:x-est(((0 1 0 1) (0 1 1 0)) (((1) (0)) ((0) (1)) (nil nil))):5}
      \sum_{K_0', L_0}\|T\| \max_{K_0, L_0'}\left(\|T\| |K_0|^{1/{p}} |L_0|^{1/{q}} |K_0'|^{1/{p'}} |L_0'|^{1/{q'}}\right) \left\| \sum_{K_0, L_0'}h_{K_0\times L_0'}\right\|_{H^p(H^q)}\left\| h_{K_0'\times L_0}\right\|_{(H^p(H^q))^*}.
    \end{equation}
    %%%%%%%%%%%%%%%% BEGIN MANUAL EDIT %%%%%%%%%%%%%%%%
    Thus,~\eqref{eq:x-est(((0 1 0 1) (0 1 1 0)) (((1) (0)) ((0) (1)) (nil nil))):5} is bounded from
    above by
    \begin{equation*}
      \|T\|^2 \max_{K_0, L_0'} |K_0|^{1/{p}}|L_0'|^{1/{q'}} \sum_{K_0', L_0} |K_0'|^{2/{p'}} |L_0|.
    \end{equation*}
    Using Hölder's inequality yields
    \begin{equation}\label{eq:x-est(((0 1 0 1) (0 1 1 0)) (((1) (0)) ((0) (1)) (nil nil))):6}
      \|T\|^2 \max_{K_0, K_0', L_0, L_0'} |K_0|^{1/{p}} |K_0'|^{2/{p'}-1} |L_0'|^{1/{q'}}
    \end{equation}
    Inserting $|K_0|, |K_0'|, |L_0'| \leq \alpha$ (see~\eqref{eq:alpha-small})
    into~\eqref{eq:x-est(((0 1 0 1) (0 1 1 0)) (((1) (0)) ((0) (1)) (nil nil))):6}, we obtain the
    estimate
    \begin{equation}\label{eq:x-est(((0 1 0 1) (0 1 1 0)) (((1) (0)) ((0) (1)) (nil nil))):7}
      \|T\|^2 \alpha^{1/{p'} + 1/{q'}}.
    \end{equation}
    %%%%%%%%%%%%%%%% END MANUAL EDIT %%%%%%%%%%%%%%%%
  \end{proofcase}

  \begin{proofcase}[Case~\theproofcase, group~{\textrefp[a]{enu:proof:lem:var:X:a:3}}: $K_0 = K_1 \neq K_0' = K_1'$, $L_0 = L_1' \neq L_0' = L_1$ (((0) (1)) (nil nil) ((1) (0))) -- right variant]\label{case:x-est(((0 1 0 1) (0 1 1 0)) (((0) (1)) (nil nil) ((1) (0)))):right}
    In this case, we have to estimate
    \begin{equation}\label{eq:x-est(((0 1 0 1) (0 1 1 0)) (((0) (1)) (nil nil) ((1) (0)))):0}
      \sum_{K_0, L_0', K_0', L_0} \langle Th_{K_0\times L_0}, h_{K_0'\times L_0'} \rangle \langle Th_{K_0\times L_0'}, h_{K_0'\times L_0} \rangle.
    \end{equation}
    We put $a_{K_0, K_0', L_0, L_0'} = \langle T h_{K_0\times L_0}, h_{K_0'\times L_0'} \rangle$ and
    note the estimate
    \begin{equation}\label{eq:x-est(((0 1 0 1) (0 1 1 0)) (((0) (1)) (nil nil) ((1) (0)))):1}
      |a_{K_0, K_0', L_0, L_0'}|
      \leq \|T\| |K_0|^{1/{p}} |L_0|^{1/{q}} |K_0'|^{1/{p'}} |L_0'|^{1/{q'}}.
    \end{equation}
    Now, we write~\eqref{eq:x-est(((0 1 0 1) (0 1 1 0)) (((0) (1)) (nil nil) ((1) (0)))):0} as
    follows:
    \begin{equation}\label{eq:x-est(((0 1 0 1) (0 1 1 0)) (((0) (1)) (nil nil) ((1) (0)))):2}
      \sum_{K_0, L_0'} \Bigl\langle T  h_{K_0\times L_0'}, \sum_{K_0', L_0} a_{K_0, K_0', L_0, L_0'} h_{K_0'\times L_0} \Bigr\rangle.
    \end{equation}
    By duality, we obtain the subsequent upper estimate for~\eqref{eq:x-est(((0 1 0 1) (0 1 1 0))
      (((0) (1)) (nil nil) ((1) (0)))):2}:
    \begin{equation}\label{eq:x-est(((0 1 0 1) (0 1 1 0)) (((0) (1)) (nil nil) ((1) (0)))):3}
      \sum_{K_0, L_0'}\left\|T  h_{K_0\times L_0'}\right\|_{H^p(H^q)} \left\|\sum_{K_0', L_0} a_{K_0, K_0', L_0, L_0'} h_{K_0'\times L_0}\right\|_{(H^p(H^q))^*}.
    \end{equation}
    Estimate~\eqref{eq:x-est(((0 1 0 1) (0 1 1 0)) (((0) (1)) (nil nil) ((1) (0)))):1} and the
    disjointness of the dyadic intervals (see~\textrefp[J]{enu:j1}) yield
    \begin{equation}\label{eq:x-est(((0 1 0 1) (0 1 1 0)) (((0) (1)) (nil nil) ((1) (0)))):4}
      \sum_{K_0, L_0'}\|T\| \left\| h_{K_0\times L_0'}\right\|_{H^p(H^q)}\left\| \sum_{K_0', L_0}\max_{K_0', L_0}\left(\|T\| |K_0|^{1/{p}} |L_0|^{1/{q}} |K_0'|^{1/{p'}} |L_0'|^{1/{q'}}\right) h_{K_0'\times L_0}\right\|_{(H^p(H^q))^*}.
    \end{equation}
    Consequently, we obtain
    \begin{equation}\label{eq:x-est(((0 1 0 1) (0 1 1 0)) (((0) (1)) (nil nil) ((1) (0)))):5}
      \sum_{K_0, L_0'}\|T\| \left\| h_{K_0\times L_0'}\right\|_{H^p(H^q)}\max_{K_0', L_0}\left(\|T\| |K_0|^{1/{p}} |L_0|^{1/{q}} |K_0'|^{1/{p'}} |L_0'|^{1/{q'}}\right) \left\| \sum_{K_0', L_0}h_{K_0'\times L_0}\right\|_{(H^p(H^q))^*}.
    \end{equation}
    %%%%%%%%%%%%%%%% BEGIN MANUAL EDIT %%%%%%%%%%%%%%%%
    Thus,~\eqref{eq:x-est(((0 1 0 1) (0 1 1 0)) (((0) (1)) (nil nil) ((1) (0)))):5} is bounded from
    above by
    \begin{equation*}
      \|T\|^2 \max_{K_0', L_0} |L_0|^{1/{q}} |K_0'|^{1/{p'}} \sum_{K_0, L_0'} |K_0|^{2/{p}} |L_0'|.
    \end{equation*}
    Using Hölder's inequality yields
    \begin{equation}\label{eq:x-est(((0 1 0 1) (0 1 1 0)) (((0) (1)) (nil nil) ((1) (0)))):6}
      \|T\|^2 \max_{K_0, K_0', L_0} |K_0|^{2/{p}-1} |K_0'|^{1/{p'}} |L_0|^{1/{q}}.
    \end{equation}
    Inserting $|K_0|, |K_0'|, |L_0| \leq \alpha$ (see~\eqref{eq:alpha-small})
    into~\eqref{eq:x-est(((0 1 0 1) (0 1 1 0)) (((0) (1)) (nil nil) ((1) (0)))):6}, we obtain the
    estimate
    \begin{equation}\label{eq:x-est(((0 1 0 1) (0 1 1 0)) (((0) (1)) (nil nil) ((1) (0)))):7}
      \|T\|^2 \alpha^{1/{p} + 1/{q}}.
    \end{equation}
    %%%%%%%%%%%%%%%% END MANUAL EDIT %%%%%%%%%%%%%%%%
  \end{proofcase}

  %%%%%%%%%%%%%%%% BEGIN MANUAL EDIT %%%%%%%%%%%%%%%%
  \subsubsection*{Summary of \textref[Case~]{case:x-est(((0 1 0 1) (0 1 1 0)) (((1) (0)) ((0) (1))
      (nil nil))):left} and \textref[Case~]{case:x-est(((0 1 0 1) (0 1 1 0)) (((0) (1)) (nil nil)
      ((1) (0)))):right}}
  Combining~\eqref{eq:x-est(((0 1 0 1) (0 1 1 0)) (((1) (0)) ((0) (1)) (nil nil))):7}
  with~\eqref{eq:x-est(((0 1 0 1) (0 1 1 0)) (((0) (1)) (nil nil) ((1) (0)))):7} yields
  \begin{equation}\label{eq:x-est(((0 1 0 1) (0 1 1 0)) (((1) (0)) ((0) (1)) (nil nil)))(((0 1 0 1)
      (0 1 1 0)) (((0) (1)) (nil nil) ((1) (0))))-final}
    \cond_{\theta,\varepsilon} X^2
    \leq \|T\|^2 \alpha.
  \end{equation}
  %%%%%%%%%%%%%%%% END MANUAL EDIT %%%%%%%%%%%%%%%%

  \begin{proofcase}[Case~\theproofcase, group~{\textrefp[a]{enu:proof:lem:var:X:a:4}}: $K_0 = K_1 \neq K_0' = K_1'$, $L_0 = L_0' \neq L_1 = L_1'$ (((1) (2)) ((0) (0)) (nil nil)) -- left variant]\label{case:x-est(((0 1 0 1) (0 0 2 2)) (((1) (2)) ((0) (0)) (nil nil))):left}
    In this case, we have to estimate
    \begin{equation}\label{eq:x-est(((0 1 0 1) (0 0 2 2)) (((1) (2)) ((0) (0)) (nil nil))):0}
      \sum_{K_0', L_1, K_0, L_0} \langle Th_{K_0\times L_0}, h_{K_0'\times L_0} \rangle \langle Th_{K_0\times L_1}, h_{K_0'\times L_1} \rangle.
    \end{equation}
    We put $a_{K_0, K_0', L_0} = \langle T h_{K_0\times L_0}, h_{K_0'\times L_0} \rangle$ and note
    the estimate
    \begin{equation}\label{eq:x-est(((0 1 0 1) (0 0 2 2)) (((1) (2)) ((0) (0)) (nil nil))):1}
      |a_{K_0, K_0', L_0}|
      \leq \|T\| |K_0|^{1/{p}} |L_0|^{1/{q}} |K_0'|^{1/{p'}} |L_0|^{1/{q'}}.
    \end{equation}
    Now, we write~\eqref{eq:x-est(((0 1 0 1) (0 0 2 2)) (((1) (2)) ((0) (0)) (nil nil))):0} as
    follows:
    \begin{equation}\label{eq:x-est(((0 1 0 1) (0 0 2 2)) (((1) (2)) ((0) (0)) (nil nil))):2}
      \sum_{K_0', L_1} \Bigl\langle T \sum_{K_0, L_0} a_{K_0, K_0', L_0} h_{K_0\times L_1},  h_{K_0'\times L_1} \Bigr\rangle.
    \end{equation}
    By duality, we obtain the subsequent upper estimate for~\eqref{eq:x-est(((0 1 0 1) (0 0 2 2))
      (((1) (2)) ((0) (0)) (nil nil))):2}:
    \begin{equation}\label{eq:x-est(((0 1 0 1) (0 0 2 2)) (((1) (2)) ((0) (0)) (nil nil))):3}
      \sum_{K_0', L_1}\left\|T \sum_{K_0, L_0} a_{K_0, K_0', L_0} h_{K_0\times L_1}\right\|_{H^p(H^q)} \left\| h_{K_0'\times L_1}\right\|_{(H^p(H^q))^*}.
    \end{equation}
    Estimate~\eqref{eq:x-est(((0 1 0 1) (0 0 2 2)) (((1) (2)) ((0) (0)) (nil nil))):1} and the
    disjointness of the dyadic intervals (see~\textrefp[J]{enu:j1}) yield
    \begin{equation}\label{eq:x-est(((0 1 0 1) (0 0 2 2)) (((1) (2)) ((0) (0)) (nil nil))):4}
      \sum_{K_0', L_1}\|T\| \left\| \sum_{K_0, L_0}\max_{K_0}\left(\|T\| |K_0|^{1/{p}} |L_0|^{1/{q}} |K_0'|^{1/{p'}} |L_0|^{1/{q'}}\right) h_{K_0\times L_1}\right\|_{H^p(H^q)}\left\| h_{K_0'\times L_1}\right\|_{(H^p(H^q))^*}.
    \end{equation}
    Consequently, we obtain
    \begin{equation}\label{eq:x-est(((0 1 0 1) (0 0 2 2)) (((1) (2)) ((0) (0)) (nil nil))):5}
      \sum_{K_0', L_1}\|T\| \sum_{L_0}\max_{K_0}\left(\|T\| |K_0|^{1/{p}} |L_0|^{1/{q}} |K_0'|^{1/{p'}} |L_0|^{1/{q'}}\right) \left\| \sum_{K_0}h_{K_0\times L_1}\right\|_{H^p(H^q)}\left\| h_{K_0'\times L_1}\right\|_{(H^p(H^q))^*}.
    \end{equation}
    %%%%%%%%%%%%%%%% BEGIN MANUAL EDIT %%%%%%%%%%%%%%%%
    Thus,~\eqref{eq:x-est(((0 1 0 1) (0 0 2 2)) (((1) (2)) ((0) (0)) (nil nil))):5} is bounded from
    above by
    \begin{equation*}
      \|T\|^2 \max_{K_0}  |K_0|^{1/{p}} \sum_{K_0', L_1} \sum_{L_0} |L_0| |K_0'|^{2/{p'}} |L_1|.
    \end{equation*}
    Using Hölder's inequality yields
    \begin{equation}\label{eq:x-est(((0 1 0 1) (0 0 2 2)) (((1) (2)) ((0) (0)) (nil nil))):6}
      \|T\|^2 \max_{K_0, K_0'}  |K_0|^{1/{p}} |K_0'|^{2/{p'}-1}
    \end{equation}
    Inserting $|K_0|, |K_0'| \leq \alpha$ (see~\eqref{eq:alpha-small}) into~\eqref{eq:x-est(((0 1 0
      1) (0 0 2 2)) (((1) (2)) ((0) (0)) (nil nil))):6}, we obtain the estimate
    \begin{equation}\label{eq:x-est(((0 1 0 1) (0 0 2 2)) (((1) (2)) ((0) (0)) (nil nil))):7}
      \|T\|^2 \alpha^{1/{p'}}.
    \end{equation}
    %%%%%%%%%%%%%%%% END MANUAL EDIT %%%%%%%%%%%%%%%%
  \end{proofcase}

  \begin{proofcase}[Case~\theproofcase, group~{\textrefp[a]{enu:proof:lem:var:X:a:4}}: $K_0 = K_1 \neq K_0' = K_1'$, $L_0 = L_0' \neq L_1 = L_1'$ (((0) (2)) (nil nil) ((1) (0))) -- right variant]\label{case:x-est(((0 1 0 1) (0 0 2 2)) (((0) (2)) (nil nil) ((1) (0)))):right}
    In this case, we have to estimate
    \begin{equation}\label{eq:x-est(((0 1 0 1) (0 0 2 2)) (((0) (2)) (nil nil) ((1) (0)))):0}
      \sum_{K_0, L_1, K_0', L_0} \langle Th_{K_0\times L_0}, h_{K_0'\times L_0} \rangle \langle Th_{K_0\times L_1}, h_{K_0'\times L_1} \rangle.
    \end{equation}
    We put $a_{K_0, K_0', L_0} = \langle T h_{K_0\times L_0}, h_{K_0'\times L_0} \rangle$ and note
    the estimate
    \begin{equation}\label{eq:x-est(((0 1 0 1) (0 0 2 2)) (((0) (2)) (nil nil) ((1) (0)))):1}
      |a_{K_0, K_0', L_0}|
      \leq \|T\| |K_0|^{1/{p}} |L_0|^{1/{q}} |K_0'|^{1/{p'}} |L_0|^{1/{q'}}.
    \end{equation}
    Now, we write~\eqref{eq:x-est(((0 1 0 1) (0 0 2 2)) (((0) (2)) (nil nil) ((1) (0)))):0} as
    follows:
    \begin{equation}\label{eq:x-est(((0 1 0 1) (0 0 2 2)) (((0) (2)) (nil nil) ((1) (0)))):2}
      \sum_{K_0, L_1} \Bigl\langle T  h_{K_0\times L_1}, \sum_{K_0', L_0} a_{K_0, K_0', L_0} h_{K_0'\times L_1} \Bigr\rangle.
    \end{equation}
    By duality, we obtain the subsequent upper estimate for~\eqref{eq:x-est(((0 1 0 1) (0 0 2 2))
      (((0) (2)) (nil nil) ((1) (0)))):2}:
    \begin{equation}\label{eq:x-est(((0 1 0 1) (0 0 2 2)) (((0) (2)) (nil nil) ((1) (0)))):3}
      \sum_{K_0, L_1}\left\|T  h_{K_0\times L_1}\right\|_{H^p(H^q)} \left\|\sum_{K_0', L_0} a_{K_0, K_0', L_0} h_{K_0'\times L_1}\right\|_{(H^p(H^q))^*}.
    \end{equation}
    Estimate~\eqref{eq:x-est(((0 1 0 1) (0 0 2 2)) (((0) (2)) (nil nil) ((1) (0)))):1} and the
    disjointness of the dyadic intervals (see~\textrefp[J]{enu:j1}) yield
    \begin{equation}\label{eq:x-est(((0 1 0 1) (0 0 2 2)) (((0) (2)) (nil nil) ((1) (0)))):4}
      \sum_{K_0, L_1}\|T\| \left\| h_{K_0\times L_1}\right\|_{H^p(H^q)}\left\| \sum_{K_0', L_0}\max_{K_0'}\left(\|T\| |K_0|^{1/{p}} |L_0|^{1/{q}} |K_0'|^{1/{p'}} |L_0|^{1/{q'}}\right) h_{K_0'\times L_1}\right\|_{(H^p(H^q))^*}.
    \end{equation}
    Consequently, we obtain
    \begin{equation}\label{eq:x-est(((0 1 0 1) (0 0 2 2)) (((0) (2)) (nil nil) ((1) (0)))):5}
      \sum_{K_0, L_1}\|T\| \left\| h_{K_0\times L_1}\right\|_{H^p(H^q)}\sum_{L_0}\max_{K_0'}\left(\|T\| |K_0|^{1/{p}} |L_0|^{1/{q}} |K_0'|^{1/{p'}} |L_0|^{1/{q'}}\right) \left\| \sum_{K_0'}h_{K_0'\times L_1}\right\|_{(H^p(H^q))^*}.
    \end{equation}
    %%%%%%%%%%%%%%%% BEGIN MANUAL EDIT %%%%%%%%%%%%%%%%
    Thus,~\eqref{eq:x-est(((0 1 0 1) (0 0 2 2)) (((0) (2)) (nil nil) ((1) (0)))):5} is bounded from
    above by
    \begin{equation*}
      \|T\|^2 \max_{K_0'} |K_0'|^{1/{p'}} \sum_{K_0, L_1} \sum_{L_0} |K_0|^{2/{p}} |L_0| |L_1|.
    \end{equation*}
    Using Hölder's inequality yields
    \begin{equation}\label{eq:x-est(((0 1 0 1) (0 0 2 2)) (((0) (2)) (nil nil) ((1) (0)))):6}
      \|T\|^2 \max_{K_0, K_0'} |K_0|^{2/{p}-1} |K_0'|^{1/{p'}}.
    \end{equation}
    Inserting $|K_0|, |K_0'| \leq \alpha$ (see~\eqref{eq:alpha-small}) into~\eqref{eq:x-est(((0 1 0
      1) (0 0 2 2)) (((0) (2)) (nil nil) ((1) (0)))):6}, we obtain the estimate
    \begin{equation}\label{eq:x-est(((0 1 0 1) (0 0 2 2)) (((0) (2)) (nil nil) ((1) (0)))):7}
      \|T\|^2 \alpha^{1/{p}}.
    \end{equation}
    %%%%%%%%%%%%%%%% END MANUAL EDIT %%%%%%%%%%%%%%%%
  \end{proofcase}
  
  %%%%%%%%%%%%%%%% BEGIN MANUAL EDIT %%%%%%%%%%%%%%%%
  \subsubsection*{Summary of \textref[Case~]{case:x-est(((0 1 0 1) (0 0 2 2)) (((1) (2)) ((0) (0))
      (nil nil))):left} and \textref[Case~]{case:x-est(((0 1 0 1) (0 0 2 2)) (((0) (2)) (nil nil)
      ((1) (0)))):right}}
  Combining~\eqref{eq:x-est(((0 1 0 1) (0 0 2 2)) (((1) (2)) ((0) (0)) (nil nil))):7}
  with~\eqref{eq:x-est(((0 1 0 1) (0 0 2 2)) (((0) (2)) (nil nil) ((1) (0)))):7} yields
  \begin{equation}\label{eq:x-est(((0 1 0 1) (0 0 2 2)) (((1) (2)) ((0) (0)) (nil nil)))(((0 1 0 1)
      (0 0 2 2)) (((0) (2)) (nil nil) ((1) (0))))-final}
    \cond_{\theta,\varepsilon} X^2
    \leq \|T\|^2 \alpha^{1/2}.
  \end{equation}
  %%%%%%%%%%%%%%%% END MANUAL EDIT %%%%%%%%%%%%%%%%

  %%%%%%%%%%%%%%%% BEGIN MANUAL EDIT %%%%%%%%%%%%%%%%
  \subsubsection*{Summary for $X$}
  Combining~\eqref{eq:x-est(((0 1 0 1) (0 0 0 0)) (((1) (0)) ((0) nil) (nil nil))):7}
  with~\eqref{eq:x-est(((0 1 0 1) (0 1 0 1)) (((1) (1)) ((0) (0)) (nil nil)))(((0 1 0 1) (0 1 0 1))
    (((0) (0)) (nil nil) ((1) (1))))-final}, \eqref{eq:x-est(((0 1 0 1) (0 1 1 0)) (((1) (0)) ((0)
    (1)) (nil nil)))(((0 1 0 1) (0 1 1 0)) (((0) (1)) (nil nil) ((1) (0))))-final}
  and~\eqref{eq:x-est(((0 1 0 1) (0 0 2 2)) (((1) (2)) ((0) (0)) (nil nil)))(((0 1 0 1) (0 0 2 2))
    (((0) (2)) (nil nil) ((1) (0))))-final} yields
  \begin{equation}\label{eq:x-est:final}
    \cond_{\theta,\varepsilon} X^2
    \leq 4 \|T\|^2 \alpha^{1/2}.
  \end{equation}
  %%%%%%%%%%%%%%%% END MANUAL EDIT %%%%%%%%%%%%%%%%
\end{myproof}

\subsection{Estimates for $Y$}

In this case, the following variables will always be summed over the following sets:
\begin{itemize}
\item%\label{enu:proof:lem:var:w-sum:1}
  $K_0$, $K_1$, $K_0'$, $K_1'$ over $\mathcal{X}_I$;
\item%\label{enu:proof:lem:var:w-sum:2}
  $L_0$, $L_1$ over $\mathcal{Y}_J$;
\item%\label{enu:proof:lem:var:w-sum:3}
  $L_0'$, $L_1'$ over $\mathcal{Y}_{J'}$.
\end{itemize}

\begin{myproof}
  Note that by~\eqref{eq:dfn:rv:Y} and~\eqref{eq:b:dfn} we obtain $Y^2(\theta,\varepsilon)$ is given
  by
  \begin{equation}\label{eq:proof:lem:var:Y}
    \sum_{\substack{K_0,K_1,K_0',K_1'\\L_0,L_1,L_0',L_1'}}
    \theta_{K_0}\theta_{K_1}\theta_{K_0'}\theta_{K_1'}
    \varepsilon_{L_0}\varepsilon_{L_1}\varepsilon_{L_0'}\varepsilon_{L_1'}
    \langle T h_{K_0\times L_0}, h_{K_0'\times L_0'}\rangle
    \langle T h_{K_1\times L_1}, h_{K_1'\times L_1'}\rangle
  \end{equation}
  Note that since in this case $J\neq J'$, we have that
  $\mathcal{Y}_J\cap \mathcal{Y}_{J'} = \emptyset$, by~\textrefp[J]{enu:j1}.  Thus,
  $\cond_{\varepsilon} \varepsilon_{L_0}\varepsilon_{L_0'}\varepsilon_{L_1}\varepsilon_{L_1'} \neq
  0$, only if $L_0 = L_1 \neq L_0' = L_1'$.  Hence, in view
  of~\textrefp[R]{enu:proof:lem:var:r1}--\textrefp[R]{enu:proof:lem:var:r4}, we decompose the index
  set in~\eqref{eq:proof:lem:var:Y} into the following four groups:
  \begin{enumerate}[({b}1)]
  \item\label{enu:proof:lem:var:Y:b:1} $K_0 = K_1 = K_0' = K_1'$ and $L_0 = L_1 \neq L_0' = L_1'$;
  \item\label{enu:proof:lem:var:Y:b:2} $K_0 = K_1 \neq K_0' = K_1'$ and
    $L_0 = L_1 \neq L_0' = L_1'$;
  \item\label{enu:proof:lem:var:Y:b:3} $K_0 = K_1' \neq K_0' = K_1$ and
    $L_0 = L_1 \neq L_0' = L_1'$;
  \item\label{enu:proof:lem:var:Y:b:4} $K_0 = K_0' \neq K_1 = K_1'$ and
    $L_0 = L_1 \neq L_0' = L_1'$.
  \end{enumerate}

  \begin{proofcase}[Case~\theproofcase, group~{\textrefp[b]{enu:proof:lem:var:Y:b:1}}: $K_0 = K_0' = K_1 = K_1'$, $L_0 = L_1 \neq L_0' = L_1'$
    (((0) (1)) (nil (0)) (nil nil)) -- left variant]\label{case:y-est(((0 0 0 0) (0 1 0 1)) (((0)
      (1)) (nil (0)) (nil nil))):left}
    In this case, we have to estimate
    \begin{equation}\label{eq:y-est(((0 0 0 0) (0 1 0 1)) (((0) (1)) (nil (0)) (nil nil))):0}
      \sum_{K_0, L_0', L_0} \langle Th_{K_0\times L_0}, h_{K_0\times L_0'} \rangle \langle Th_{K_0\times L_0}, h_{K_0\times L_0'} \rangle.
    \end{equation}
    We put $a_{K_0, L_0, L_0'} = \langle T h_{K_0\times L_0}, h_{K_0\times L_0'} \rangle$ and note
    the estimate
    \begin{equation}\label{eq:y-est(((0 0 0 0) (0 1 0 1)) (((0) (1)) (nil (0)) (nil nil))):1}
      |a_{K_0, L_0, L_0'}|
      \leq \|T\| |K_0|^{1/{p}} |L_0|^{1/{q}} |K_0|^{1/{p'}} |L_0'|^{1/{q'}}.
    \end{equation}
    Now, we write~\eqref{eq:y-est(((0 0 0 0) (0 1 0 1)) (((0) (1)) (nil (0)) (nil nil))):0} as
    follows:
    \begin{equation}\label{eq:y-est(((0 0 0 0) (0 1 0 1)) (((0) (1)) (nil (0)) (nil nil))):2}
      \sum_{K_0, L_0'} \Bigl\langle T \sum_{L_0} a_{K_0, L_0, L_0'} h_{K_0\times L_0},  h_{K_0\times L_0'} \Bigr\rangle.
    \end{equation}
    By duality, we obtain the subsequent upper estimate for~\eqref{eq:y-est(((0 0 0 0) (0 1 0 1))
      (((0) (1)) (nil (0)) (nil nil))):2}:
    \begin{equation}\label{eq:y-est(((0 0 0 0) (0 1 0 1)) (((0) (1)) (nil (0)) (nil nil))):3}
      \sum_{K_0, L_0'}\left\|T \sum_{L_0} a_{K_0, L_0, L_0'} h_{K_0\times L_0}\right\|_{H^p(H^q)} \left\| h_{K_0\times L_0'}\right\|_{(H^p(H^q))^*}.
    \end{equation}
    Estimate~\eqref{eq:y-est(((0 0 0 0) (0 1 0 1)) (((0) (1)) (nil (0)) (nil nil))):1} and the
    disjointness of the dyadic intervals (see~\textrefp[J]{enu:j1}) yield
    \begin{equation}\label{eq:y-est(((0 0 0 0) (0 1 0 1)) (((0) (1)) (nil (0)) (nil nil))):4}
      \sum_{K_0, L_0'}\|T\| \left\| \sum_{L_0}\max_{L_0}\left(\|T\| |K_0|^{1/{p}} |L_0|^{1/{q}} |K_0|^{1/{p'}} |L_0'|^{1/{q'}}\right) h_{K_0\times L_0}\right\|_{H^p(H^q)}\left\| h_{K_0\times L_0'}\right\|_{(H^p(H^q))^*}.
    \end{equation}
    Consequently, we obtain
    \begin{equation}\label{eq:y-est(((0 0 0 0) (0 1 0 1)) (((0) (1)) (nil (0)) (nil nil))):5}
      \sum_{K_0, L_0'}\|T\| \max_{L_0}\left(\|T\| |K_0|^{1/{p}} |L_0|^{1/{q}} |K_0|^{1/{p'}} |L_0'|^{1/{q'}}\right) \left\| \sum_{L_0}h_{K_0\times L_0}\right\|_{H^p(H^q)}\left\| h_{K_0\times L_0'}\right\|_{(H^p(H^q))^*}.
    \end{equation}
    %%%%%%%%%%%%%%%% BEGIN MANUAL EDIT %%%%%%%%%%%%%%%%
    Thus,~\eqref{eq:y-est(((0 0 0 0) (0 1 0 1)) (((0) (1)) (nil (0)) (nil nil))):5} is bounded from
    above by
    \begin{equation*}
      \|T\|^2 \max_{L_0} |L_0|^{1/{q}} \sum_{K_0, L_0'} |K_0|^2 |L_0'|^{2/{q'}}.
    \end{equation*}
    Using Hölder's inequality yields
    \begin{equation}\label{eq:y-est(((0 0 0 0) (0 1 0 1)) (((0) (1)) (nil (0)) (nil nil))):6}
      \|T\|^2\max_{K_0, L_0, L_0'} |K_0| |L_0|^{1/{q}} |L_0'|^{2/{q'}-1}.
    \end{equation}
    Inserting $|K_0|, |L_0|, |L_0'| \leq \alpha$ (see~\eqref{eq:alpha-small})
    into~\eqref{eq:y-est(((0 0 0 0) (0 1 0 1)) (((0) (1)) (nil (0)) (nil nil))):6}, we obtain the
    estimate
    \begin{equation}\label{eq:y-est(((0 0 0 0) (0 1 0 1)) (((0) (1)) (nil (0)) (nil nil))):7}
      \|T\|^2 \alpha^{1+1/{q'}}.
    \end{equation}
    %%%%%%%%%%%%%%%% END MANUAL EDIT %%%%%%%%%%%%%%%%
  \end{proofcase}

  \begin{proofcase}[Case~\theproofcase, group~{\textrefp[b]{enu:proof:lem:var:Y:b:2}}: $K_0 = K_1 \neq K_0' = K_1'$, $L_0 = L_1 \neq L_0' = L_1'$ (((1) (1)) ((0) (0)) (nil nil)) -- left variant]\label{case:y-est(((0 1 0 1) (0 1 0 1)) (((1) (1)) ((0) (0)) (nil nil))):left}
    In this case, we have to estimate
    \begin{equation}\label{eq:y-est(((0 1 0 1) (0 1 0 1)) (((1) (1)) ((0) (0)) (nil nil))):0}
      \sum_{K_0', L_0', K_0, L_0} \langle Th_{K_0\times L_0}, h_{K_0'\times L_0'} \rangle \langle Th_{K_0\times L_0}, h_{K_0'\times L_0'} \rangle.
    \end{equation}
    We put $a_{K_0, K_0', L_0, L_0'} = \langle T h_{K_0\times L_0}, h_{K_0'\times L_0'} \rangle$ and
    note the estimate
    \begin{equation}\label{eq:y-est(((0 1 0 1) (0 1 0 1)) (((1) (1)) ((0) (0)) (nil nil))):1}
      |a_{K_0, K_0', L_0, L_0'}|
      \leq \|T\| |K_0|^{1/{p}} |L_0|^{1/{q}} |K_0'|^{1/{p'}} |L_0'|^{1/{q'}}.
    \end{equation}
    Now, we write~\eqref{eq:y-est(((0 1 0 1) (0 1 0 1)) (((1) (1)) ((0) (0)) (nil nil))):0} as
    follows:
    \begin{equation}\label{eq:y-est(((0 1 0 1) (0 1 0 1)) (((1) (1)) ((0) (0)) (nil nil))):2}
      \sum_{K_0', L_0'} \Bigl\langle T \sum_{K_0, L_0} a_{K_0, K_0', L_0, L_0'} h_{K_0\times L_0},  h_{K_0'\times L_0'} \Bigr\rangle.
    \end{equation}
    By duality, we obtain the subsequent upper estimate for~\eqref{eq:y-est(((0 1 0 1) (0 1 0 1))
      (((1) (1)) ((0) (0)) (nil nil))):2}:
    \begin{equation}\label{eq:y-est(((0 1 0 1) (0 1 0 1)) (((1) (1)) ((0) (0)) (nil nil))):3}
      \sum_{K_0', L_0'}\left\|T \sum_{K_0, L_0} a_{K_0, K_0', L_0, L_0'} h_{K_0\times L_0}\right\|_{H^p(H^q)} \left\| h_{K_0'\times L_0'}\right\|_{(H^p(H^q))^*}.
    \end{equation}
    Estimate~\eqref{eq:y-est(((0 1 0 1) (0 1 0 1)) (((1) (1)) ((0) (0)) (nil nil))):1} and the
    disjointness of the dyadic intervals (see~\textrefp[J]{enu:j1}) yield
    \begin{equation}\label{eq:y-est(((0 1 0 1) (0 1 0 1)) (((1) (1)) ((0) (0)) (nil nil))):4}
      \sum_{K_0', L_0'}\|T\| \left\| \sum_{K_0, L_0}\max_{K_0, L_0}\left(\|T\| |K_0|^{1/{p}} |L_0|^{1/{q}} |K_0'|^{1/{p'}} |L_0'|^{1/{q'}}\right) h_{K_0\times L_0}\right\|_{H^p(H^q)}\left\| h_{K_0'\times L_0'}\right\|_{(H^p(H^q))^*}.
    \end{equation}
    Consequently, we obtain
    \begin{equation}\label{eq:y-est(((0 1 0 1) (0 1 0 1)) (((1) (1)) ((0) (0)) (nil nil))):5}
      \sum_{K_0', L_0'}\|T\| \max_{K_0, L_0}\left(\|T\| |K_0|^{1/{p}} |L_0|^{1/{q}} |K_0'|^{1/{p'}} |L_0'|^{1/{q'}}\right) \left\| \sum_{K_0, L_0}h_{K_0\times L_0}\right\|_{H^p(H^q)}\left\| h_{K_0'\times L_0'}\right\|_{(H^p(H^q))^*}.
    \end{equation}
    %%%%%%%%%%%%%%%% BEGIN MANUAL EDIT %%%%%%%%%%%%%%%%
    Thus,~\eqref{eq:y-est(((0 1 0 1) (0 1 0 1)) (((1) (1)) ((0) (0)) (nil nil))):5} is bounded from
    above by
    \begin{equation*}
      \|T\|^2 \max_{K_0, L_0}  |K_0|^{1/{p}} |L_0|^{1/{q}} \sum_{K_0', L_0'}  |K_0'|^{2/{p'}} |L_0'|^{2/{q'}}.
    \end{equation*}
    Using Hölder's inequality yields
    \begin{equation}\label{eq:y-est(((0 1 0 1) (0 1 0 1)) (((1) (1)) ((0) (0)) (nil nil))):6}
      \|T\|^2 \max_{K_0, L_0, K_0', L_0'}  |K_0|^{1/{p}} |L_0|^{1/{q}} |K_0'|^{2/{p'}-1} |L_0'|^{2/{q'}-1}.
    \end{equation}
    Inserting $|K_0|, |K_0'|, |L_0|, |L_0'| \leq \alpha$ (see~\eqref{eq:alpha-small})
    into~\eqref{eq:y-est(((0 1 0 1) (0 1 0 1)) (((1) (1)) ((0) (0)) (nil nil))):6}, we obtain the
    estimate
    \begin{equation}\label{eq:y-est(((0 1 0 1) (0 1 0 1)) (((1) (1)) ((0) (0)) (nil nil))):7}
      \|T\|^2 \alpha^{1/{p'}+1/{q'}}.
    \end{equation}
    %%%%%%%%%%%%%%%% END MANUAL EDIT %%%%%%%%%%%%%%%%
  \end{proofcase}

  \begin{proofcase}[Case~\theproofcase, group~{\textrefp[b]{enu:proof:lem:var:Y:b:2}}: $K_0 = K_1 \neq K_0' = K_1'$, $L_0 = L_1 \neq L_0' = L_1'$ (((0) (0)) (nil nil) ((1) (1))) -- right variant]\label{case:y-est(((0 1 0 1) (0 1 0 1)) (((0) (0)) (nil nil) ((1) (1)))):right}
    In this case, we have to estimate
    \begin{equation}\label{eq:y-est(((0 1 0 1) (0 1 0 1)) (((0) (0)) (nil nil) ((1) (1)))):0}
      \sum_{K_0, L_0, K_0', L_0'} \langle Th_{K_0\times L_0}, h_{K_0'\times L_0'} \rangle \langle Th_{K_0\times L_0}, h_{K_0'\times L_0'} \rangle.
    \end{equation}
    We put $a_{K_0, K_0', L_0, L_0'} = \langle T h_{K_0\times L_0}, h_{K_0'\times L_0'} \rangle$ and
    note the estimate
    \begin{equation}\label{eq:y-est(((0 1 0 1) (0 1 0 1)) (((0) (0)) (nil nil) ((1) (1)))):1}
      |a_{K_0, K_0', L_0, L_0'}|
      \leq \|T\| |K_0|^{1/{p}} |L_0|^{1/{q}} |K_0'|^{1/{p'}} |L_0'|^{1/{q'}}.
    \end{equation}
    Now, we write~\eqref{eq:y-est(((0 1 0 1) (0 1 0 1)) (((0) (0)) (nil nil) ((1) (1)))):0} as
    follows:
    \begin{equation}\label{eq:y-est(((0 1 0 1) (0 1 0 1)) (((0) (0)) (nil nil) ((1) (1)))):2}
      \sum_{K_0, L_0} \Bigl\langle T  h_{K_0\times L_0}, \sum_{K_0', L_0'} a_{K_0, K_0', L_0, L_0'} h_{K_0'\times L_0'} \Bigr\rangle.
    \end{equation}
    By duality, we obtain the subsequent upper estimate for~\eqref{eq:y-est(((0 1 0 1) (0 1 0 1))
      (((0) (0)) (nil nil) ((1) (1)))):2}:
    \begin{equation}\label{eq:y-est(((0 1 0 1) (0 1 0 1)) (((0) (0)) (nil nil) ((1) (1)))):3}
      \sum_{K_0, L_0}\left\|T  h_{K_0\times L_0}\right\|_{H^p(H^q)} \left\|\sum_{K_0', L_0'} a_{K_0, K_0', L_0, L_0'} h_{K_0'\times L_0'}\right\|_{(H^p(H^q))^*}.
    \end{equation}
    Estimate~\eqref{eq:y-est(((0 1 0 1) (0 1 0 1)) (((0) (0)) (nil nil) ((1) (1)))):1} and the
    disjointness of the dyadic intervals (see~\textrefp[J]{enu:j1}) yield
    \begin{equation}\label{eq:y-est(((0 1 0 1) (0 1 0 1)) (((0) (0)) (nil nil) ((1) (1)))):4}
      \sum_{K_0, L_0}\|T\| \left\| h_{K_0\times L_0}\right\|_{H^p(H^q)}\left\| \sum_{K_0', L_0'}\max_{K_0', L_0'}\left(\|T\| |K_0|^{1/{p}} |L_0|^{1/{q}} |K_0'|^{1/{p'}} |L_0'|^{1/{q'}}\right) h_{K_0'\times L_0'}\right\|_{(H^p(H^q))^*}.
    \end{equation}
    Consequently, we obtain
    \begin{equation}\label{eq:y-est(((0 1 0 1) (0 1 0 1)) (((0) (0)) (nil nil) ((1) (1)))):5}
      \sum_{K_0, L_0}\|T\| \left\| h_{K_0\times L_0}\right\|_{H^p(H^q)}\max_{K_0', L_0'}\left(\|T\| |K_0|^{1/{p}} |L_0|^{1/{q}} |K_0'|^{1/{p'}} |L_0'|^{1/{q'}}\right) \left\| \sum_{K_0', L_0'}h_{K_0'\times L_0'}\right\|_{(H^p(H^q))^*}.
    \end{equation}
    %%%%%%%%%%%%%%%% BEGIN MANUAL EDIT %%%%%%%%%%%%%%%%
    Thus,~\eqref{eq:y-est(((0 1 0 1) (0 1 0 1)) (((0) (0)) (nil nil) ((1) (1)))):5} is bounded from
    above by
    \begin{equation*}
      \|T\|^2 \max_{K_0', L_0'} |K_0'|^{1/{p'}} |L_0'|^{1/{q'}} \sum_{K_0, L_0} |K_0|^{2/{p}} |L_0|^{2/{q}}.
    \end{equation*}
    Using Hölder's inequality yields
    \begin{equation}\label{eq:y-est(((0 1 0 1) (0 1 0 1)) (((0) (0)) (nil nil) ((1) (1)))):6}
      \|T\|^2 \max_{K_0, K_0', L_0, L_0'} |K_0|^{2/{p}-1} |L_0|^{2/{q}-1} |K_0'|^{1/{p'}} |L_0'|^{1/{q'}}.
    \end{equation}
    Inserting $|K_0|, |K_0'|, |L_0|, |L_0'| \leq \alpha$ (see~\eqref{eq:alpha-small})
    into~\eqref{eq:y-est(((0 1 0 1) (0 1 0 1)) (((0) (0)) (nil nil) ((1) (1)))):6}, we obtain the
    estimate
    \begin{equation}\label{eq:y-est(((0 1 0 1) (0 1 0 1)) (((0) (0)) (nil nil) ((1) (1)))):7}
      \|T\|^2 \alpha^{1/{p}+1/{q}}.
    \end{equation}
    %%%%%%%%%%%%%%%% END MANUAL EDIT %%%%%%%%%%%%%%%%
  \end{proofcase}

  %%%%%%%%%%%%%%%% BEGIN MANUAL EDIT %%%%%%%%%%%%%%%%
  \subsubsection*{Summary of \textref[Case~]{case:y-est(((0 1 0 1) (0 1 0 1)) (((1) (1)) ((0) (0))
      (nil nil))):left} and \textref[Case~]{case:y-est(((0 1 0 1) (0 1 0 1)) (((0) (0)) (nil nil)
      ((1) (1)))):right}}
  Combining~\eqref{eq:y-est(((0 1 0 1) (0 1 0 1)) (((1) (1)) ((0) (0)) (nil nil))):7}
  with~\eqref{eq:y-est(((0 1 0 1) (0 1 0 1)) (((0) (0)) (nil nil) ((1) (1)))):7} yields
  \begin{equation}\label{eq:y-est(((0 1 0 1) (0 1 0 1)) (((1) (1)) ((0) (0)) (nil nil)))(((0 1 0 1)
      (0 1 0 1)) (((0) (0)) (nil nil) ((1) (1))))-final}
    \cond_{\theta,\varepsilon} Y^2
    \leq \|T\|^2 \alpha.
  \end{equation}
  %%%%%%%%%%%%%%%% END MANUAL EDIT %%%%%%%%%%%%%%%%

  \begin{proofcase}[Case~\theproofcase, group~{\textrefp[b]{enu:proof:lem:var:Y:b:3}}: $K_0 = K_1' \neq K_0' = K_1$, $L_0 = L_1 \neq L_0' = L_1'$ (((0) (1)) ((1) (0)) (nil nil)) -- left variant]\label{case:y-est(((0 1 1 0) (0 1 0 1)) (((0) (1)) ((1) (0)) (nil nil))):left}
    In this case, we have to estimate
    \begin{equation}\label{eq:y-est(((0 1 1 0) (0 1 0 1)) (((0) (1)) ((1) (0)) (nil nil))):0}
      \sum_{K_0, L_0', K_0', L_0} \langle Th_{K_0\times L_0}, h_{K_0'\times L_0'} \rangle \langle Th_{K_0'\times L_0}, h_{K_0\times L_0'} \rangle.
    \end{equation}
    We put $a_{K_0, K_0', L_0, L_0'} = \langle T h_{K_0\times L_0}, h_{K_0'\times L_0'} \rangle$ and
    note the estimate
    \begin{equation}\label{eq:y-est(((0 1 1 0) (0 1 0 1)) (((0) (1)) ((1) (0)) (nil nil))):1}
      |a_{K_0, K_0', L_0, L_0'}|
      \leq \|T\| |K_0|^{1/{p}} |L_0|^{1/{q}} |K_0'|^{1/{p'}} |L_0'|^{1/{q'}}.
    \end{equation}
    Now, we write~\eqref{eq:y-est(((0 1 1 0) (0 1 0 1)) (((0) (1)) ((1) (0)) (nil nil))):0} as
    follows:
    \begin{equation}\label{eq:y-est(((0 1 1 0) (0 1 0 1)) (((0) (1)) ((1) (0)) (nil nil))):2}
      \sum_{K_0, L_0'} \Bigl\langle T \sum_{K_0', L_0} a_{K_0, K_0', L_0, L_0'} h_{K_0'\times L_0},  h_{K_0\times L_0'} \Bigr\rangle.
    \end{equation}
    By duality, we obtain the subsequent upper estimate for~\eqref{eq:y-est(((0 1 1 0) (0 1 0 1))
      (((0) (1)) ((1) (0)) (nil nil))):2}:
    \begin{equation}\label{eq:y-est(((0 1 1 0) (0 1 0 1)) (((0) (1)) ((1) (0)) (nil nil))):3}
      \sum_{K_0, L_0'}\left\|T \sum_{K_0', L_0} a_{K_0, K_0', L_0, L_0'} h_{K_0'\times L_0}\right\|_{H^p(H^q)} \left\| h_{K_0\times L_0'}\right\|_{(H^p(H^q))^*}.
    \end{equation}
    Estimate~\eqref{eq:y-est(((0 1 1 0) (0 1 0 1)) (((0) (1)) ((1) (0)) (nil nil))):1} and the
    disjointness of the dyadic intervals (see~\textrefp[J]{enu:j1}) yield
    \begin{equation}\label{eq:y-est(((0 1 1 0) (0 1 0 1)) (((0) (1)) ((1) (0)) (nil nil))):4}
      \sum_{K_0, L_0'}\|T\| \left\| \sum_{K_0', L_0}\max_{K_0', L_0}\left(\|T\| |K_0|^{1/{p}} |L_0|^{1/{q}} |K_0'|^{1/{p'}} |L_0'|^{1/{q'}}\right) h_{K_0'\times L_0}\right\|_{H^p(H^q)}\left\| h_{K_0\times L_0'}\right\|_{(H^p(H^q))^*}.
    \end{equation}
    Consequently, we obtain
    \begin{equation}\label{eq:y-est(((0 1 1 0) (0 1 0 1)) (((0) (1)) ((1) (0)) (nil nil))):5}
      \sum_{K_0, L_0'}\|T\| \max_{K_0', L_0}\left(\|T\| |K_0|^{1/{p}} |L_0|^{1/{q}} |K_0'|^{1/{p'}} |L_0'|^{1/{q'}}\right) \left\| \sum_{K_0', L_0}h_{K_0'\times L_0}\right\|_{H^p(H^q)}\left\| h_{K_0\times L_0'}\right\|_{(H^p(H^q))^*}.
    \end{equation}
    %%%%%%%%%%%%%%%% BEGIN MANUAL EDIT %%%%%%%%%%%%%%%%
    Thus,~\eqref{eq:y-est(((0 1 1 0) (0 1 0 1)) (((0) (1)) ((1) (0)) (nil nil))):5} is bounded from
    above by
    \begin{equation*}
      \|T\|^2 \max_{K_0', L_0} |L_0|^{1/{q}} |K_0'|^{1/{p'}} \sum_{K_0, L_0'} |K_0| |L_0'|^{2/{q'}}.
    \end{equation*}
    Using Hölder's inequality yields
    \begin{equation}\label{eq:y-est(((0 1 1 0) (0 1 0 1)) (((0) (1)) ((1) (0)) (nil nil))):6}
      \|T\|^2 \max_{K_0', L_0, L_0'} |L_0|^{1/{q}} |K_0'|^{1/{p'}} |L_0'|^{2/{q'}-1}.
    \end{equation}
    Inserting $|K_0'|, |L_0|, |L_0'| \leq \alpha$ (see~\eqref{eq:alpha-small})
    into~\eqref{eq:y-est(((0 1 1 0) (0 1 0 1)) (((0) (1)) ((1) (0)) (nil nil))):6}, we obtain the
    estimate
    \begin{equation}\label{eq:y-est(((0 1 1 0) (0 1 0 1)) (((0) (1)) ((1) (0)) (nil nil))):7}
      \|T\|^2 \alpha^{1/{p'}+1/{q'}}.
    \end{equation}
    %%%%%%%%%%%%%%%% END MANUAL EDIT %%%%%%%%%%%%%%%%
  \end{proofcase}

  \begin{proofcase}[Case~\theproofcase, group~{\textrefp[b]{enu:proof:lem:var:Y:b:3}}: $K_0 = K_1' \neq K_0' = K_1$, $L_0 = L_1 \neq L_0' = L_1'$ (((1) (0)) (nil nil) ((0) (1))) -- right variant]\label{case:y-est(((0 1 1 0) (0 1 0 1)) (((1) (0)) (nil nil) ((0) (1)))):right}
    In this case, we have to estimate
    \begin{equation}\label{eq:y-est(((0 1 1 0) (0 1 0 1)) (((1) (0)) (nil nil) ((0) (1)))):0}
      \sum_{K_0', L_0, K_0, L_0'} \langle Th_{K_0\times L_0}, h_{K_0'\times L_0'} \rangle \langle Th_{K_0'\times L_0}, h_{K_0\times L_0'} \rangle.
    \end{equation}
    We put $a_{K_0, K_0', L_0, L_0'} = \langle T h_{K_0\times L_0}, h_{K_0'\times L_0'} \rangle$ and
    note the estimate
    \begin{equation}\label{eq:y-est(((0 1 1 0) (0 1 0 1)) (((1) (0)) (nil nil) ((0) (1)))):1}
      |a_{K_0, K_0', L_0, L_0'}|
      \leq \|T\| |K_0|^{1/{p}} |L_0|^{1/{q}} |K_0'|^{1/{p'}} |L_0'|^{1/{q'}}.
    \end{equation}
    Now, we write~\eqref{eq:y-est(((0 1 1 0) (0 1 0 1)) (((1) (0)) (nil nil) ((0) (1)))):0} as
    follows:
    \begin{equation}\label{eq:y-est(((0 1 1 0) (0 1 0 1)) (((1) (0)) (nil nil) ((0) (1)))):2}
      \sum_{K_0', L_0} \Bigl\langle T  h_{K_0'\times L_0}, \sum_{K_0, L_0'} a_{K_0, K_0', L_0, L_0'} h_{K_0\times L_0'} \Bigr\rangle.
    \end{equation}
    By duality, we obtain the subsequent upper estimate for~\eqref{eq:y-est(((0 1 1 0) (0 1 0 1))
      (((1) (0)) (nil nil) ((0) (1)))):2}:
    \begin{equation}\label{eq:y-est(((0 1 1 0) (0 1 0 1)) (((1) (0)) (nil nil) ((0) (1)))):3}
      \sum_{K_0', L_0}\left\|T  h_{K_0'\times L_0}\right\|_{H^p(H^q)} \left\|\sum_{K_0, L_0'} a_{K_0, K_0', L_0, L_0'} h_{K_0\times L_0'}\right\|_{(H^p(H^q))^*}.
    \end{equation}
    Estimate~\eqref{eq:y-est(((0 1 1 0) (0 1 0 1)) (((1) (0)) (nil nil) ((0) (1)))):1} and the
    disjointness of the dyadic intervals (see~\textrefp[J]{enu:j1}) yield
    \begin{equation}\label{eq:y-est(((0 1 1 0) (0 1 0 1)) (((1) (0)) (nil nil) ((0) (1)))):4}
      \sum_{K_0', L_0}\|T\| \left\| h_{K_0'\times L_0}\right\|_{H^p(H^q)}\left\| \sum_{K_0, L_0'}\max_{K_0, L_0'}\left(\|T\| |K_0|^{1/{p}} |L_0|^{1/{q}} |K_0'|^{1/{p'}} |L_0'|^{1/{q'}}\right) h_{K_0\times L_0'}\right\|_{(H^p(H^q))^*}.
    \end{equation}
    Consequently, we obtain
    \begin{equation}\label{eq:y-est(((0 1 1 0) (0 1 0 1)) (((1) (0)) (nil nil) ((0) (1)))):5}
      \sum_{K_0', L_0}\|T\| \left\| h_{K_0'\times L_0}\right\|_{H^p(H^q)}\max_{K_0, L_0'}\left(\|T\| |K_0|^{1/{p}} |L_0|^{1/{q}} |K_0'|^{1/{p'}} |L_0'|^{1/{q'}}\right) \left\| \sum_{K_0, L_0'}h_{K_0\times L_0'}\right\|_{(H^p(H^q))^*}.
    \end{equation}
    %%%%%%%%%%%%%%%% BEGIN MANUAL EDIT %%%%%%%%%%%%%%%%
    Thus,~\eqref{eq:y-est(((0 1 1 0) (0 1 0 1)) (((1) (0)) (nil nil) ((0) (1)))):5} is bounded from
    above by
    \begin{equation*}
      \|T\|^2 \max_{K_0, L_0'} |K_0|^{1/{p}} |L_0'|^{1/{q'}} \sum_{K_0', L_0} |L_0|^{2/{q}} |K_0'|.
    \end{equation*}
    Using Hölder's inequality yields
    \begin{equation}\label{eq:y-est(((0 1 1 0) (0 1 0 1)) (((1) (0)) (nil nil) ((0) (1)))):6}
      \|T\|^2 \max_{K_0, L_0, L_0'} |K_0|^{1/{p}} |L_0'|^{1/{q'}} |L_0|^{2/{q}-1}.
    \end{equation}
    Inserting $|K_0|, |L_0|, |L_0'| \leq \alpha$ (see~\eqref{eq:alpha-small})
    into~\eqref{eq:y-est(((0 1 1 0) (0 1 0 1)) (((1) (0)) (nil nil) ((0) (1)))):6}, we obtain the
    estimate
    \begin{equation}\label{eq:y-est(((0 1 1 0) (0 1 0 1)) (((1) (0)) (nil nil) ((0) (1)))):7}
      \|T\|^2 \alpha^{1/{p}+1/{q}}.
    \end{equation}
    %%%%%%%%%%%%%%%% END MANUAL EDIT %%%%%%%%%%%%%%%%
  \end{proofcase}

  %%%%%%%%%%%%%%%% BEGIN MANUAL EDIT %%%%%%%%%%%%%%%%
  \subsubsection*{Summary of \textref[Case~]{case:y-est(((0 1 1 0) (0 1 0 1)) (((0) (1)) ((1) (0))
      (nil nil))):left} and \textref[Case~]{case:y-est(((0 1 1 0) (0 1 0 1)) (((1) (0)) (nil nil)
      ((0) (1)))):right}}
  Combining~\eqref{eq:y-est(((0 1 1 0) (0 1 0 1)) (((0) (1)) ((1) (0)) (nil nil))):7}
  with~\eqref{eq:y-est(((0 1 1 0) (0 1 0 1)) (((1) (0)) (nil nil) ((0) (1)))):7} yields
  \begin{equation}\label{eq:y-est(((0 1 1 0) (0 1 0 1)) (((0) (1)) ((1) (0)) (nil nil)))(((0 1 1 0)
      (0 1 0 1)) (((1) (0)) (nil nil) ((0) (1))))-final}
    \cond_{\theta,\varepsilon} Y^2
    \leq \|T\|^2 \alpha.
  \end{equation}
  %%%%%%%%%%%%%%%% END MANUAL EDIT %%%%%%%%%%%%%%%%

  \begin{proofcase}[Case~\theproofcase, group~{\textrefp[b]{enu:proof:lem:var:Y:b:4}}: $K_0 = K_0' \neq K_1 = K_1'$, $L_0 = L_1 \neq L_0' = L_1'$ (((2) (1)) ((0) (0)) (nil nil)) -- left variant]\label{case:y-est(((0 0 2 2) (0 1 0 1)) (((2) (1)) ((0) (0)) (nil nil))):left}
    In this case, we have to estimate
    \begin{equation}\label{eq:y-est(((0 0 2 2) (0 1 0 1)) (((2) (1)) ((0) (0)) (nil nil))):0}
      \sum_{K_1, L_0', K_0, L_0} \langle Th_{K_0\times L_0}, h_{K_0\times L_0'} \rangle \langle Th_{K_1\times L_0}, h_{K_1\times L_0'} \rangle.
    \end{equation}
    We put $a_{K_0, L_0, L_0'} = \langle T h_{K_0\times L_0}, h_{K_0\times L_0'} \rangle$ and note
    the estimate
    \begin{equation}\label{eq:y-est(((0 0 2 2) (0 1 0 1)) (((2) (1)) ((0) (0)) (nil nil))):1}
      |a_{K_0, L_0, L_0'}|
      \leq \|T\| |K_0|^{1/{p}} |L_0|^{1/{q}} |K_0|^{1/{p'}} |L_0'|^{1/{q'}}.
    \end{equation}
    Now, we write~\eqref{eq:y-est(((0 0 2 2) (0 1 0 1)) (((2) (1)) ((0) (0)) (nil nil))):0} as
    follows:
    \begin{equation}\label{eq:y-est(((0 0 2 2) (0 1 0 1)) (((2) (1)) ((0) (0)) (nil nil))):2}
      \sum_{K_1, L_0'} \Bigl\langle T \sum_{K_0, L_0} a_{K_0, L_0, L_0'} h_{K_1\times L_0},  h_{K_1\times L_0'} \Bigr\rangle.
    \end{equation}
    By duality, we obtain the subsequent upper estimate for~\eqref{eq:y-est(((0 0 2 2) (0 1 0 1))
      (((2) (1)) ((0) (0)) (nil nil))):2}:
    \begin{equation}\label{eq:y-est(((0 0 2 2) (0 1 0 1)) (((2) (1)) ((0) (0)) (nil nil))):3}
      \sum_{K_1, L_0'}\left\|T \sum_{K_0, L_0} a_{K_0, L_0, L_0'} h_{K_1\times L_0}\right\|_{H^p(H^q)} \left\| h_{K_1\times L_0'}\right\|_{(H^p(H^q))^*}.
    \end{equation}
    Estimate~\eqref{eq:y-est(((0 0 2 2) (0 1 0 1)) (((2) (1)) ((0) (0)) (nil nil))):1} and the
    disjointness of the dyadic intervals (see~\textrefp[J]{enu:j1}) yield
    \begin{equation}\label{eq:y-est(((0 0 2 2) (0 1 0 1)) (((2) (1)) ((0) (0)) (nil nil))):4}
      \sum_{K_1, L_0'}\|T\| \left\| \sum_{K_0, L_0}\max_{L_0}\left(\|T\| |K_0|^{1/{p}} |L_0|^{1/{q}} |K_0|^{1/{p'}} |L_0'|^{1/{q'}}\right) h_{K_1\times L_0}\right\|_{H^p(H^q)}\left\| h_{K_1\times L_0'}\right\|_{(H^p(H^q))^*}.
    \end{equation}
    Consequently, we obtain
    \begin{equation}\label{eq:y-est(((0 0 2 2) (0 1 0 1)) (((2) (1)) ((0) (0)) (nil nil))):5}
      \sum_{K_1, L_0'}\|T\| \sum_{K_0}\max_{L_0}\left(\|T\| |K_0|^{1/{p}} |L_0|^{1/{q}} |K_0|^{1/{p'}} |L_0'|^{1/{q'}}\right) \left\| \sum_{L_0}h_{K_1\times L_0}\right\|_{H^p(H^q)}\left\| h_{K_1\times L_0'}\right\|_{(H^p(H^q))^*}.
    \end{equation}
    %%%%%%%%%%%%%%%% BEGIN MANUAL EDIT %%%%%%%%%%%%%%%%
    Thus,~\eqref{eq:y-est(((0 0 2 2) (0 1 0 1)) (((2) (1)) ((0) (0)) (nil nil))):5} is bounded from
    above by
    \begin{equation*}
      \|T\|^2 \max_{L_0}  |L_0|^{1/{q}} \sum_{K_1, L_0'} \sum_{K_0}|K_0| |L_0'|^{2/{q'}} |K_1|.
    \end{equation*}
    Using Hölder's inequality yields
    \begin{equation}\label{eq:y-est(((0 0 2 2) (0 1 0 1)) (((2) (1)) ((0) (0)) (nil nil))):6}
      \|T\|^2 \max_{L_0, L_0'}  |L_0|^{1/{q}}|L_0'|^{2/{q'}-1}.
    \end{equation}
    Inserting $|L_0|, |L_0'| \leq \alpha$ (see~\eqref{eq:alpha-small}) into~\eqref{eq:y-est(((0 0 2
      2) (0 1 0 1)) (((2) (1)) ((0) (0)) (nil nil))):6}, we obtain the estimate
    \begin{equation}\label{eq:y-est(((0 0 2 2) (0 1 0 1)) (((2) (1)) ((0) (0)) (nil nil))):7}
      \|T\|^2 \alpha^{1/{q'}}.
    \end{equation}
    %%%%%%%%%%%%%%%% END MANUAL EDIT %%%%%%%%%%%%%%%%
  \end{proofcase}

  \begin{proofcase}[Case~\theproofcase, group~{\textrefp[b]{enu:proof:lem:var:Y:b:4}}: $K_0 = K_0' \neq K_1 = K_1'$, $L_0 = L_1 \neq L_0' = L_1'$ (((2) (0)) (nil nil) ((0) (1))) -- right variant]\label{case:y-est(((0 0 2 2) (0 1 0 1)) (((2) (0)) (nil nil) ((0) (1)))):right}
    In this case, we have to estimate
    \begin{equation}\label{eq:y-est(((0 0 2 2) (0 1 0 1)) (((2) (0)) (nil nil) ((0) (1)))):0}
      \sum_{K_1, L_0, K_0, L_0'} \langle Th_{K_0\times L_0}, h_{K_0\times L_0'} \rangle \langle Th_{K_1\times L_0}, h_{K_1\times L_0'} \rangle.
    \end{equation}
    We put $a_{K_0, L_0, L_0'} = \langle T h_{K_0\times L_0}, h_{K_0\times L_0'} \rangle$ and note
    the estimate
    \begin{equation}\label{eq:y-est(((0 0 2 2) (0 1 0 1)) (((2) (0)) (nil nil) ((0) (1)))):1}
      |a_{K_0, L_0, L_0'}|
      \leq \|T\| |K_0|^{1/{p}} |L_0|^{1/{q}} |K_0|^{1/{p'}} |L_0'|^{1/{q'}}.
    \end{equation}
    Now, we write~\eqref{eq:y-est(((0 0 2 2) (0 1 0 1)) (((2) (0)) (nil nil) ((0) (1)))):0} as
    follows:
    \begin{equation}\label{eq:y-est(((0 0 2 2) (0 1 0 1)) (((2) (0)) (nil nil) ((0) (1)))):2}
      \sum_{K_1, L_0} \Bigl\langle T  h_{K_1\times L_0}, \sum_{K_0, L_0'} a_{K_0, L_0, L_0'} h_{K_1\times L_0'} \Bigr\rangle.
    \end{equation}
    By duality, we obtain the subsequent upper estimate for~\eqref{eq:y-est(((0 0 2 2) (0 1 0 1))
      (((2) (0)) (nil nil) ((0) (1)))):2}:
    \begin{equation}\label{eq:y-est(((0 0 2 2) (0 1 0 1)) (((2) (0)) (nil nil) ((0) (1)))):3}
      \sum_{K_1, L_0}\left\|T  h_{K_1\times L_0}\right\|_{H^p(H^q)} \left\|\sum_{K_0, L_0'} a_{K_0, L_0, L_0'} h_{K_1\times L_0'}\right\|_{(H^p(H^q))^*}.
    \end{equation}
    Estimate~\eqref{eq:y-est(((0 0 2 2) (0 1 0 1)) (((2) (0)) (nil nil) ((0) (1)))):1} and the
    disjointness of the dyadic intervals (see~\textrefp[J]{enu:j1}) yield
    \begin{equation}\label{eq:y-est(((0 0 2 2) (0 1 0 1)) (((2) (0)) (nil nil) ((0) (1)))):4}
      \sum_{K_1, L_0}\|T\| \left\| h_{K_1\times L_0}\right\|_{H^p(H^q)}\left\| \sum_{K_0, L_0'}\max_{L_0'}\left(\|T\| |K_0|^{1/{p}} |L_0|^{1/{q}} |K_0|^{1/{p'}} |L_0'|^{1/{q'}}\right) h_{K_1\times L_0'}\right\|_{(H^p(H^q))^*}.
    \end{equation}
    Consequently, we obtain
    \begin{equation}\label{eq:y-est(((0 0 2 2) (0 1 0 1)) (((2) (0)) (nil nil) ((0) (1)))):5}
      \sum_{K_1, L_0}\|T\| \left\| h_{K_1\times L_0}\right\|_{H^p(H^q)}\sum_{K_0}\max_{L_0'}\left(\|T\| |K_0|^{1/{p}} |L_0|^{1/{q}} |K_0|^{1/{p'}} |L_0'|^{1/{q'}}\right) \left\| \sum_{L_0'}h_{K_1\times L_0'}\right\|_{(H^p(H^q))^*}.
    \end{equation}
    %%%%%%%%%%%%%%%% BEGIN MANUAL EDIT %%%%%%%%%%%%%%%%
    Thus,~\eqref{eq:y-est(((0 0 2 2) (0 1 0 1)) (((2) (0)) (nil nil) ((0) (1)))):5} is bounded from
    above by
    \begin{equation*}
      \|T\|^2 \max_{L_0'} |L_0'|^{1/{q'}} \sum_{K_1, L_0} \sum_{K_0} |K_0| |K_1| |L_0|^{2/{q}}.
    \end{equation*}
    Using Hölder's inequality yields
    \begin{equation}\label{eq:y-est(((0 0 2 2) (0 1 0 1)) (((2) (0)) (nil nil) ((0) (1)))):6}
      \|T\|^2 \max_{L_0, L_0'} |L_0'|^{1/{q'}} |L_0|^{2/{q}-1}.
    \end{equation}
    Inserting $|L_0|, |L_0'| \leq \alpha$ (see~\eqref{eq:alpha-small}) into~\eqref{eq:y-est(((0 0 2
      2) (0 1 0 1)) (((2) (0)) (nil nil) ((0) (1)))):6}, we obtain the estimate
    \begin{equation}\label{eq:y-est(((0 0 2 2) (0 1 0 1)) (((2) (0)) (nil nil) ((0) (1)))):7}
      \|T\|^2 \alpha^{1/{q}}.
    \end{equation}
    %%%%%%%%%%%%%%%% END MANUAL EDIT %%%%%%%%%%%%%%%%
  \end{proofcase}

  %%%%%%%%%%%%%%%% BEGIN MANUAL EDIT %%%%%%%%%%%%%%%%
  \subsubsection*{Summary of \textref[Case~]{case:y-est(((0 0 2 2) (0 1 0 1)) (((2) (1)) ((0) (0))
      (nil nil))):left} and \textref[Case~]{case:y-est(((0 0 2 2) (0 1 0 1)) (((2) (0)) (nil nil)
      ((0) (1)))):right}}
  Combining~\eqref{eq:y-est(((0 0 2 2) (0 1 0 1)) (((2) (1)) ((0) (0)) (nil nil))):7}
  with~\eqref{eq:y-est(((0 0 2 2) (0 1 0 1)) (((2) (0)) (nil nil) ((0) (1)))):7} yields
  \begin{equation}\label{eq:y-est(((0 0 2 2) (0 1 0 1)) (((2) (1)) ((0) (0)) (nil nil)))(((0 0 2 2)
      (0 1 0 1)) (((2) (0)) (nil nil) ((0) (1))))-final}
    \cond_{\theta,\varepsilon} Y^2
    \leq \|T\|^2 \alpha^{1/2}.
  \end{equation}
  %%%%%%%%%%%%%%%% END MANUAL EDIT %%%%%%%%%%%%%%%%

  %%%%%%%%%%%%%%%% BEGIN MANUAL EDIT %%%%%%%%%%%%%%%%
  \subsubsection*{Summary for $Y$}
  Combining~\eqref{eq:y-est(((0 0 0 0) (0 1 0 1)) (((0) (1)) (nil (0)) (nil nil))):7}
  with~\eqref{eq:y-est(((0 1 0 1) (0 1 0 1)) (((1) (1)) ((0) (0)) (nil nil)))(((0 1 0 1) (0 1 0 1))
    (((0) (0)) (nil nil) ((1) (1))))-final}, \eqref{eq:y-est(((0 1 1 0) (0 1 0 1)) (((0) (1)) ((1)
    (0)) (nil nil)))(((0 1 1 0) (0 1 0 1)) (((1) (0)) (nil nil) ((0) (1))))-final}
  and~\eqref{eq:y-est(((0 0 2 2) (0 1 0 1)) (((2) (1)) ((0) (0)) (nil nil)))(((0 0 2 2) (0 1 0 1))
    (((2) (0)) (nil nil) ((0) (1))))-final} yields
  \begin{equation}\label{eq:y-est:final}
    \cond_{\theta,\varepsilon} Y^2
    \leq 4 \|T\|^2 \alpha^{1/2}.
  \end{equation}
  %%%%%%%%%%%%%%%% END MANUAL EDIT %%%%%%%%%%%%%%%%
\end{myproof}

\subsection{Estimates for $Z$}

In this case, the following variables will always be summed over the following sets:
\begin{itemize}
\item%\label{enu:proof:lem:var:w-sum:1}
  $K_0$, $K_1$, $K_0'$, $K_1'$ over $\mathcal{X}_I$,
\item%\label{enu:proof:lem:var:w-sum:2}
  $L_0$, $L_1$, $L_0'$, $L_1'$ over $\mathcal{Y}_J$,
\end{itemize}
such that
\begin{equation}\label{eq:proof:lem:var:constr}
  \bigl(K_0\neq K_0'
  \quad\text{or}\quad
  L_0\neq L_0'\bigr)
  \qquad\text{and}\qquad
  \bigl(K_1\neq K_1'
  \quad\text{or}\quad
  L_1\neq L_1'\bigr).
\end{equation}

\begin{myproof}
  Note that by~\eqref{eq:dfn:rv:Z} and~\eqref{eq:b:dfn}, we obtain $Z^2(\theta,\varepsilon)$ is
  given by
  \begin{equation}\label{eq:proof:lem:var:Z}
    \sum_{\substack{K_0,K_1,K_0',K_1'\\L_0,L_1,L_0',L_1'}}
    \theta_{K_0}\theta_{K_1}\theta_{K_0'}\theta_{K_1'}
    \varepsilon_{L_0}\varepsilon_{L_1}\varepsilon_{L_0'}\varepsilon_{L_1'}
    \langle T h_{K_0\times L_0}, h_{K_0'\times L_0'}\rangle
    \langle T h_{K_1\times L_1}, h_{K_1'\times L_1'}\rangle
  \end{equation}
  Hence, in view of~\textrefp[R]{enu:proof:lem:var:r1}--\textrefp[R]{enu:proof:lem:var:r4}, we
  decompose the index set in~\eqref{eq:proof:lem:var:Y} into the following fifteen groups:
  \begin{enumerate}[({d}1)]
  \item\label{enu:proof:lem:var:Z:d:1} $K_0 = K_1 = K_0' = K_1'$ and $L_0 = L_1 \neq L_0' = L_1'$;
  \item\label{enu:proof:lem:var:Z:d:2} $K_0 = K_1 = K_0' = K_1'$ and $L_0 = L_1' \neq L_0' = L_1$;
  \item\label{enu:proof:lem:var:Z:d:3} $K_0 = K_1 = K_0' = K_1'$ and $L_0 = L_0' \neq L_1 = L_1'$
    (excluded by~\eqref{eq:proof:lem:var:constr});
  \end{enumerate}
  \begin{enumerate}[({e}1)]
  \item\label{enu:proof:lem:var:Z:e:1} $K_0 = K_1 \neq K_0' = K_1'$ and $L_0 = L_1 = L_0' = L_1'$;
  \item\label{enu:proof:lem:var:Z:e:2} $K_0 = K_1' \neq K_0' = K_1$ and $L_0 = L_1 = L_0' = L_1'$;
  \item\label{enu:proof:lem:var:Z:e:3} $K_0 = K_0' \neq K_1 = K_1'$ and $L_0 = L_1 = L_0' = L_1'$
    (excluded by~\eqref{eq:proof:lem:var:constr});
  \end{enumerate}
    
  \begin{enumerate}[({f}1)]
  \item\label{enu:proof:lem:var:Z:f:1} $K_0 = K_1 \neq K_0' = K_1'$ and
    $L_0 = L_1 \neq L_0' = L_1'$;
  \item\label{enu:proof:lem:var:Z:f:2} $K_0 = K_1 \neq K_0' = K_1'$ and
    $L_0 = L_1' \neq L_0' = L_1$;
  \item\label{enu:proof:lem:var:Z:f:3} $K_0 = K_1 \neq K_0' = K_1'$ and
    $L_0 = L_0' \neq L_1 = L_1'$;
  \item\label{enu:proof:lem:var:Z:f:4} $K_0 = K_1' \neq K_0' = K_1$ and
    $L_0 = L_1 \neq L_0' = L_1'$;
  \item\label{enu:proof:lem:var:Z:f:5} $K_0 = K_1' \neq K_0' = K_1$ and
    $L_0 = L_1' \neq L_0' = L_1$;
  \item\label{enu:proof:lem:var:Z:f:6} $K_0 = K_1' \neq K_0' = K_1$ and
    $L_0 = L_0' \neq L_1 = L_1'$;
  \item\label{enu:proof:lem:var:Z:f:7} $K_0 = K_0' \neq K_1 = K_1'$ and
    $L_0 = L_1 \neq L_0' = L_1'$;
  \item\label{enu:proof:lem:var:Z:f:8} $K_0 = K_0' \neq K_1 = K_1'$ and
    $L_0 = L_1' \neq L_0' = L_1$;
  \item\label{enu:proof:lem:var:Z:f:9} $K_0 = K_0' \neq K_1 = K_1'$ and $L_0 = L_0' \neq L_1 = L_1'$
    (excluded by~\eqref{eq:proof:lem:var:constr}).
  \end{enumerate}
  As we indicated above, the
  cases~\textrefp[d]{enu:proof:lem:var:Z:d:1},~\textrefp[e]{enu:proof:lem:var:Z:e:1}
  and~\textrefp[f]{enu:proof:lem:var:Z:f:1} are contradicting the
  constraint~\eqref{eq:proof:lem:var:constr}, and are thereby excluded.

  \begin{proofcase}[Case~\theproofcase, group~{\textrefp[d]{enu:proof:lem:var:Z:d:1}}: $K_0 = K_0' = K_1 = K_1'$, $L_0 = L_1 \neq L_0' = L_1'$
    (((0) (1)) (nil (0)) (nil nil)) -- left variant]\label{case:z-est(((0 0 0 0) (0 1 0 1)) (((0)
      (1)) (nil (0)) (nil nil))):left}
    In this case, we have to estimate
    \begin{equation}\label{eq:z-est(((0 0 0 0) (0 1 0 1)) (((0) (1)) (nil (0)) (nil nil))):0}
      \sum_{K_0, L_0', L_0} \langle Th_{K_0\times L_0}, h_{K_0\times L_0'} \rangle \langle Th_{K_0\times L_0}, h_{K_0\times L_0'} \rangle.
    \end{equation}
    We put $a_{K_0, L_0, L_0'} = \langle T h_{K_0\times L_0}, h_{K_0\times L_0'} \rangle$ and note
    the estimate
    \begin{equation}\label{eq:z-est(((0 0 0 0) (0 1 0 1)) (((0) (1)) (nil (0)) (nil nil))):1}
      |a_{K_0, L_0, L_0'}|
      \leq \|T\| |K_0|^{1/{p}} |L_0|^{1/{q}} |K_0|^{1/{p'}} |L_0'|^{1/{q'}}.
    \end{equation}
    Now, we write~\eqref{eq:z-est(((0 0 0 0) (0 1 0 1)) (((0) (1)) (nil (0)) (nil nil))):0} as
    follows:
    \begin{equation}\label{eq:z-est(((0 0 0 0) (0 1 0 1)) (((0) (1)) (nil (0)) (nil nil))):2}
      \sum_{K_0, L_0'} \Bigl\langle T \sum_{L_0} a_{K_0, L_0, L_0'} h_{K_0\times L_0},  h_{K_0\times L_0'} \Bigr\rangle.
    \end{equation}
    By duality, we obtain the subsequent upper estimate for~\eqref{eq:z-est(((0 0 0 0) (0 1 0 1))
      (((0) (1)) (nil (0)) (nil nil))):2}:
    \begin{equation}\label{eq:z-est(((0 0 0 0) (0 1 0 1)) (((0) (1)) (nil (0)) (nil nil))):3}
      \sum_{K_0, L_0'}\left\|T \sum_{L_0} a_{K_0, L_0, L_0'} h_{K_0\times L_0}\right\|_{H^p(H^q)} \left\| h_{K_0\times L_0'}\right\|_{(H^p(H^q))^*}.
    \end{equation}
    Estimate~\eqref{eq:z-est(((0 0 0 0) (0 1 0 1)) (((0) (1)) (nil (0)) (nil nil))):1} and the
    disjointness of the dyadic intervals (see~\textrefp[J]{enu:j1}) yield
    \begin{equation}\label{eq:z-est(((0 0 0 0) (0 1 0 1)) (((0) (1)) (nil (0)) (nil nil))):4}
      \sum_{K_0, L_0'}\|T\| \left\| \sum_{L_0}\max_{L_0}\left(\|T\| |K_0|^{1/{p}} |L_0|^{1/{q}} |K_0|^{1/{p'}} |L_0'|^{1/{q'}}\right) h_{K_0\times L_0}\right\|_{H^p(H^q)}\left\| h_{K_0\times L_0'}\right\|_{(H^p(H^q))^*}.
    \end{equation}
    Consequently, we obtain
    \begin{equation}\label{eq:z-est(((0 0 0 0) (0 1 0 1)) (((0) (1)) (nil (0)) (nil nil))):5}
      \sum_{K_0, L_0'}\|T\| \max_{L_0}\left(\|T\| |K_0|^{1/{p}} |L_0|^{1/{q}} |K_0|^{1/{p'}} |L_0'|^{1/{q'}}\right) \left\| \sum_{L_0}h_{K_0\times L_0}\right\|_{H^p(H^q)}\left\| h_{K_0\times L_0'}\right\|_{(H^p(H^q))^*}.
    \end{equation}
    %%%%%%%%%%%%%%%% BEGIN MANUAL EDIT %%%%%%%%%%%%%%%%
    Thus,~\eqref{eq:z-est(((0 0 0 0) (0 1 0 1)) (((0) (1)) (nil (0)) (nil nil))):5} is bounded from
    above by
    \begin{equation*}
      \|T\|^2 \max_{L_0} |L_0|^{1/{q}} \sum_{K_0, L_0'} |K_0|^2 |L_0'|^{2/{q'}}.
    \end{equation*}
    Using Hölder's inequality yields
    \begin{equation}\label{eq:z-est(((0 0 0 0) (0 1 0 1)) (((0) (1)) (nil (0)) (nil nil))):6}
      \|T\|^2 \max_{K_0, L_0, L_0'} |K_0| |L_0|^{1/{q}} |L_0'|^{2/{q'}-1}.
    \end{equation}
    Inserting $|K_0|, |L_0|, |L_0'| \leq \alpha$ (see~\eqref{eq:alpha-small})
    into~\eqref{eq:z-est(((0 0 0 0) (0 1 0 1)) (((0) (1)) (nil (0)) (nil nil))):6}, we obtain the
    estimate
    \begin{equation}\label{eq:z-est(((0 0 0 0) (0 1 0 1)) (((0) (1)) (nil (0)) (nil nil))):7}
      \|T\|^2 \alpha^{1+1/{q'}}.
    \end{equation}
    %%%%%%%%%%%%%%%% END MANUAL EDIT %%%%%%%%%%%%%%%%
  \end{proofcase}
  
  \begin{proofcase}[Case~\theproofcase, group~{\textrefp[d]{enu:proof:lem:var:Z:d:2}}: $K_0 = K_0' = K_1 = K_1'$, $L_0 = L_1' \neq L_0' = L_1$
    (((0) (0)) (nil (1)) (nil nil)) -- left variant]\label{case:z-est(((0 0 0 0) (0 1 1 0)) (((0) (0)) (nil (1)) (nil nil))):left}
    In this case, we have to estimate
    \begin{equation}\label{eq:z-est(((0 0 0 0) (0 1 1 0)) (((0) (0)) (nil (1)) (nil nil))):0}
      \sum_{K_0, L_0, L_0'} \langle Th_{K_0\times L_0}, h_{K_0\times L_0'} \rangle \langle Th_{K_0\times L_0'}, h_{K_0\times L_0} \rangle.
    \end{equation}
    We put $a_{K_0, L_0, L_0'} = \langle T h_{K_0\times L_0}, h_{K_0\times L_0'} \rangle$ and note
    the estimate
    \begin{equation}\label{eq:z-est(((0 0 0 0) (0 1 1 0)) (((0) (0)) (nil (1)) (nil nil))):1}
      |a_{K_0, L_0, L_0'}|
      \leq \|T\| |K_0|^{1/{p}} |L_0|^{1/{q}} |K_0|^{1/{p'}} |L_0'|^{1/{q'}}.
    \end{equation}
    Now, we write~\eqref{eq:z-est(((0 0 0 0) (0 1 1 0)) (((0) (0)) (nil (1)) (nil nil))):0} as
    follows:
    \begin{equation}\label{eq:z-est(((0 0 0 0) (0 1 1 0)) (((0) (0)) (nil (1)) (nil nil))):2}
      \sum_{K_0, L_0} \Bigl\langle T \sum_{L_0'} a_{K_0, L_0, L_0'} h_{K_0\times L_0'},  h_{K_0\times L_0} \Bigr\rangle.
    \end{equation}
    By duality, we obtain the subsequent upper estimate for~\eqref{eq:z-est(((0 0 0 0) (0 1 1 0))
      (((0) (0)) (nil (1)) (nil nil))):2}:
    \begin{equation}\label{eq:z-est(((0 0 0 0) (0 1 1 0)) (((0) (0)) (nil (1)) (nil nil))):3}
      \sum_{K_0, L_0}\left\|T \sum_{L_0'} a_{K_0, L_0, L_0'} h_{K_0\times L_0'}\right\|_{H^p(H^q)} \left\| h_{K_0\times L_0}\right\|_{(H^p(H^q))^*}.
    \end{equation}
    Estimate~\eqref{eq:z-est(((0 0 0 0) (0 1 1 0)) (((0) (0)) (nil (1)) (nil nil))):1} and the
    disjointness of the dyadic intervals (see~\textrefp[J]{enu:j1}) yield
    \begin{equation}\label{eq:z-est(((0 0 0 0) (0 1 1 0)) (((0) (0)) (nil (1)) (nil nil))):4}
      \sum_{K_0, L_0}\|T\| \left\| \sum_{L_0'}\max_{L_0'}\left(\|T\| |K_0|^{1/{p}} |L_0|^{1/{q}} |K_0|^{1/{p'}} |L_0'|^{1/{q'}}\right) h_{K_0\times L_0'}\right\|_{H^p(H^q)}\left\| h_{K_0\times L_0}\right\|_{(H^p(H^q))^*}.
    \end{equation}
    Consequently, we obtain
    \begin{equation}\label{eq:z-est(((0 0 0 0) (0 1 1 0)) (((0) (0)) (nil (1)) (nil nil))):5}
      \sum_{K_0, L_0}\|T\| \max_{L_0'}\left(\|T\| |K_0|^{1/{p}} |L_0|^{1/{q}} |K_0|^{1/{p'}} |L_0'|^{1/{q'}}\right) \left\| \sum_{L_0'}h_{K_0\times L_0'}\right\|_{H^p(H^q)}\left\| h_{K_0\times L_0}\right\|_{(H^p(H^q))^*}.
    \end{equation}
    %%%%%%%%%%%%%%%% BEGIN MANUAL EDIT %%%%%%%%%%%%%%%%
    Thus,~\eqref{eq:z-est(((0 0 0 0) (0 1 1 0)) (((0) (0)) (nil (1)) (nil nil))):5} is bounded from
    above by
    \begin{equation*}
      \|T\|^2 \max_{L_0'} |L_0'|^{1/{q'}} \sum_{K_0, L_0} |K_0|^2 |L_0|.
    \end{equation*}
    Using Hölder's inequality yields
    \begin{equation}\label{eq:z-est(((0 0 0 0) (0 1 1 0)) (((0) (0)) (nil (1)) (nil nil))):6}
      \|T\|^2 \max_{K_0, L_0, L_0'} |L_0'|^{1/{q'}}|K_0|.
    \end{equation}
    Inserting $|K_0|, |L_0|, |L_0'| \leq \alpha$ (see~\eqref{eq:alpha-small})
    into~\eqref{eq:z-est(((0 0 0 0) (0 1 1 0)) (((0) (0)) (nil (1)) (nil nil))):6}, we obtain the
    estimate
    \begin{equation}\label{eq:z-est(((0 0 0 0) (0 1 1 0)) (((0) (0)) (nil (1)) (nil nil))):7}
      \|T\|^2 \alpha^{1+1/{q'}}.
    \end{equation}
    %%%%%%%%%%%%%%%% END MANUAL EDIT %%%%%%%%%%%%%%%%
  \end{proofcase}
  
  \begin{proofcase}[Case~\theproofcase, group~{\textrefp[e]{enu:proof:lem:var:Z:e:1}}: $K_0 = K_1 \neq K_0' = K_1'$, $L_0 = L_0' = L_1 = L_1'$
    (((1) (0)) ((0) nil) (nil nil)) -- left variant]\label{case:z-est(((0 1 0 1) (0 0 0 0)) (((1) (0)) ((0) nil) (nil nil))):left}
    In this case, we have to estimate
    \begin{equation}\label{eq:z-est(((0 1 0 1) (0 0 0 0)) (((1) (0)) ((0) nil) (nil nil))):0}
      \sum_{K_0', L_0, K_0} \langle Th_{K_0\times L_0}, h_{K_0'\times L_0} \rangle \langle Th_{K_0\times L_0}, h_{K_0'\times L_0} \rangle.
    \end{equation}
    We put $a_{K_0, K_0', L_0} = \langle T h_{K_0\times L_0}, h_{K_0'\times L_0} \rangle$ and note
    the estimate
    \begin{equation}\label{eq:z-est(((0 1 0 1) (0 0 0 0)) (((1) (0)) ((0) nil) (nil nil))):1}
      |a_{K_0, K_0', L_0}|
      \leq \|T\| |K_0|^{1/{p}} |L_0|^{1/{q}} |K_0'|^{1/{p'}} |L_0|^{1/{q'}}.
    \end{equation}
    Now, we write~\eqref{eq:z-est(((0 1 0 1) (0 0 0 0)) (((1) (0)) ((0) nil) (nil nil))):0} as
    follows:
    \begin{equation}\label{eq:z-est(((0 1 0 1) (0 0 0 0)) (((1) (0)) ((0) nil) (nil nil))):2}
      \sum_{K_0', L_0} \Bigl\langle T \sum_{K_0} a_{K_0, K_0', L_0} h_{K_0\times L_0},  h_{K_0'\times L_0} \Bigr\rangle.
    \end{equation}
    By duality, we obtain the subsequent upper estimate for~\eqref{eq:z-est(((0 1 0 1) (0 0 0 0))
      (((1) (0)) ((0) nil) (nil nil))):2}:
    \begin{equation}\label{eq:z-est(((0 1 0 1) (0 0 0 0)) (((1) (0)) ((0) nil) (nil nil))):3}
      \sum_{K_0', L_0}\left\|T \sum_{K_0} a_{K_0, K_0', L_0} h_{K_0\times L_0}\right\|_{H^p(H^q)} \left\| h_{K_0'\times L_0}\right\|_{(H^p(H^q))^*}.
    \end{equation}
    Estimate~\eqref{eq:z-est(((0 1 0 1) (0 0 0 0)) (((1) (0)) ((0) nil) (nil nil))):1} and the
    disjointness of the dyadic intervals (see~\textrefp[J]{enu:j1}) yield
    \begin{equation}\label{eq:z-est(((0 1 0 1) (0 0 0 0)) (((1) (0)) ((0) nil) (nil nil))):4}
      \sum_{K_0', L_0}\|T\| \left\| \sum_{K_0}\max_{K_0}\left(\|T\| |K_0|^{1/{p}} |L_0|^{1/{q}} |K_0'|^{1/{p'}} |L_0|^{1/{q'}}\right) h_{K_0\times L_0}\right\|_{H^p(H^q)}\left\| h_{K_0'\times L_0}\right\|_{(H^p(H^q))^*}.
    \end{equation}
    Consequently, we obtain
    \begin{equation}\label{eq:z-est(((0 1 0 1) (0 0 0 0)) (((1) (0)) ((0) nil) (nil nil))):5}
      \sum_{K_0', L_0}\|T\| \max_{K_0}\left(\|T\| |K_0|^{1/{p}} |L_0|^{1/{q}} |K_0'|^{1/{p'}} |L_0|^{1/{q'}}\right) \left\| \sum_{K_0}h_{K_0\times L_0}\right\|_{H^p(H^q)}\left\| h_{K_0'\times L_0}\right\|_{(H^p(H^q))^*}.
    \end{equation}
    %%%%%%%%%%%%%%%% BEGIN MANUAL EDIT %%%%%%%%%%%%%%%%
    Thus,~\eqref{eq:z-est(((0 1 0 1) (0 0 0 0)) (((1) (0)) ((0) nil) (nil nil))):5} is bounded from
    above by
    \begin{equation*}
      \|T\|^2 \max_{K_0} |K_0|^{1/{p}} \sum_{K_0', L_0} |L_0|^2 |K_0'|^{2/{p'}}.
    \end{equation*}
    Using Hölder's inequality yields
    \begin{equation}\label{eq:z-est(((0 1 0 1) (0 0 0 0)) (((1) (0)) ((0) nil) (nil nil))):6}
      \|T\|^2 \max_{K_0, K_0', L_0} |K_0|^{1/{p}} |L_0| |K_0'|^{2/{p'}-1}.
    \end{equation}
    Inserting $|K_0|, |K_0'|, |L_0| \leq \alpha$ (see~\eqref{eq:alpha-small})
    into~\eqref{eq:z-est(((0 1 0 1) (0 0 0 0)) (((1) (0)) ((0) nil) (nil nil))):6}, we obtain the
    estimate
    \begin{equation}\label{eq:z-est(((0 1 0 1) (0 0 0 0)) (((1) (0)) ((0) nil) (nil nil))):7}
      \|T\|^2 \alpha^{1+1/{p'}}.
    \end{equation}
    %%%%%%%%%%%%%%%% END MANUAL EDIT %%%%%%%%%%%%%%%%
  \end{proofcase}

  \begin{proofcase}[Case~\theproofcase, group~{\textrefp[e]{enu:proof:lem:var:Z:e:2}}: $K_0 = K_1' \neq K_0' = K_1$, $L_0 = L_0' = L_1 = L_1'$ (((0) (0)) ((1) nil) (nil nil)) -- left variant]\label{case:z-est(((0 1 1 0) (0 0 0 0)) (((0) (0)) ((1) nil) (nil nil))):left}
    In this case, we have to estimate
    \begin{equation}\label{eq:z-est(((0 1 1 0) (0 0 0 0)) (((0) (0)) ((1) nil) (nil nil))):0}
      \sum_{K_0, L_0, K_0'} \langle Th_{K_0\times L_0}, h_{K_0'\times L_0} \rangle \langle Th_{K_0'\times L_0}, h_{K_0\times L_0} \rangle.
    \end{equation}
    We put $a_{K_0, K_0', L_0} = \langle T h_{K_0\times L_0}, h_{K_0'\times L_0} \rangle$ and note
    the estimate
    \begin{equation}\label{eq:z-est(((0 1 1 0) (0 0 0 0)) (((0) (0)) ((1) nil) (nil nil))):1}
      |a_{K_0, K_0', L_0}|
      \leq \|T\| |K_0|^{1/{p}} |L_0|^{1/{q}} |K_0'|^{1/{p'}} |L_0|^{1/{q'}}.
    \end{equation}
    Now, we write~\eqref{eq:z-est(((0 1 1 0) (0 0 0 0)) (((0) (0)) ((1) nil) (nil nil))):0} as
    follows:
    \begin{equation}\label{eq:z-est(((0 1 1 0) (0 0 0 0)) (((0) (0)) ((1) nil) (nil nil))):2}
      \sum_{K_0, L_0} \Bigl\langle T \sum_{K_0'} a_{K_0, K_0', L_0} h_{K_0'\times L_0},  h_{K_0\times L_0} \Bigr\rangle.
    \end{equation}
    By duality, we obtain the subsequent upper estimate for~\eqref{eq:z-est(((0 1 1 0) (0 0 0 0))
      (((0) (0)) ((1) nil) (nil nil))):2}:
    \begin{equation}\label{eq:z-est(((0 1 1 0) (0 0 0 0)) (((0) (0)) ((1) nil) (nil nil))):3}
      \sum_{K_0, L_0}\left\|T \sum_{K_0'} a_{K_0, K_0', L_0} h_{K_0'\times L_0}\right\|_{H^p(H^q)} \left\| h_{K_0\times L_0}\right\|_{(H^p(H^q))^*}.
    \end{equation}
    Estimate~\eqref{eq:z-est(((0 1 1 0) (0 0 0 0)) (((0) (0)) ((1) nil) (nil nil))):1} and the
    disjointness of the dyadic intervals (see~\textrefp[J]{enu:j1}) yield
    \begin{equation}\label{eq:z-est(((0 1 1 0) (0 0 0 0)) (((0) (0)) ((1) nil) (nil nil))):4}
      \sum_{K_0, L_0}\|T\| \left\| \sum_{K_0'}\max_{K_0'}\left(\|T\| |K_0|^{1/{p}} |L_0|^{1/{q}} |K_0'|^{1/{p'}} |L_0|^{1/{q'}}\right) h_{K_0'\times L_0}\right\|_{H^p(H^q)}\left\| h_{K_0\times L_0}\right\|_{(H^p(H^q))^*}.
    \end{equation}
    Consequently, we obtain
    \begin{equation}\label{eq:z-est(((0 1 1 0) (0 0 0 0)) (((0) (0)) ((1) nil) (nil nil))):5}
      \sum_{K_0, L_0}\|T\| \max_{K_0'}\left(\|T\| |K_0|^{1/{p}} |L_0|^{1/{q}} |K_0'|^{1/{p'}} |L_0|^{1/{q'}}\right) \left\| \sum_{K_0'}h_{K_0'\times L_0}\right\|_{H^p(H^q)}\left\| h_{K_0\times L_0}\right\|_{(H^p(H^q))^*}.
    \end{equation}
    %%%%%%%%%%%%%%%% BEGIN MANUAL EDIT %%%%%%%%%%%%%%%%
    Thus,~\eqref{eq:z-est(((0 1 1 0) (0 0 0 0)) (((0) (0)) ((1) nil) (nil nil))):5} is bounded from
    above by
    \begin{equation*}
      \|T\|^2 \max_{K_0'}  |K_0'|^{1/{p'}} \sum_{K_0, L_0}  |K_0| |L_0|^2.
    \end{equation*}
    Using Hölder's inequality yields
    \begin{equation}\label{eq:z-est(((0 1 1 0) (0 0 0 0)) (((0) (0)) ((1) nil) (nil nil))):6}
      \|T\|^2 \max_{K_0', L_0}  |K_0'|^{1/{p'}} |L_0|.
    \end{equation}
    Inserting $|K_0'|, |L_0'|\leq \alpha$ (see~\eqref{eq:alpha-small}) into~\eqref{eq:z-est(((0 1 1
      0) (0 0 0 0)) (((0) (0)) ((1) nil) (nil nil))):6}, we obtain the estimate
    \begin{equation}\label{eq:z-est(((0 1 1 0) (0 0 0 0)) (((0) (0)) ((1) nil) (nil nil))):7}
      \|T\|^2 \alpha^{1+1/{p'}}.
    \end{equation}
    %%%%%%%%%%%%%%%% END MANUAL EDIT %%%%%%%%%%%%%%%%
  \end{proofcase}

  \begin{proofcase}[Case~\theproofcase, group~{\textrefp[f]{enu:proof:lem:var:Z:f:1}}: $K_0 = K_1 \neq K_0' = K_1'$, $L_0 = L_1 \neq L_0' = L_1'$
    (((1) (1)) ((0) (0)) (nil nil)) -- left variant]\label{case:z-est(((0 1 0 1) (0 1 0 1)) (((1) (1)) ((0) (0)) (nil nil))):left}
    In this case, we have to estimate
    \begin{equation}\label{eq:z-est(((0 1 0 1) (0 1 0 1)) (((1) (1)) ((0) (0)) (nil nil))):0}
      \sum_{K_0', L_0', K_0, L_0} \langle Th_{K_0\times L_0}, h_{K_0'\times L_0'} \rangle \langle Th_{K_0\times L_0}, h_{K_0'\times L_0'} \rangle.
    \end{equation}
    We put $a_{K_0, K_0', L_0, L_0'} = \langle T h_{K_0\times L_0}, h_{K_0'\times L_0'} \rangle$ and
    note the estimate
    \begin{equation}\label{eq:z-est(((0 1 0 1) (0 1 0 1)) (((1) (1)) ((0) (0)) (nil nil))):1}
      |a_{K_0, K_0', L_0, L_0'}|
      \leq \|T\| |K_0|^{1/{p}} |L_0|^{1/{q}} |K_0'|^{1/{p'}} |L_0'|^{1/{q'}}.
    \end{equation}
    Now, we write~\eqref{eq:z-est(((0 1 0 1) (0 1 0 1)) (((1) (1)) ((0) (0)) (nil nil))):0} as
    follows:
    \begin{equation}\label{eq:z-est(((0 1 0 1) (0 1 0 1)) (((1) (1)) ((0) (0)) (nil nil))):2}
      \sum_{K_0', L_0'} \Bigl\langle T \sum_{K_0, L_0} a_{K_0, K_0', L_0, L_0'} h_{K_0\times L_0},  h_{K_0'\times L_0'} \Bigr\rangle.
    \end{equation}
    By duality, we obtain the subsequent upper estimate for~\eqref{eq:z-est(((0 1 0 1) (0 1 0 1))
      (((1) (1)) ((0) (0)) (nil nil))):2}:
    \begin{equation}\label{eq:z-est(((0 1 0 1) (0 1 0 1)) (((1) (1)) ((0) (0)) (nil nil))):3}
      \sum_{K_0', L_0'}\left\|T \sum_{K_0, L_0} a_{K_0, K_0', L_0, L_0'} h_{K_0\times L_0}\right\|_{H^p(H^q)} \left\| h_{K_0'\times L_0'}\right\|_{(H^p(H^q))^*}.
    \end{equation}
    Estimate~\eqref{eq:z-est(((0 1 0 1) (0 1 0 1)) (((1) (1)) ((0) (0)) (nil nil))):1} and the
    disjointness of the dyadic intervals (see~\textrefp[J]{enu:j1}) yield
    \begin{equation}\label{eq:z-est(((0 1 0 1) (0 1 0 1)) (((1) (1)) ((0) (0)) (nil nil))):4}
      \sum_{K_0', L_0'}\|T\| \left\| \sum_{K_0, L_0}\max_{K_0, L_0}\left(\|T\| |K_0|^{1/{p}} |L_0|^{1/{q}} |K_0'|^{1/{p'}} |L_0'|^{1/{q'}}\right) h_{K_0\times L_0}\right\|_{H^p(H^q)}\left\| h_{K_0'\times L_0'}\right\|_{(H^p(H^q))^*}.
    \end{equation}
    Consequently, we obtain
    \begin{equation}\label{eq:z-est(((0 1 0 1) (0 1 0 1)) (((1) (1)) ((0) (0)) (nil nil))):5}
      \sum_{K_0', L_0'}\|T\| \max_{K_0, L_0}\left(\|T\| |K_0|^{1/{p}} |L_0|^{1/{q}} |K_0'|^{1/{p'}} |L_0'|^{1/{q'}}\right) \left\| \sum_{K_0, L_0}h_{K_0\times L_0}\right\|_{H^p(H^q)}\left\| h_{K_0'\times L_0'}\right\|_{(H^p(H^q))^*}.
    \end{equation}
    %%%%%%%%%%%%%%%% BEGIN MANUAL EDIT %%%%%%%%%%%%%%%%
    Thus,~\eqref{eq:z-est(((0 1 0 1) (0 1 0 1)) (((1) (1)) ((0) (0)) (nil nil))):5} is bounded from
    above by
    \begin{equation*}
      \|T\|^2 \max_{K_0, L_0} |K_0|^{1/{p}} |L_0|^{1/{q}} \sum_{K_0', L_0'} |K_0'|^{2/{p'}} |L_0'|^{2/{q'}}.
    \end{equation*}
    Using Hölder's inequality yields
    \begin{equation}\label{eq:z-est(((0 1 0 1) (0 1 0 1)) (((1) (1)) ((0) (0)) (nil nil))):6}
      \|T\|^2 \max_{K_0, L_0, K_0', L_0'} |K_0|^{1/{p}} |L_0|^{1/{q}} |K_0'|^{2/{p'}-1} |L_0'|^{2/{q'}-1}.
    \end{equation}
    Inserting $|K_0|, |K_0'|, |L_0|, |L_0'| \leq \alpha$ (see~\eqref{eq:alpha-small})
    into~\eqref{eq:z-est(((0 1 0 1) (0 1 0 1)) (((1) (1)) ((0) (0)) (nil nil))):6}, we obtain the
    estimate
    \begin{equation}\label{eq:z-est(((0 1 0 1) (0 1 0 1)) (((1) (1)) ((0) (0)) (nil nil))):7}
      \|T\|^2 \alpha^{1/{p'}+1/{q'}}.
    \end{equation}
    %%%%%%%%%%%%%%%% END MANUAL EDIT %%%%%%%%%%%%%%%%
  \end{proofcase}

  \begin{proofcase}[Case~\theproofcase, group~{\textrefp[f]{enu:proof:lem:var:Z:f:1}}: $K_0 = K_1 \neq K_0' = K_1'$, $L_0 = L_1 \neq L_0' = L_1'$
    (((0) (0)) (nil nil) ((1) (1))) -- right variant]\label{case:z-est(((0 1 0 1) (0 1 0 1)) (((0)
      (0)) (nil nil) ((1) (1)))):right}
    In this case, we have to estimate
    \begin{equation}\label{eq:z-est(((0 1 0 1) (0 1 0 1)) (((0) (0)) (nil nil) ((1) (1)))):0}
      \sum_{K_0, L_0, K_0', L_0'} \langle Th_{K_0\times L_0}, h_{K_0'\times L_0'} \rangle \langle Th_{K_0\times L_0}, h_{K_0'\times L_0'} \rangle.
    \end{equation}
    We put $a_{K_0, K_0', L_0, L_0'} = \langle T h_{K_0\times L_0}, h_{K_0'\times L_0'} \rangle$ and
    note the estimate
    \begin{equation}\label{eq:z-est(((0 1 0 1) (0 1 0 1)) (((0) (0)) (nil nil) ((1) (1)))):1}
      |a_{K_0, K_0', L_0, L_0'}|
      \leq \|T\| |K_0|^{1/{p}} |L_0|^{1/{q}} |K_0'|^{1/{p'}} |L_0'|^{1/{q'}}.
    \end{equation}
    Now, we write~\eqref{eq:z-est(((0 1 0 1) (0 1 0 1)) (((0) (0)) (nil nil) ((1) (1)))):0} as
    follows:
    \begin{equation}\label{eq:z-est(((0 1 0 1) (0 1 0 1)) (((0) (0)) (nil nil) ((1) (1)))):2}
      \sum_{K_0, L_0} \Bigl\langle T  h_{K_0\times L_0}, \sum_{K_0', L_0'} a_{K_0, K_0', L_0, L_0'} h_{K_0'\times L_0'} \Bigr\rangle.
    \end{equation}
    By duality, we obtain the subsequent upper estimate for~\eqref{eq:z-est(((0 1 0 1) (0 1 0 1))
      (((0) (0)) (nil nil) ((1) (1)))):2}:
    \begin{equation}\label{eq:z-est(((0 1 0 1) (0 1 0 1)) (((0) (0)) (nil nil) ((1) (1)))):3}
      \sum_{K_0, L_0}\left\|T  h_{K_0\times L_0}\right\|_{H^p(H^q)} \left\|\sum_{K_0', L_0'} a_{K_0, K_0', L_0, L_0'} h_{K_0'\times L_0'}\right\|_{(H^p(H^q))^*}.
    \end{equation}
    Estimate~\eqref{eq:z-est(((0 1 0 1) (0 1 0 1)) (((0) (0)) (nil nil) ((1) (1)))):1} and the
    disjointness of the dyadic intervals (see~\textrefp[J]{enu:j1}) yield
    \begin{equation}\label{eq:z-est(((0 1 0 1) (0 1 0 1)) (((0) (0)) (nil nil) ((1) (1)))):4}
      \sum_{K_0, L_0}\|T\| \left\| h_{K_0\times L_0}\right\|_{H^p(H^q)}\left\| \sum_{K_0', L_0'}\max_{K_0', L_0'}\left(\|T\| |K_0|^{1/{p}} |L_0|^{1/{q}} |K_0'|^{1/{p'}} |L_0'|^{1/{q'}}\right) h_{K_0'\times L_0'}\right\|_{(H^p(H^q))^*}.
    \end{equation}
    Consequently, we obtain
    \begin{equation}\label{eq:z-est(((0 1 0 1) (0 1 0 1)) (((0) (0)) (nil nil) ((1) (1)))):5}
      \sum_{K_0, L_0}\|T\| \left\| h_{K_0\times L_0}\right\|_{H^p(H^q)}\max_{K_0', L_0'}\left(\|T\| |K_0|^{1/{p}} |L_0|^{1/{q}} |K_0'|^{1/{p'}} |L_0'|^{1/{q'}}\right) \left\| \sum_{K_0', L_0'}h_{K_0'\times L_0'}\right\|_{(H^p(H^q))^*}.
    \end{equation}
    %%%%%%%%%%%%%%%% BEGIN MANUAL EDIT %%%%%%%%%%%%%%%%
    Thus,~\eqref{eq:z-est(((0 1 0 1) (0 1 0 1)) (((0) (0)) (nil nil) ((1) (1)))):5} is bounded from
    above by
    \begin{equation*}
      \|T\|^2 \max_{K_0', L_0'}  |K_0'|^{1/{p'}} |L_0'|^{1/{q'}} \sum_{K_0, L_0} |K_0|^{2/{p}} |L_0|^{2/{q}}.
    \end{equation*}
    Using Hölder's inequality yields
    \begin{equation}\label{eq:z-est(((0 1 0 1) (0 1 0 1)) (((0) (0)) (nil nil) ((1) (1)))):6}
      \|T\|^2 \max_{K_0, L_0, K_0', L_0'}  |K_0'|^{1/{p'}} |L_0'|^{1/{q'}}  |K_0|^{2/{p}-1} |L_0|^{2/{q}-1}.
    \end{equation}
    Inserting $|K_0|, |K_0'|, |L_0|, |L_0'| \leq \alpha$ (see~\eqref{eq:alpha-small})
    into~\eqref{eq:z-est(((0 1 0 1) (0 1 0 1)) (((0) (0)) (nil nil) ((1) (1)))):6}, we obtain the
    estimate
    \begin{equation}\label{eq:z-est(((0 1 0 1) (0 1 0 1)) (((0) (0)) (nil nil) ((1) (1)))):7}
      \|T\|^2 \alpha^{1/{p}+1/{q}}.
    \end{equation}
    %%%%%%%%%%%%%%%% END MANUAL EDIT %%%%%%%%%%%%%%%%
  \end{proofcase}

  %%%%%%%%%%%%%%%% BEGIN MANUAL EDIT %%%%%%%%%%%%%%%%
  \subsubsection*{Summary of \textref[Case~]{case:z-est(((0 1 0 1) (0 1 0 1)) (((1) (1)) ((0) (0))
      (nil nil))):left} and \textref[Case~]{case:z-est(((0 1 0 1) (0 1 0 1)) (((0) (0)) (nil nil)
      ((1) (1)))):right}}
  Combining~\eqref{eq:z-est(((0 1 0 1) (0 1 0 1)) (((1) (1)) ((0) (0)) (nil nil))):7}
  with~\eqref{eq:z-est(((0 1 0 1) (0 1 0 1)) (((0) (0)) (nil nil) ((1) (1)))):7} yields
  \begin{equation}\label{eq:z-est(((0 1 0 1) (0 1 0 1)) (((1) (1)) ((0) (0)) (nil nil)))(((0 1 0 1)
      (0 1 0 1)) (((0) (0)) (nil nil) ((1) (1))))-final}
    \cond_{\theta,\varepsilon} Z^2
    \leq \|T\|^2 \alpha.
  \end{equation}
  %%%%%%%%%%%%%%%% END MANUAL EDIT %%%%%%%%%%%%%%%%

  \begin{proofcase}[Case~\theproofcase, group~{\textrefp[f]{enu:proof:lem:var:Z:f:2}}: $K_0 = K_1 \neq K_0' = K_1'$, $L_0 = L_1' \neq L_0' = L_1$ (((1) (0)) ((0) (1)) (nil nil)) -- left variant]\label{case:z-est(((0 1 0 1) (0 1 1 0)) (((1) (0)) ((0) (1)) (nil nil))):left}
    In this case, we have to estimate
    \begin{equation}\label{eq:z-est(((0 1 0 1) (0 1 1 0)) (((1) (0)) ((0) (1)) (nil nil))):0}
      \sum_{K_0', L_0, K_0, L_0'} \langle Th_{K_0\times L_0}, h_{K_0'\times L_0'} \rangle \langle Th_{K_0\times L_0'}, h_{K_0'\times L_0} \rangle.
    \end{equation}
    We put $a_{K_0, K_0', L_0, L_0'} = \langle T h_{K_0\times L_0}, h_{K_0'\times L_0'} \rangle$ and
    note the estimate
    \begin{equation}\label{eq:z-est(((0 1 0 1) (0 1 1 0)) (((1) (0)) ((0) (1)) (nil nil))):1}
      |a_{K_0, K_0', L_0, L_0'}|
      \leq \|T\| |K_0|^{1/{p}} |L_0|^{1/{q}} |K_0'|^{1/{p'}} |L_0'|^{1/{q'}}.
    \end{equation}
    Now, we write~\eqref{eq:z-est(((0 1 0 1) (0 1 1 0)) (((1) (0)) ((0) (1)) (nil nil))):0} as
    follows:
    \begin{equation}\label{eq:z-est(((0 1 0 1) (0 1 1 0)) (((1) (0)) ((0) (1)) (nil nil))):2}
      \sum_{K_0', L_0} \Bigl\langle T \sum_{K_0, L_0'} a_{K_0, K_0', L_0, L_0'} h_{K_0\times L_0'},  h_{K_0'\times L_0} \Bigr\rangle.
    \end{equation}
    By duality, we obtain the subsequent upper estimate for~\eqref{eq:z-est(((0 1 0 1) (0 1 1 0))
      (((1) (0)) ((0) (1)) (nil nil))):2}:
    \begin{equation}\label{eq:z-est(((0 1 0 1) (0 1 1 0)) (((1) (0)) ((0) (1)) (nil nil))):3}
      \sum_{K_0', L_0}\left\|T \sum_{K_0, L_0'} a_{K_0, K_0', L_0, L_0'} h_{K_0\times L_0'}\right\|_{H^p(H^q)} \left\| h_{K_0'\times L_0}\right\|_{(H^p(H^q))^*}.
    \end{equation}
    Estimate~\eqref{eq:z-est(((0 1 0 1) (0 1 1 0)) (((1) (0)) ((0) (1)) (nil nil))):1} and the
    disjointness of the dyadic intervals (see~\textrefp[J]{enu:j1}) yield
    \begin{equation}\label{eq:z-est(((0 1 0 1) (0 1 1 0)) (((1) (0)) ((0) (1)) (nil nil))):4}
      \sum_{K_0', L_0}\|T\| \left\| \sum_{K_0, L_0'}\max_{K_0, L_0'}\left(\|T\| |K_0|^{1/{p}} |L_0|^{1/{q}} |K_0'|^{1/{p'}} |L_0'|^{1/{q'}}\right) h_{K_0\times L_0'}\right\|_{H^p(H^q)}\left\| h_{K_0'\times L_0}\right\|_{(H^p(H^q))^*}.
    \end{equation}
    Consequently, we obtain
    \begin{equation}\label{eq:z-est(((0 1 0 1) (0 1 1 0)) (((1) (0)) ((0) (1)) (nil nil))):5}
      \sum_{K_0', L_0}\|T\| \max_{K_0, L_0'}\left(\|T\| |K_0|^{1/{p}} |L_0|^{1/{q}} |K_0'|^{1/{p'}} |L_0'|^{1/{q'}}\right) \left\| \sum_{K_0, L_0'}h_{K_0\times L_0'}\right\|_{H^p(H^q)}\left\| h_{K_0'\times L_0}\right\|_{(H^p(H^q))^*}.
    \end{equation}
    %%%%%%%%%%%%%%%% BEGIN MANUAL EDIT %%%%%%%%%%%%%%%%
    Thus,~\eqref{eq:z-est(((0 1 0 1) (0 1 1 0)) (((1) (0)) ((0) (1)) (nil nil))):5} is bounded from
    above by
    \begin{equation*}
      \|T\|^2 \max_{K_0, L_0'} |K_0|^{1/{p}} |L_0'|^{1/{q'}}  \sum_{K_0', L_0} |L_0| |K_0'|^{2/{p'}}.
    \end{equation*}
    Using Hölder's inequality yields
    \begin{equation}\label{eq:z-est(((0 1 0 1) (0 1 1 0)) (((1) (0)) ((0) (1)) (nil nil))):6}
      \|T\|^2 \max_{K_0, K_0', L_0'} |K_0|^{1/{p}} |L_0'|^{1/{q'}} |K_0'|^{2/{p'}-1}.
    \end{equation}
    Inserting $|K_0|, |K_0'|, |L_0'| \leq \alpha$ (see~\eqref{eq:alpha-small})
    into~\eqref{eq:z-est(((0 1 0 1) (0 1 1 0)) (((1) (0)) ((0) (1)) (nil nil))):6}, we obtain the
    estimate
    \begin{equation}\label{eq:z-est(((0 1 0 1) (0 1 1 0)) (((1) (0)) ((0) (1)) (nil nil))):7}
      \|T\|^2 \alpha^{1/{p'}+1/{q'}}.
    \end{equation}
    %%%%%%%%%%%%%%%% END MANUAL EDIT %%%%%%%%%%%%%%%%
  \end{proofcase}

  \begin{proofcase}[Case~\theproofcase, group~{\textrefp[f]{enu:proof:lem:var:Z:f:2}}: $K_0 = K_1 \neq K_0' = K_1'$, $L_0 = L_1' \neq L_0' = L_1$
    (((0) (1)) (nil nil) ((1) (0))) -- right variant]\label{case:z-est(((0 1 0 1) (0 1 1 0)) (((0)
      (1)) (nil nil) ((1) (0)))):right}
    In this case, we have to estimate
    \begin{equation}\label{eq:z-est(((0 1 0 1) (0 1 1 0)) (((0) (1)) (nil nil) ((1) (0)))):0}
      \sum_{K_0, L_0', K_0', L_0} \langle Th_{K_0\times L_0}, h_{K_0'\times L_0'} \rangle \langle Th_{K_0\times L_0'}, h_{K_0'\times L_0} \rangle.
    \end{equation}
    We put $a_{K_0, K_0', L_0, L_0'} = \langle T h_{K_0\times L_0}, h_{K_0'\times L_0'} \rangle$ and
    note the estimate
    \begin{equation}\label{eq:z-est(((0 1 0 1) (0 1 1 0)) (((0) (1)) (nil nil) ((1) (0)))):1}
      |a_{K_0, K_0', L_0, L_0'}|
      \leq \|T\| |K_0|^{1/{p}} |L_0|^{1/{q}} |K_0'|^{1/{p'}} |L_0'|^{1/{q'}}.
    \end{equation}
    Now, we write~\eqref{eq:z-est(((0 1 0 1) (0 1 1 0)) (((0) (1)) (nil nil) ((1) (0)))):0} as
    follows:
    \begin{equation}\label{eq:z-est(((0 1 0 1) (0 1 1 0)) (((0) (1)) (nil nil) ((1) (0)))):2}
      \sum_{K_0, L_0'} \Bigl\langle T  h_{K_0\times L_0'}, \sum_{K_0', L_0} a_{K_0, K_0', L_0, L_0'} h_{K_0'\times L_0} \Bigr\rangle.
    \end{equation}
    By duality, we obtain the subsequent upper estimate for~\eqref{eq:z-est(((0 1 0 1) (0 1 1 0))
      (((0) (1)) (nil nil) ((1) (0)))):2}:
    \begin{equation}\label{eq:z-est(((0 1 0 1) (0 1 1 0)) (((0) (1)) (nil nil) ((1) (0)))):3}
      \sum_{K_0, L_0'}\left\|T  h_{K_0\times L_0'}\right\|_{H^p(H^q)} \left\|\sum_{K_0', L_0} a_{K_0, K_0', L_0, L_0'} h_{K_0'\times L_0}\right\|_{(H^p(H^q))^*}.
    \end{equation}
    Estimate~\eqref{eq:z-est(((0 1 0 1) (0 1 1 0)) (((0) (1)) (nil nil) ((1) (0)))):1} and the
    disjointness of the dyadic intervals (see~\textrefp[J]{enu:j1}) yield
    \begin{equation}\label{eq:z-est(((0 1 0 1) (0 1 1 0)) (((0) (1)) (nil nil) ((1) (0)))):4}
      \sum_{K_0, L_0'}\|T\| \left\| h_{K_0\times L_0'}\right\|_{H^p(H^q)}\left\| \sum_{K_0', L_0}\max_{K_0', L_0}\left(\|T\| |K_0|^{1/{p}} |L_0|^{1/{q}} |K_0'|^{1/{p'}} |L_0'|^{1/{q'}}\right) h_{K_0'\times L_0}\right\|_{(H^p(H^q))^*}.
    \end{equation}
    Consequently, we obtain
    \begin{equation}\label{eq:z-est(((0 1 0 1) (0 1 1 0)) (((0) (1)) (nil nil) ((1) (0)))):5}
      \sum_{K_0, L_0'}\|T\| \left\| h_{K_0\times L_0'}\right\|_{H^p(H^q)}\max_{K_0', L_0}\left(\|T\| |K_0|^{1/{p}} |L_0|^{1/{q}} |K_0'|^{1/{p'}} |L_0'|^{1/{q'}}\right) \left\| \sum_{K_0', L_0}h_{K_0'\times L_0}\right\|_{(H^p(H^q))^*}.
    \end{equation}
    %%%%%%%%%%%%%%%% BEGIN MANUAL EDIT %%%%%%%%%%%%%%%%
    Thus,~\eqref{eq:z-est(((0 1 0 1) (0 1 1 0)) (((0) (1)) (nil nil) ((1) (0)))):5} is bounded from
    above by
    \begin{equation*}
      \|T\|^2 \max_{K_0', L_0}  |L_0|^{1/{q}} |K_0'|^{1/{p'}} \sum_{K_0, L_0'} |K_0|^{2/{p}} |L_0'|.
    \end{equation*}
    Using Hölder's inequality yields
    \begin{equation}\label{eq:z-est(((0 1 0 1) (0 1 1 0)) (((0) (1)) (nil nil) ((1) (0)))):6}
      \|T\|^2 \max_{K_0, K_0', L_0}  |L_0|^{1/{q}} |K_0'|^{1/{p'}}  |K_0|^{2/{p}-1}.
    \end{equation}
    Inserting $|K_0|, |K_0'|, |L_0| \leq \alpha$ (see~\eqref{eq:alpha-small})
    into~\eqref{eq:z-est(((0 1 0 1) (0 1 1 0)) (((0) (1)) (nil nil) ((1) (0)))):6}, we obtain the
    estimate
    \begin{equation}\label{eq:z-est(((0 1 0 1) (0 1 1 0)) (((0) (1)) (nil nil) ((1) (0)))):7}
      \|T\|^2 \alpha^{1/{p}+1/{q}}.
    \end{equation}
    %%%%%%%%%%%%%%%% END MANUAL EDIT %%%%%%%%%%%%%%%%
  \end{proofcase}

  %%%%%%%%%%%%%%%% BEGIN MANUAL EDIT %%%%%%%%%%%%%%%%
  \subsubsection*{Summary of \textref[Case~]{case:z-est(((0 1 0 1) (0 1 1 0)) (((1) (0)) ((0) (1))
      (nil nil))):left} and \textref[Case~]{case:z-est(((0 1 0 1) (0 1 1 0)) (((0) (1)) (nil nil)
      ((1) (0)))):right}}
  Combining~\eqref{eq:z-est(((0 1 0 1) (0 1 1 0)) (((1) (0)) ((0) (1)) (nil nil))):7}
  with~\eqref{eq:z-est(((0 1 0 1) (0 1 1 0)) (((0) (1)) (nil nil) ((1) (0)))):7} yields
  \begin{equation}\label{eq:z-est(((0 1 0 1) (0 1 1 0)) (((0) (1)) (nil nil) ((1) (0))))(((0 1 0 1)
      (0 1 1 0)) (((1) (0)) ((0) (1)) (nil nil)))-final}
    \cond_{\theta,\varepsilon} Z^2
    \leq \|T\|^2 \alpha.
  \end{equation}
  %%%%%%%%%%%%%%%% END MANUAL EDIT %%%%%%%%%%%%%%%%

  \begin{proofcase}[Case~\theproofcase, group~{\textrefp[f]{enu:proof:lem:var:Z:f:3}}: $K_0 = K_1 \neq K_0' = K_1'$, $L_0 = L_0' \neq L_1 = L_1'$ (((1) (2)) ((0) (0)) (nil nil)) -- left variant]\label{case:z-est(((0 1 0 1) (0 0 2 2)) (((1) (2)) ((0) (0)) (nil nil))):left}
    In this case, we have to estimate
    \begin{equation}\label{eq:z-est(((0 1 0 1) (0 0 2 2)) (((1) (2)) ((0) (0)) (nil nil))):0}
      \sum_{K_0', L_1, K_0, L_0} \langle Th_{K_0\times L_0}, h_{K_0'\times L_0} \rangle \langle Th_{K_0\times L_1}, h_{K_0'\times L_1} \rangle.
    \end{equation}
    We put $a_{K_0, K_0', L_0} = \langle T h_{K_0\times L_0}, h_{K_0'\times L_0} \rangle$ and note
    the estimate
    \begin{equation}\label{eq:z-est(((0 1 0 1) (0 0 2 2)) (((1) (2)) ((0) (0)) (nil nil))):1}
      |a_{K_0, K_0', L_0}|
      \leq \|T\| |K_0|^{1/{p}} |L_0|^{1/{q}} |K_0'|^{1/{p'}} |L_0|^{1/{q'}}.
    \end{equation}
    Now, we write~\eqref{eq:z-est(((0 1 0 1) (0 0 2 2)) (((1) (2)) ((0) (0)) (nil nil))):0} as
    follows:
    \begin{equation}\label{eq:z-est(((0 1 0 1) (0 0 2 2)) (((1) (2)) ((0) (0)) (nil nil))):2}
      \sum_{K_0', L_1} \Bigl\langle T \sum_{K_0, L_0} a_{K_0, K_0', L_0} h_{K_0\times L_1},  h_{K_0'\times L_1} \Bigr\rangle.
    \end{equation}
    By duality, we obtain the subsequent upper estimate for~\eqref{eq:z-est(((0 1 0 1) (0 0 2 2))
      (((1) (2)) ((0) (0)) (nil nil))):2}:
    \begin{equation}\label{eq:z-est(((0 1 0 1) (0 0 2 2)) (((1) (2)) ((0) (0)) (nil nil))):3}
      \sum_{K_0', L_1}\left\|T \sum_{K_0, L_0} a_{K_0, K_0', L_0} h_{K_0\times L_1}\right\|_{H^p(H^q)} \left\| h_{K_0'\times L_1}\right\|_{(H^p(H^q))^*}.
    \end{equation}
    Estimate~\eqref{eq:z-est(((0 1 0 1) (0 0 2 2)) (((1) (2)) ((0) (0)) (nil nil))):1} and the
    disjointness of the dyadic intervals (see~\textrefp[J]{enu:j1}) yield
    \begin{equation}\label{eq:z-est(((0 1 0 1) (0 0 2 2)) (((1) (2)) ((0) (0)) (nil nil))):4}
      \sum_{K_0', L_1}\|T\| \left\| \sum_{K_0, L_0}\max_{K_0}\left(\|T\| |K_0|^{1/{p}} |L_0|^{1/{q}} |K_0'|^{1/{p'}} |L_0|^{1/{q'}}\right) h_{K_0\times L_1}\right\|_{H^p(H^q)}\left\| h_{K_0'\times L_1}\right\|_{(H^p(H^q))^*}.
    \end{equation}
    Consequently, we obtain
    \begin{equation}\label{eq:z-est(((0 1 0 1) (0 0 2 2)) (((1) (2)) ((0) (0)) (nil nil))):5}
      \sum_{K_0', L_1}\|T\| \sum_{L_0}\max_{K_0}\left(\|T\| |K_0|^{1/{p}} |L_0|^{1/{q}} |K_0'|^{1/{p'}} |L_0|^{1/{q'}}\right) \left\| \sum_{K_0}h_{K_0\times L_1}\right\|_{H^p(H^q)}\left\| h_{K_0'\times L_1}\right\|_{(H^p(H^q))^*}.
    \end{equation}
    %%%%%%%%%%%%%%%% BEGIN MANUAL EDIT %%%%%%%%%%%%%%%%
    Thus,~\eqref{eq:z-est(((0 1 0 1) (0 0 2 2)) (((1) (2)) ((0) (0)) (nil nil))):5} is bounded from
    above by
    \begin{equation*}
      \|T\|^2 \max_{K_0} |K_0|^{1/{p}} \sum_{K_0', L_1} \sum_{L_0} |L_0| |K_0'|^{2/{p'}} |L_1|.
    \end{equation*}
    Using Hölder's inequality yields
    \begin{equation}\label{eq:z-est(((0 1 0 1) (0 0 2 2)) (((1) (2)) ((0) (0)) (nil nil))):6}
      \|T\|^2 \max_{K_0, K_0'} |K_0|^{1/{p}} |K_0'|^{2/{p'}-1}.
    \end{equation}
    Inserting $|K_0|, |K_0'|\leq \alpha$ (see~\eqref{eq:alpha-small}) into~\eqref{eq:z-est(((0 1 0
      1) (0 0 2 2)) (((1) (2)) ((0) (0)) (nil nil))):6}, we obtain the estimate
    \begin{equation}\label{eq:z-est(((0 1 0 1) (0 0 2 2)) (((1) (2)) ((0) (0)) (nil nil))):7}
      \|T\|^2 \alpha^{1/{p'}}.
    \end{equation}
    %%%%%%%%%%%%%%%% END MANUAL EDIT %%%%%%%%%%%%%%%%
  \end{proofcase}

  \begin{proofcase}[Case~\theproofcase, group~{\textrefp[f]{enu:proof:lem:var:Z:f:3}}: $K_0 = K_1 \neq K_0' = K_1'$, $L_0 = L_0' \neq L_1 = L_1'$ (((0) (2)) (nil nil) ((1) (0))) -- right variant]\label{case:z-est(((0 1 0 1) (0 0 2 2)) (((0) (2)) (nil nil) ((1) (0)))):right}
    In this case, we have to estimate
    \begin{equation}\label{eq:z-est(((0 1 0 1) (0 0 2 2)) (((0) (2)) (nil nil) ((1) (0)))):0}
      \sum_{K_0, L_1, K_0', L_0} \langle Th_{K_0\times L_0}, h_{K_0'\times L_0} \rangle \langle Th_{K_0\times L_1}, h_{K_0'\times L_1} \rangle.
    \end{equation}
    We put $a_{K_0, K_0', L_0} = \langle T h_{K_0\times L_0}, h_{K_0'\times L_0} \rangle$ and note
    the estimate
    \begin{equation}\label{eq:z-est(((0 1 0 1) (0 0 2 2)) (((0) (2)) (nil nil) ((1) (0)))):1}
      |a_{K_0, K_0', L_0}|
      \leq \|T\| |K_0|^{1/{p}} |L_0|^{1/{q}} |K_0'|^{1/{p'}} |L_0|^{1/{q'}}.
    \end{equation}
    Now, we write~\eqref{eq:z-est(((0 1 0 1) (0 0 2 2)) (((0) (2)) (nil nil) ((1) (0)))):0} as
    follows:
    \begin{equation}\label{eq:z-est(((0 1 0 1) (0 0 2 2)) (((0) (2)) (nil nil) ((1) (0)))):2}
      \sum_{K_0, L_1} \Bigl\langle T  h_{K_0\times L_1}, \sum_{K_0', L_0} a_{K_0, K_0', L_0} h_{K_0'\times L_1} \Bigr\rangle.
    \end{equation}
    By duality, we obtain the subsequent upper estimate for~\eqref{eq:z-est(((0 1 0 1) (0 0 2 2))
      (((0) (2)) (nil nil) ((1) (0)))):2}:
    \begin{equation}\label{eq:z-est(((0 1 0 1) (0 0 2 2)) (((0) (2)) (nil nil) ((1) (0)))):3}
      \sum_{K_0, L_1}\left\|T  h_{K_0\times L_1}\right\|_{H^p(H^q)} \left\|\sum_{K_0', L_0} a_{K_0, K_0', L_0} h_{K_0'\times L_1}\right\|_{(H^p(H^q))^*}.
    \end{equation}
    Estimate~\eqref{eq:z-est(((0 1 0 1) (0 0 2 2)) (((0) (2)) (nil nil) ((1) (0)))):1} and the
    disjointness of the dyadic intervals (see~\textrefp[J]{enu:j1}) yield
    \begin{equation}\label{eq:z-est(((0 1 0 1) (0 0 2 2)) (((0) (2)) (nil nil) ((1) (0)))):4}
      \sum_{K_0, L_1}\|T\| \left\| h_{K_0\times L_1}\right\|_{H^p(H^q)}\left\| \sum_{K_0', L_0}\max_{K_0'}\left(\|T\| |K_0|^{1/{p}} |L_0|^{1/{q}} |K_0'|^{1/{p'}} |L_0|^{1/{q'}}\right) h_{K_0'\times L_1}\right\|_{(H^p(H^q))^*}.
    \end{equation}
    Consequently, we obtain
    \begin{equation}\label{eq:z-est(((0 1 0 1) (0 0 2 2)) (((0) (2)) (nil nil) ((1) (0)))):5}
      \sum_{K_0, L_1}\|T\| \left\| h_{K_0\times L_1}\right\|_{H^p(H^q)}\sum_{L_0}\max_{K_0'}\left(\|T\| |K_0|^{1/{p}} |L_0|^{1/{q}} |K_0'|^{1/{p'}} |L_0|^{1/{q'}}\right) \left\| \sum_{K_0'}h_{K_0'\times L_1}\right\|_{(H^p(H^q))^*}.
    \end{equation}
    %%%%%%%%%%%%%%%% BEGIN MANUAL EDIT %%%%%%%%%%%%%%%%
    Thus,~\eqref{eq:z-est(((0 1 0 1) (0 0 2 2)) (((0) (2)) (nil nil) ((1) (0)))):5} is bounded from
    above by
    \begin{equation*}
      \|T\|^2 \max_{K_0'}  |K_0'|^{1/{p'}} \sum_{K_0, L_1} \sum_{L_0} |K_0|^{2/{p}} |L_0| |L_1|.
    \end{equation*}
    Using Hölder's inequality yields
    \begin{equation}\label{eq:z-est(((0 1 0 1) (0 0 2 2)) (((0) (2)) (nil nil) ((1) (0)))):6}
      \|T\|^2 \max_{K_0, K_0'}  |K_0'|^{1/{p'}} |K_0|^{2/{p}-1}.
    \end{equation}
    Inserting $|K_0|, |K_0'|\leq \alpha$ (see~\eqref{eq:alpha-small}) into~\eqref{eq:z-est(((0 1 0
      1) (0 0 2 2)) (((0) (2)) (nil nil) ((1) (0)))):6}, we obtain the estimate
    \begin{equation}\label{eq:z-est(((0 1 0 1) (0 0 2 2)) (((0) (2)) (nil nil) ((1) (0)))):7}
      \|T\|^2 \alpha^{1/{p}}.
    \end{equation}
    %%%%%%%%%%%%%%%% END MANUAL EDIT %%%%%%%%%%%%%%%%
  \end{proofcase}

  %%%%%%%%%%%%%%%% BEGIN MANUAL EDIT %%%%%%%%%%%%%%%%
  \subsubsection*{Summary of \textref[Case~]{case:z-est(((0 1 0 1) (0 0 2 2)) (((1) (2)) ((0) (0))
      (nil nil))):left} and \textref[Case~]{case:z-est(((0 1 0 1) (0 0 2 2)) (((0) (2)) (nil nil)
      ((1) (0)))):right}}
  Combining~\eqref{eq:z-est(((0 1 0 1) (0 0 2 2)) (((1) (2)) ((0) (0)) (nil nil))):7}
  with~\eqref{eq:z-est(((0 1 0 1) (0 0 2 2)) (((0) (2)) (nil nil) ((1) (0)))):7} yields
  \begin{equation}\label{eq:z-est(((0 1 0 1) (0 0 2 2)) (((1) (2)) ((0) (0)) (nil nil)))(((0 1 0 1)
      (0 0 2 2)) (((0) (2)) (nil nil) ((1) (0))))-final}
    \cond_{\theta,\varepsilon} Z^2
    \leq \|T\|^2 \alpha^{1/2}.
  \end{equation}
  %%%%%%%%%%%%%%%% END MANUAL EDIT %%%%%%%%%%%%%%%%
    
  \begin{proofcase}[Case~\theproofcase, group~{\textrefp[f]{enu:proof:lem:var:Z:f:4}}: $K_0 = K_1' \neq K_0' = K_1$, $L_0 = L_1 \neq L_0' = L_1'$ (((0) (1)) ((1) (0)) (nil nil)) -- left variant]\label{case:z-est(((0 1 1 0) (0 1 0 1)) (((0) (1)) ((1) (0)) (nil nil))):left}
    In this case, we have to estimate
    \begin{equation}\label{eq:z-est(((0 1 1 0) (0 1 0 1)) (((0) (1)) ((1) (0)) (nil nil))):0}
      \sum_{K_0, L_0', K_0', L_0} \langle Th_{K_0\times L_0}, h_{K_0'\times L_0'} \rangle \langle Th_{K_0'\times L_0}, h_{K_0\times L_0'} \rangle.
    \end{equation}
    We put $a_{K_0, K_0', L_0, L_0'} = \langle T h_{K_0\times L_0}, h_{K_0'\times L_0'} \rangle$ and
    note the estimate
    \begin{equation}\label{eq:z-est(((0 1 1 0) (0 1 0 1)) (((0) (1)) ((1) (0)) (nil nil))):1}
      |a_{K_0, K_0', L_0, L_0'}|
      \leq \|T\| |K_0|^{1/{p}} |L_0|^{1/{q}} |K_0'|^{1/{p'}} |L_0'|^{1/{q'}}.
    \end{equation}
    Now, we write~\eqref{eq:z-est(((0 1 1 0) (0 1 0 1)) (((0) (1)) ((1) (0)) (nil nil))):0} as
    follows:
    \begin{equation}\label{eq:z-est(((0 1 1 0) (0 1 0 1)) (((0) (1)) ((1) (0)) (nil nil))):2}
      \sum_{K_0, L_0'} \Bigl\langle T \sum_{K_0', L_0} a_{K_0, K_0', L_0, L_0'} h_{K_0'\times L_0},  h_{K_0\times L_0'} \Bigr\rangle.
    \end{equation}
    By duality, we obtain the subsequent upper estimate for~\eqref{eq:z-est(((0 1 1 0) (0 1 0 1))
      (((0) (1)) ((1) (0)) (nil nil))):2}:
    \begin{equation}\label{eq:z-est(((0 1 1 0) (0 1 0 1)) (((0) (1)) ((1) (0)) (nil nil))):3}
      \sum_{K_0, L_0'}\left\|T \sum_{K_0', L_0} a_{K_0, K_0', L_0, L_0'} h_{K_0'\times L_0}\right\|_{H^p(H^q)} \left\| h_{K_0\times L_0'}\right\|_{(H^p(H^q))^*}.
    \end{equation}
    Estimate~\eqref{eq:z-est(((0 1 1 0) (0 1 0 1)) (((0) (1)) ((1) (0)) (nil nil))):1} and the
    disjointness of the dyadic intervals (see~\textrefp[J]{enu:j1}) yield
    \begin{equation}\label{eq:z-est(((0 1 1 0) (0 1 0 1)) (((0) (1)) ((1) (0)) (nil nil))):4}
      \sum_{K_0, L_0'}\|T\| \left\| \sum_{K_0', L_0}\max_{K_0', L_0}\left(\|T\| |K_0|^{1/{p}} |L_0|^{1/{q}} |K_0'|^{1/{p'}} |L_0'|^{1/{q'}}\right) h_{K_0'\times L_0}\right\|_{H^p(H^q)}\left\| h_{K_0\times L_0'}\right\|_{(H^p(H^q))^*}.
    \end{equation}
    Consequently, we obtain
    \begin{equation}\label{eq:z-est(((0 1 1 0) (0 1 0 1)) (((0) (1)) ((1) (0)) (nil nil))):5}
      \sum_{K_0, L_0'}\|T\| \max_{K_0', L_0}\left(\|T\| |K_0|^{1/{p}} |L_0|^{1/{q}} |K_0'|^{1/{p'}} |L_0'|^{1/{q'}}\right) \left\| \sum_{K_0', L_0}h_{K_0'\times L_0}\right\|_{H^p(H^q)}\left\| h_{K_0\times L_0'}\right\|_{(H^p(H^q))^*}.
    \end{equation}
    %%%%%%%%%%%%%%%% BEGIN MANUAL EDIT %%%%%%%%%%%%%%%%
    Thus,~\eqref{eq:z-est(((0 1 1 0) (0 1 0 1)) (((0) (1)) ((1) (0)) (nil nil))):5} is bounded from
    above by
    \begin{equation*}
      \|T\|^2 \max_{K_0', L_0} |L_0|^{1/{q}} |K_0'|^{1/{p'}} \sum_{K_0, L_0'} |K_0| |L_0'|^{2/{q'}}.
    \end{equation*}
    Using Hölder's inequality yields
    \begin{equation}\label{eq:z-est(((0 1 1 0) (0 1 0 1)) (((0) (1)) ((1) (0)) (nil nil))):6}
      \|T\|^2 \max_{K_0', L_0, L_0'} |L_0|^{1/{q}} |K_0'|^{1/{p'}} |L_0'|^{2/{q'}-1}.
    \end{equation}
    Inserting $|K_0'|, |L_0|, |L_0'|\leq \alpha$ (see~\eqref{eq:alpha-small})
    into~\eqref{eq:z-est(((0 1 1 0) (0 1 0 1)) (((0) (1)) ((1) (0)) (nil nil))):6}, we obtain the
    estimate
    \begin{equation}\label{eq:z-est(((0 1 1 0) (0 1 0 1)) (((0) (1)) ((1) (0)) (nil nil))):7}
      \|T\|^2 \alpha^{1/{p'}+1/{q'}}.
    \end{equation}
    %%%%%%%%%%%%%%%% END MANUAL EDIT %%%%%%%%%%%%%%%%
  \end{proofcase}

  \begin{proofcase}[Case~\theproofcase, group~{\textrefp[f]{enu:proof:lem:var:Z:f:4}}: $K_0 = K_1' \neq K_0' = K_1$, $L_0 = L_1 \neq L_0' = L_1'$ (((1) (0)) (nil nil) ((0) (1))) -- right variant]\label{case:z-est(((0 1 1 0) (0 1 0 1)) (((1) (0)) (nil nil) ((0) (1)))):right}
    In this case, we have to estimate
    \begin{equation}\label{eq:z-est(((0 1 1 0) (0 1 0 1)) (((1) (0)) (nil nil) ((0) (1)))):0}
      \sum_{K_0', L_0, K_0, L_0'} \langle Th_{K_0\times L_0}, h_{K_0'\times L_0'} \rangle \langle Th_{K_0'\times L_0}, h_{K_0\times L_0'} \rangle.
    \end{equation}
    We put $a_{K_0, K_0', L_0, L_0'} = \langle T h_{K_0\times L_0}, h_{K_0'\times L_0'} \rangle$ and
    note the estimate
    \begin{equation}\label{eq:z-est(((0 1 1 0) (0 1 0 1)) (((1) (0)) (nil nil) ((0) (1)))):1}
      |a_{K_0, K_0', L_0, L_0'}|
      \leq \|T\| |K_0|^{1/{p}} |L_0|^{1/{q}} |K_0'|^{1/{p'}} |L_0'|^{1/{q'}}.
    \end{equation}
    Now, we write~\eqref{eq:z-est(((0 1 1 0) (0 1 0 1)) (((1) (0)) (nil nil) ((0) (1)))):0} as
    follows:
    \begin{equation}\label{eq:z-est(((0 1 1 0) (0 1 0 1)) (((1) (0)) (nil nil) ((0) (1)))):2}
      \sum_{K_0', L_0} \Bigl\langle T  h_{K_0'\times L_0}, \sum_{K_0, L_0'} a_{K_0, K_0', L_0, L_0'} h_{K_0\times L_0'} \Bigr\rangle.
    \end{equation}
    By duality, we obtain the subsequent upper estimate for~\eqref{eq:z-est(((0 1 1 0) (0 1 0 1))
      (((1) (0)) (nil nil) ((0) (1)))):2}:
    \begin{equation}\label{eq:z-est(((0 1 1 0) (0 1 0 1)) (((1) (0)) (nil nil) ((0) (1)))):3}
      \sum_{K_0', L_0}\left\|T  h_{K_0'\times L_0}\right\|_{H^p(H^q)} \left\|\sum_{K_0, L_0'} a_{K_0, K_0', L_0, L_0'} h_{K_0\times L_0'}\right\|_{(H^p(H^q))^*}.
    \end{equation}
    Estimate~\eqref{eq:z-est(((0 1 1 0) (0 1 0 1)) (((1) (0)) (nil nil) ((0) (1)))):1} and the
    disjointness of the dyadic intervals (see~\textrefp[J]{enu:j1}) yield
    \begin{equation}\label{eq:z-est(((0 1 1 0) (0 1 0 1)) (((1) (0)) (nil nil) ((0) (1)))):4}
      \sum_{K_0', L_0}\|T\| \left\| h_{K_0'\times L_0}\right\|_{H^p(H^q)}\left\| \sum_{K_0, L_0'}\max_{K_0, L_0'}\left(\|T\| |K_0|^{1/{p}} |L_0|^{1/{q}} |K_0'|^{1/{p'}} |L_0'|^{1/{q'}}\right) h_{K_0\times L_0'}\right\|_{(H^p(H^q))^*}.
    \end{equation}
    Consequently, we obtain
    \begin{equation}\label{eq:z-est(((0 1 1 0) (0 1 0 1)) (((1) (0)) (nil nil) ((0) (1)))):5}
      \sum_{K_0', L_0}\|T\| \left\| h_{K_0'\times L_0}\right\|_{H^p(H^q)}\max_{K_0, L_0'}\left(\|T\| |K_0|^{1/{p}} |L_0|^{1/{q}} |K_0'|^{1/{p'}} |L_0'|^{1/{q'}}\right) \left\| \sum_{K_0, L_0'}h_{K_0\times L_0'}\right\|_{(H^p(H^q))^*}.
    \end{equation}
    %%%%%%%%%%%%%%%% BEGIN MANUAL EDIT %%%%%%%%%%%%%%%%
    Thus,~\eqref{eq:z-est(((0 1 1 0) (0 1 0 1)) (((1) (0)) (nil nil) ((0) (1)))):5} is bounded from
    above by
    \begin{equation*}
      \|T\|^2 \max_{K_0, L_0'} |K_0|^{1/{p}}  |L_0'|^{1/{q'}} \sum_{K_0', L_0} |L_0|^{2/{q}} |K_0'|.
    \end{equation*}
    Using Hölder's inequality yields
    \begin{equation}\label{eq:z-est(((0 1 1 0) (0 1 0 1)) (((1) (0)) (nil nil) ((0) (1)))):6}
      \|T\|^2 \max_{K_0, L_0, L_0'} |K_0|^{1/{p}}  |L_0'|^{1/{q'}} |L_0|^{2/{q}-1}.
    \end{equation}
    Inserting $|K_0|, |L_0|, |L_0'|\leq \alpha$ (see~\eqref{eq:alpha-small})
    into~\eqref{eq:z-est(((0 1 1 0) (0 1 0 1)) (((1) (0)) (nil nil) ((0) (1)))):6}, we obtain the
    estimate
    \begin{equation}\label{eq:z-est(((0 1 1 0) (0 1 0 1)) (((1) (0)) (nil nil) ((0) (1)))):7}
      \|T\|^2 \alpha^{1/{p}+1/{q}}.
    \end{equation}
    %%%%%%%%%%%%%%%% END MANUAL EDIT %%%%%%%%%%%%%%%%
  \end{proofcase}

  %%%%%%%%%%%%%%%% BEGIN MANUAL EDIT %%%%%%%%%%%%%%%%
  \subsubsection*{Summary of \textref[Case~]{case:z-est(((0 1 1 0) (0 1 0 1)) (((0) (1)) ((1) (0))
      (nil nil))):left} and \textref[Case~]{case:z-est(((0 1 1 0) (0 1 0 1)) (((1) (0)) (nil nil)
      ((0) (1)))):right}}
  Combining~\eqref{eq:z-est(((0 1 1 0) (0 1 0 1)) (((0) (1)) ((1) (0)) (nil nil))):7}
  with~\eqref{eq:z-est(((0 1 1 0) (0 1 0 1)) (((1) (0)) (nil nil) ((0) (1)))):7} yields
  \begin{equation}\label{eq:z-est(((0 1 1 0) (0 1 0 1)) (((0) (1)) ((1) (0)) (nil nil)))(((0 1 1 0)
      (0 1 0 1)) (((1) (0)) (nil nil) ((0) (1))))-final}
    \cond_{\theta,\varepsilon} Z^2
    \leq \|T\|^2 \alpha.
  \end{equation}
  %%%%%%%%%%%%%%%% END MANUAL EDIT %%%%%%%%%%%%%%%%

  \begin{proofcase}[Case~\theproofcase, group~{\textrefp[f]{enu:proof:lem:var:Z:f:5}}: $K_0 = K_1' \neq K_0' = K_1$, $L_0 = L_1' \neq L_0' = L_1$ (((0) (0)) ((1) (1)) (nil nil)) -- left variant]\label{case:z-est(((0 1 1 0) (0 1 1 0)) (((0) (0)) ((1) (1)) (nil nil))):left}
    In this case, we have to estimate
    \begin{equation}\label{eq:z-est(((0 1 1 0) (0 1 1 0)) (((0) (0)) ((1) (1)) (nil nil))):0}
      \sum_{K_0, L_0, K_0', L_0'} \langle Th_{K_0\times L_0}, h_{K_0'\times L_0'} \rangle \langle Th_{K_0'\times L_0'}, h_{K_0\times L_0} \rangle.
    \end{equation}
    We put $a_{K_0, K_0', L_0, L_0'} = \langle T h_{K_0\times L_0}, h_{K_0'\times L_0'} \rangle$ and
    note the estimate
    \begin{equation}\label{eq:z-est(((0 1 1 0) (0 1 1 0)) (((0) (0)) ((1) (1)) (nil nil))):1}
      |a_{K_0, K_0', L_0, L_0'}|
      \leq \|T\| |K_0|^{1/{p}} |L_0|^{1/{q}} |K_0'|^{1/{p'}} |L_0'|^{1/{q'}}.
    \end{equation}
    Now, we write~\eqref{eq:z-est(((0 1 1 0) (0 1 1 0)) (((0) (0)) ((1) (1)) (nil nil))):0} as
    follows:
    \begin{equation}\label{eq:z-est(((0 1 1 0) (0 1 1 0)) (((0) (0)) ((1) (1)) (nil nil))):2}
      \sum_{K_0, L_0} \Bigl\langle T \sum_{K_0', L_0'} a_{K_0, K_0', L_0, L_0'} h_{K_0'\times L_0'},  h_{K_0\times L_0} \Bigr\rangle.
    \end{equation}
    By duality, we obtain the subsequent upper estimate for~\eqref{eq:z-est(((0 1 1 0) (0 1 1 0))
      (((0) (0)) ((1) (1)) (nil nil))):2}:
    \begin{equation}\label{eq:z-est(((0 1 1 0) (0 1 1 0)) (((0) (0)) ((1) (1)) (nil nil))):3}
      \sum_{K_0, L_0}\left\|T \sum_{K_0', L_0'} a_{K_0, K_0', L_0, L_0'} h_{K_0'\times L_0'}\right\|_{H^p(H^q)} \left\| h_{K_0\times L_0}\right\|_{(H^p(H^q))^*}.
    \end{equation}
    Estimate~\eqref{eq:z-est(((0 1 1 0) (0 1 1 0)) (((0) (0)) ((1) (1)) (nil nil))):1} and the
    disjointness of the dyadic intervals (see~\textrefp[J]{enu:j1}) yield
    \begin{equation}\label{eq:z-est(((0 1 1 0) (0 1 1 0)) (((0) (0)) ((1) (1)) (nil nil))):4}
      \sum_{K_0, L_0}\|T\| \left\| \sum_{K_0', L_0'}\max_{K_0', L_0'}\left(\|T\| |K_0|^{1/{p}} |L_0|^{1/{q}} |K_0'|^{1/{p'}} |L_0'|^{1/{q'}}\right) h_{K_0'\times L_0'}\right\|_{H^p(H^q)}\left\| h_{K_0\times L_0}\right\|_{(H^p(H^q))^*}.
    \end{equation}
    Consequently, we obtain
    \begin{equation}\label{eq:z-est(((0 1 1 0) (0 1 1 0)) (((0) (0)) ((1) (1)) (nil nil))):5}
      \sum_{K_0, L_0}\|T\| \max_{K_0', L_0'}\left(\|T\| |K_0|^{1/{p}} |L_0|^{1/{q}} |K_0'|^{1/{p'}} |L_0'|^{1/{q'}}\right) \left\| \sum_{K_0', L_0'}h_{K_0'\times L_0'}\right\|_{H^p(H^q)}\left\| h_{K_0\times L_0}\right\|_{(H^p(H^q))^*}.
    \end{equation}
    %%%%%%%%%%%%%%%% BEGIN MANUAL EDIT %%%%%%%%%%%%%%%%
    Thus,~\eqref{eq:z-est(((0 1 1 0) (0 1 1 0)) (((0) (0)) ((1) (1)) (nil nil))):5} is bounded from
    above by
    \begin{equation}\label{eq:z-est(((0 1 1 0) (0 1 1 0)) (((0) (0)) ((1) (1)) (nil nil))):6}
      \|T\|^2 \max_{K_0', L_0'} |K_0'|^{1/{p'}} |L_0'|^{1/{q'}} \sum_{K_0, L_0}  |K_0| |L_0|
      \leq \|T\|^2 \max_{K_0', L_0'} |K_0'|^{1/{p'}} |L_0'|^{1/{q'}}.
    \end{equation}
    Inserting $|K_0'|, |L_0'|\leq \alpha$ (see~\eqref{eq:alpha-small}) into~\eqref{eq:z-est(((0 1 1
      0) (0 1 1 0)) (((0) (0)) ((1) (1)) (nil nil))):6}, we obtain the estimate
    \begin{equation}\label{eq:z-est(((0 1 1 0) (0 1 1 0)) (((0) (0)) ((1) (1)) (nil nil))):7}
      \|T\|^2 \alpha^{1/{p'}+1/{q'}}.
    \end{equation}
    %%%%%%%%%%%%%%%% END MANUAL EDIT %%%%%%%%%%%%%%%%
  \end{proofcase}

  \begin{proofcase}[Case~\theproofcase, group~{\textrefp[f]{enu:proof:lem:var:Z:f:5}}: $K_0 = K_1' \neq K_0' = K_1$, $L_0 = L_1' \neq L_0' = L_1$ (((1) (1)) (nil nil) ((0) (0))) -- right variant]\label{case:z-est(((0 1 1 0) (0 1 1 0)) (((1) (1)) (nil nil) ((0) (0)))):right}
    In this case, we have to estimate
    \begin{equation}\label{eq:z-est(((0 1 1 0) (0 1 1 0)) (((1) (1)) (nil nil) ((0) (0)))):0}
      \sum_{K_0', L_0', K_0, L_0} \langle Th_{K_0\times L_0}, h_{K_0'\times L_0'} \rangle \langle Th_{K_0'\times L_0'}, h_{K_0\times L_0} \rangle.
    \end{equation}
    We put $a_{K_0, K_0', L_0, L_0'} = \langle T h_{K_0\times L_0}, h_{K_0'\times L_0'} \rangle$ and
    note the estimate
    \begin{equation}\label{eq:z-est(((0 1 1 0) (0 1 1 0)) (((1) (1)) (nil nil) ((0) (0)))):1}
      |a_{K_0, K_0', L_0, L_0'}|
      \leq \|T\| |K_0|^{1/{p}} |L_0|^{1/{q}} |K_0'|^{1/{p'}} |L_0'|^{1/{q'}}.
    \end{equation}
    Now, we write~\eqref{eq:z-est(((0 1 1 0) (0 1 1 0)) (((1) (1)) (nil nil) ((0) (0)))):0} as
    follows:
    \begin{equation}\label{eq:z-est(((0 1 1 0) (0 1 1 0)) (((1) (1)) (nil nil) ((0) (0)))):2}
      \sum_{K_0', L_0'} \Bigl\langle T  h_{K_0'\times L_0'}, \sum_{K_0, L_0} a_{K_0, K_0', L_0, L_0'} h_{K_0\times L_0} \Bigr\rangle.
    \end{equation}
    By duality, we obtain the subsequent upper estimate for~\eqref{eq:z-est(((0 1 1 0) (0 1 1 0))
      (((1) (1)) (nil nil) ((0) (0)))):2}:
    \begin{equation}\label{eq:z-est(((0 1 1 0) (0 1 1 0)) (((1) (1)) (nil nil) ((0) (0)))):3}
      \sum_{K_0', L_0'}\left\|T  h_{K_0'\times L_0'}\right\|_{H^p(H^q)} \left\|\sum_{K_0, L_0} a_{K_0, K_0', L_0, L_0'} h_{K_0\times L_0}\right\|_{(H^p(H^q))^*}.
    \end{equation}
    Estimate~\eqref{eq:z-est(((0 1 1 0) (0 1 1 0)) (((1) (1)) (nil nil) ((0) (0)))):1} and the
    disjointness of the dyadic intervals (see~\textrefp[J]{enu:j1}) yield
    \begin{equation}\label{eq:z-est(((0 1 1 0) (0 1 1 0)) (((1) (1)) (nil nil) ((0) (0)))):4}
      \sum_{K_0', L_0'}\|T\| \left\| h_{K_0'\times L_0'}\right\|_{H^p(H^q)}\left\| \sum_{K_0, L_0}\max_{K_0, L_0}\left(\|T\| |K_0|^{1/{p}} |L_0|^{1/{q}} |K_0'|^{1/{p'}} |L_0'|^{1/{q'}}\right) h_{K_0\times L_0}\right\|_{(H^p(H^q))^*}.
    \end{equation}
    Consequently, we obtain
    \begin{equation}\label{eq:z-est(((0 1 1 0) (0 1 1 0)) (((1) (1)) (nil nil) ((0) (0)))):5}
      \sum_{K_0', L_0'}\|T\| \left\| h_{K_0'\times L_0'}\right\|_{H^p(H^q)}\max_{K_0, L_0}\left(\|T\| |K_0|^{1/{p}} |L_0|^{1/{q}} |K_0'|^{1/{p'}} |L_0'|^{1/{q'}}\right) \left\| \sum_{K_0, L_0}h_{K_0\times L_0}\right\|_{(H^p(H^q))^*}.
    \end{equation}
    %%%%%%%%%%%%%%%% BEGIN MANUAL EDIT %%%%%%%%%%%%%%%%
    Thus,~\eqref{eq:z-est(((0 1 1 0) (0 1 1 0)) (((1) (1)) (nil nil) ((0) (0)))):5} is bounded from
    above by
    \begin{equation}\label{eq:z-est(((0 1 1 0) (0 1 1 0)) (((1) (1)) (nil nil) ((0) (0)))):6}
      \|T\|^2 \max_{K_0, L_0} |K_0|^{1/{p}} |L_0|^{1/{q}} \sum_{K_0', L_0'} |K_0'| |L_0'|
      \leq \|T\|^2 \max_{K_0, L_0} |K_0|^{1/{p}} |L_0|^{1/{q}}.
    \end{equation}
    Inserting $|K_0|, |L_0|\leq \alpha$ (see~\eqref{eq:alpha-small}) into~\eqref{eq:z-est(((0 1 1 0)
      (0 1 1 0)) (((1) (1)) (nil nil) ((0) (0)))):6}, we obtain the estimate
    \begin{equation}\label{eq:z-est(((0 1 1 0) (0 1 1 0)) (((1) (1)) (nil nil) ((0) (0)))):7}
      \|T\|^2 \alpha^{1/{p}+1/{q}}.
    \end{equation}
    %%%%%%%%%%%%%%%% END MANUAL EDIT %%%%%%%%%%%%%%%%
  \end{proofcase}

  %%%%%%%%%%%%%%%% BEGIN MANUAL EDIT %%%%%%%%%%%%%%%%
  \subsubsection*{Summary of \textref[Case~]{case:z-est(((0 1 1 0) (0 1 1 0)) (((0) (0)) ((1) (1))
      (nil nil))):left} and \textref[Case~]{case:z-est(((0 1 1 0) (0 1 1 0)) (((1) (1)) (nil nil)
      ((0) (0)))):right}}
  Combining~\eqref{eq:z-est(((0 1 1 0) (0 1 1 0)) (((0) (0)) ((1) (1)) (nil nil))):7}
  with~\eqref{eq:z-est(((0 1 1 0) (0 1 1 0)) (((1) (1)) (nil nil) ((0) (0)))):7} yields
  \begin{equation}\label{eq:z-est(((0 1 1 0) (0 1 1 0)) (((0) (0)) ((1) (1)) (nil nil)))(((0 1 1 0)
      (0 1 1 0)) (((1) (1)) (nil nil) ((0) (0))))-final}
    \cond_{\theta,\varepsilon} Z^2
    \leq \|T\|^2 \alpha.
  \end{equation}
  %%%%%%%%%%%%%%%% END MANUAL EDIT %%%%%%%%%%%%%%%%

  \begin{proofcase}[Case~\theproofcase, group~{\textrefp[f]{enu:proof:lem:var:Z:f:6}}: $K_0 = K_1' \neq K_0' = K_1$, $L_0 = L_0' \neq L_1 = L_1'$ (((0) (2)) ((1) (0)) (nil nil)) -- left variant]\label{case:z-est(((0 1 1 0) (0 0 2 2)) (((0) (2)) ((1) (0)) (nil nil))):left}
    In this case, we have to estimate
    \begin{equation}\label{eq:z-est(((0 1 1 0) (0 0 2 2)) (((0) (2)) ((1) (0)) (nil nil))):0}
      \sum_{K_0, L_1, K_0', L_0} \langle Th_{K_0\times L_0}, h_{K_0'\times L_0} \rangle \langle Th_{K_0'\times L_1}, h_{K_0\times L_1} \rangle.
    \end{equation}
    We put $a_{K_0, K_0', L_0} = \langle T h_{K_0\times L_0}, h_{K_0'\times L_0} \rangle$ and note
    the estimate
    \begin{equation}\label{eq:z-est(((0 1 1 0) (0 0 2 2)) (((0) (2)) ((1) (0)) (nil nil))):1}
      |a_{K_0, K_0', L_0}|
      \leq \|T\| |K_0|^{1/{p}} |L_0|^{1/{q}} |K_0'|^{1/{p'}} |L_0|^{1/{q'}}.
    \end{equation}
    Now, we write~\eqref{eq:z-est(((0 1 1 0) (0 0 2 2)) (((0) (2)) ((1) (0)) (nil nil))):0} as
    follows:
    \begin{equation}\label{eq:z-est(((0 1 1 0) (0 0 2 2)) (((0) (2)) ((1) (0)) (nil nil))):2}
      \sum_{K_0, L_1} \Bigl\langle T \sum_{K_0', L_0} a_{K_0, K_0', L_0} h_{K_0'\times L_1},  h_{K_0\times L_1} \Bigr\rangle.
    \end{equation}
    By duality, we obtain the subsequent upper estimate for~\eqref{eq:z-est(((0 1 1 0) (0 0 2 2))
      (((0) (2)) ((1) (0)) (nil nil))):2}:
    \begin{equation}\label{eq:z-est(((0 1 1 0) (0 0 2 2)) (((0) (2)) ((1) (0)) (nil nil))):3}
      \sum_{K_0, L_1}\left\|T \sum_{K_0', L_0} a_{K_0, K_0', L_0} h_{K_0'\times L_1}\right\|_{H^p(H^q)} \left\| h_{K_0\times L_1}\right\|_{(H^p(H^q))^*}.
    \end{equation}
    Estimate~\eqref{eq:z-est(((0 1 1 0) (0 0 2 2)) (((0) (2)) ((1) (0)) (nil nil))):1} and the
    disjointness of the dyadic intervals (see~\textrefp[J]{enu:j1}) yield
    \begin{equation}\label{eq:z-est(((0 1 1 0) (0 0 2 2)) (((0) (2)) ((1) (0)) (nil nil))):4}
      \sum_{K_0, L_1}\|T\| \left\| \sum_{K_0', L_0}\max_{K_0'}\left(\|T\| |K_0|^{1/{p}} |L_0|^{1/{q}} |K_0'|^{1/{p'}} |L_0|^{1/{q'}}\right) h_{K_0'\times L_1}\right\|_{H^p(H^q)}\left\| h_{K_0\times L_1}\right\|_{(H^p(H^q))^*}.
    \end{equation}
    Consequently, we obtain
    \begin{equation}\label{eq:z-est(((0 1 1 0) (0 0 2 2)) (((0) (2)) ((1) (0)) (nil nil))):5}
      \sum_{K_0, L_1}\|T\| \sum_{L_0}\max_{K_0'}\left(\|T\| |K_0|^{1/{p}} |L_0|^{1/{q}} |K_0'|^{1/{p'}} |L_0|^{1/{q'}}\right) \left\| \sum_{K_0'}h_{K_0'\times L_1}\right\|_{H^p(H^q)}\left\| h_{K_0\times L_1}\right\|_{(H^p(H^q))^*}.
    \end{equation}
    %%%%%%%%%%%%%%%% BEGIN MANUAL EDIT %%%%%%%%%%%%%%%%
    Thus,~\eqref{eq:z-est(((0 1 1 0) (0 0 2 2)) (((0) (2)) ((1) (0)) (nil nil))):5} is bounded from
    above by
    \begin{equation}\label{eq:z-est(((0 1 1 0) (0 0 2 2)) (((0) (2)) ((1) (0)) (nil nil))):6}
      \|T\|^2 \max_{K_0'} |K_0'|^{1/{p'}} \sum_{K_0, L_1} \sum_{L_0} |K_0| |L_0| |L_1|
      \leq \|T\|^2 \max_{K_0'} |K_0'|^{1/{p'}}.
    \end{equation}
    Inserting $|K_0'|\leq \alpha$ (see~\eqref{eq:alpha-small}) into~\eqref{eq:z-est(((0 1 1 0) (0 0
      2 2)) (((0) (2)) ((1) (0)) (nil nil))):6}, we obtain the estimate
    \begin{equation}\label{eq:z-est(((0 1 1 0) (0 0 2 2)) (((0) (2)) ((1) (0)) (nil nil))):7}
      \|T\|^2 \alpha^{1/{p'}}.
    \end{equation}
    %%%%%%%%%%%%%%%% END MANUAL EDIT %%%%%%%%%%%%%%%%
  \end{proofcase}

  \begin{proofcase}[Case~\theproofcase, group~{\textrefp[f]{enu:proof:lem:var:Z:f:6}}: $K_0 = K_1' \neq K_0' = K_1$, $L_0 = L_0' \neq L_1 = L_1'$
    (((1) (2)) (nil nil) ((0) (0))) -- right variant]\label{case:z-est(((0 1 1 0) (0 0 2 2)) (((1)
      (2)) (nil nil) ((0) (0)))):right}
    In this case, we have to estimate
    \begin{equation}\label{eq:z-est(((0 1 1 0) (0 0 2 2)) (((1) (2)) (nil nil) ((0) (0)))):0}
      \sum_{K_0', L_1, K_0, L_0} \langle Th_{K_0\times L_0}, h_{K_0'\times L_0} \rangle \langle Th_{K_0'\times L_1}, h_{K_0\times L_1} \rangle.
    \end{equation}
    We put $a_{K_0, K_0', L_0} = \langle T h_{K_0\times L_0}, h_{K_0'\times L_0} \rangle$ and note
    the estimate
    \begin{equation}\label{eq:z-est(((0 1 1 0) (0 0 2 2)) (((1) (2)) (nil nil) ((0) (0)))):1}
      |a_{K_0, K_0', L_0}|
      \leq \|T\| |K_0|^{1/{p}} |L_0|^{1/{q}} |K_0'|^{1/{p'}} |L_0|^{1/{q'}}.
    \end{equation}
    Now, we write~\eqref{eq:z-est(((0 1 1 0) (0 0 2 2)) (((1) (2)) (nil nil) ((0) (0)))):0} as
    follows:
    \begin{equation}\label{eq:z-est(((0 1 1 0) (0 0 2 2)) (((1) (2)) (nil nil) ((0) (0)))):2}
      \sum_{K_0', L_1} \Bigl\langle T  h_{K_0'\times L_1}, \sum_{K_0, L_0} a_{K_0, K_0', L_0} h_{K_0\times L_1} \Bigr\rangle.
    \end{equation}
    By duality, we obtain the subsequent upper estimate for~\eqref{eq:z-est(((0 1 1 0) (0 0 2 2))
      (((1) (2)) (nil nil) ((0) (0)))):2}:
    \begin{equation}\label{eq:z-est(((0 1 1 0) (0 0 2 2)) (((1) (2)) (nil nil) ((0) (0)))):3}
      \sum_{K_0', L_1}\left\|T  h_{K_0'\times L_1}\right\|_{H^p(H^q)} \left\|\sum_{K_0, L_0} a_{K_0, K_0', L_0} h_{K_0\times L_1}\right\|_{(H^p(H^q))^*}.
    \end{equation}
    Estimate~\eqref{eq:z-est(((0 1 1 0) (0 0 2 2)) (((1) (2)) (nil nil) ((0) (0)))):1} and the
    disjointness of the dyadic intervals (see~\textrefp[J]{enu:j1}) yield
    \begin{equation}\label{eq:z-est(((0 1 1 0) (0 0 2 2)) (((1) (2)) (nil nil) ((0) (0)))):4}
      \sum_{K_0', L_1}\|T\| \left\| h_{K_0'\times L_1}\right\|_{H^p(H^q)}\left\| \sum_{K_0, L_0}\max_{K_0}\left(\|T\| |K_0|^{1/{p}} |L_0|^{1/{q}} |K_0'|^{1/{p'}} |L_0|^{1/{q'}}\right) h_{K_0\times L_1}\right\|_{(H^p(H^q))^*}.
    \end{equation}
    Consequently, we obtain
    \begin{equation}\label{eq:z-est(((0 1 1 0) (0 0 2 2)) (((1) (2)) (nil nil) ((0) (0)))):5}
      \sum_{K_0', L_1}\|T\| \left\| h_{K_0'\times L_1}\right\|_{H^p(H^q)}\sum_{L_0}\max_{K_0}\left(\|T\| |K_0|^{1/{p}} |L_0|^{1/{q}} |K_0'|^{1/{p'}} |L_0|^{1/{q'}}\right) \left\| \sum_{K_0}h_{K_0\times L_1}\right\|_{(H^p(H^q))^*}.
    \end{equation}
    %%%%%%%%%%%%%%%% BEGIN MANUAL EDIT %%%%%%%%%%%%%%%%
    Thus,~\eqref{eq:z-est(((0 1 1 0) (0 0 2 2)) (((1) (2)) (nil nil) ((0) (0)))):5} is bounded from
    above by
    \begin{equation}\label{eq:z-est(((0 1 1 0) (0 0 2 2)) (((1) (2)) (nil nil) ((0) (0)))):6}
      \|T\|^2 \max_{K_0}  |K_0|^{1/{p}} \sum_{K_0', L_1} \sum_{L_0} |L_0| |K_0'| |L_1|
      \leq \|T\|^2 \max_{K_0}  |K_0|^{1/{p}}.
    \end{equation}
    Inserting $|K_0|\leq \alpha$ (see~\eqref{eq:alpha-small}) into~\eqref{eq:z-est(((0 1 1 0) (0 0 2
      2)) (((1) (2)) (nil nil) ((0) (0)))):6}, we obtain the estimate
    \begin{equation}\label{eq:z-est(((0 1 1 0) (0 0 2 2)) (((1) (2)) (nil nil) ((0) (0)))):7}
      \|T\|^2 \alpha^{1/{p}}.
    \end{equation}
    %%%%%%%%%%%%%%%% END MANUAL EDIT %%%%%%%%%%%%%%%%
  \end{proofcase}

  %%%%%%%%%%%%%%%% BEGIN MANUAL EDIT %%%%%%%%%%%%%%%%
  \subsubsection*{Summary of \textref[Case~]{case:z-est(((0 1 1 0) (0 0 2 2)) (((0) (2)) ((1) (0))
      (nil nil))):left} and \textref[Case~]{case:z-est(((0 1 1 0) (0 0 2 2)) (((1) (2)) (nil nil)
      ((0) (0)))):right}}
  Combining~\eqref{eq:z-est(((0 1 1 0) (0 0 2 2)) (((0) (2)) ((1) (0)) (nil nil))):7}
  with~\eqref{eq:z-est(((0 1 1 0) (0 0 2 2)) (((1) (2)) (nil nil) ((0) (0)))):7} yields
  \begin{equation}\label{eq:z-est(((0 1 1 0) (0 0 2 2)) (((0) (2)) ((1) (0)) (nil nil)))(((0 1 1 0)
      (0 0 2 2)) (((1) (2)) (nil nil) ((0) (0))))-final}
    \cond_{\theta,\varepsilon} Z^2
    \leq \|T\|^2 \alpha^{1/2}.
  \end{equation}
  %%%%%%%%%%%%%%%% END MANUAL EDIT %%%%%%%%%%%%%%%%

  \begin{proofcase}[Case~\theproofcase, group~{\textrefp[f]{enu:proof:lem:var:Z:f:7}}: $K_0 = K_0' \neq K_1 = K_1'$, $L_0 = L_1 \neq L_0' = L_1'$
    (((2) (1)) ((0) (0)) (nil nil)) -- left variant]\label{case:z-est(((0 0 2 2) (0 1 0 1)) (((2)
      (1)) ((0) (0)) (nil nil))):left}
    In this case, we have to estimate
    \begin{equation}\label{eq:z-est(((0 0 2 2) (0 1 0 1)) (((2) (1)) ((0) (0)) (nil nil))):0}
      \sum_{K_1, L_0', K_0, L_0} \langle Th_{K_0\times L_0}, h_{K_0\times L_0'} \rangle \langle Th_{K_1\times L_0}, h_{K_1\times L_0'} \rangle.
    \end{equation}
    We put $a_{K_0, L_0, L_0'} = \langle T h_{K_0\times L_0}, h_{K_0\times L_0'} \rangle$ and note
    the estimate
    \begin{equation}\label{eq:z-est(((0 0 2 2) (0 1 0 1)) (((2) (1)) ((0) (0)) (nil nil))):1}
      |a_{K_0, L_0, L_0'}|
      \leq \|T\| |K_0|^{1/{p}} |L_0|^{1/{q}} |K_0|^{1/{p'}} |L_0'|^{1/{q'}}.
    \end{equation}
    Now, we write~\eqref{eq:z-est(((0 0 2 2) (0 1 0 1)) (((2) (1)) ((0) (0)) (nil nil))):0} as
    follows:
    \begin{equation}\label{eq:z-est(((0 0 2 2) (0 1 0 1)) (((2) (1)) ((0) (0)) (nil nil))):2}
      \sum_{K_1, L_0'} \Bigl\langle T \sum_{K_0, L_0} a_{K_0, L_0, L_0'} h_{K_1\times L_0},  h_{K_1\times L_0'} \Bigr\rangle.
    \end{equation}
    By duality, we obtain the subsequent upper estimate for~\eqref{eq:z-est(((0 0 2 2) (0 1 0 1))
      (((2) (1)) ((0) (0)) (nil nil))):2}:
    \begin{equation}\label{eq:z-est(((0 0 2 2) (0 1 0 1)) (((2) (1)) ((0) (0)) (nil nil))):3}
      \sum_{K_1, L_0'}\left\|T \sum_{K_0, L_0} a_{K_0, L_0, L_0'} h_{K_1\times L_0}\right\|_{H^p(H^q)} \left\| h_{K_1\times L_0'}\right\|_{(H^p(H^q))^*}.
    \end{equation}
    Estimate~\eqref{eq:z-est(((0 0 2 2) (0 1 0 1)) (((2) (1)) ((0) (0)) (nil nil))):1} and the
    disjointness of the dyadic intervals (see~\textrefp[J]{enu:j1}) yield
    \begin{equation}\label{eq:z-est(((0 0 2 2) (0 1 0 1)) (((2) (1)) ((0) (0)) (nil nil))):4}
      \sum_{K_1, L_0'}\|T\| \left\| \sum_{K_0, L_0}\max_{L_0}\left(\|T\| |K_0|^{1/{p}} |L_0|^{1/{q}} |K_0|^{1/{p'}} |L_0'|^{1/{q'}}\right) h_{K_1\times L_0}\right\|_{H^p(H^q)}\left\| h_{K_1\times L_0'}\right\|_{(H^p(H^q))^*}.
    \end{equation}
    Consequently, we obtain
    \begin{equation}\label{eq:z-est(((0 0 2 2) (0 1 0 1)) (((2) (1)) ((0) (0)) (nil nil))):5}
      \sum_{K_1, L_0'}\|T\| \sum_{K_0}\max_{L_0}\left(\|T\| |K_0|^{1/{p}} |L_0|^{1/{q}} |K_0|^{1/{p'}} |L_0'|^{1/{q'}}\right) \left\| \sum_{L_0}h_{K_1\times L_0}\right\|_{H^p(H^q)}\left\| h_{K_1\times L_0'}\right\|_{(H^p(H^q))^*}.
    \end{equation}
    %%%%%%%%%%%%%%%% BEGIN MANUAL EDIT %%%%%%%%%%%%%%%%
    Thus,~\eqref{eq:z-est(((0 0 2 2) (0 1 0 1)) (((2) (1)) ((0) (0)) (nil nil))):5} is bounded from
    above by
    \begin{equation*}
      \|T\|^2 \max_{L_0}  |L_0|^{1/{q}} \sum_{K_1, L_0'} \sum_{K_0}  |K_0| |L_0'|^{2/{q'}} |K_1|.
    \end{equation*}
    Using Hölder's inequality yields
    \begin{equation}\label{eq:z-est(((0 0 2 2) (0 1 0 1)) (((2) (1)) ((0) (0)) (nil nil))):6}
      \|T\|^2 \max_{L_0, L_0'}  |L_0|^{1/{q}} |L_0'|^{2/{q'}-1}.
    \end{equation}
    Inserting $|L_0|, |L_0'|\leq \alpha$ (see~\eqref{eq:alpha-small}) into~\eqref{eq:z-est(((0 0 2
      2) (0 1 0 1)) (((2) (1)) ((0) (0)) (nil nil))):6}, we obtain the estimate
    \begin{equation}\label{eq:z-est(((0 0 2 2) (0 1 0 1)) (((2) (1)) ((0) (0)) (nil nil))):7}
      \|T\|^2 \alpha^{1/{q'}}.
    \end{equation}
    %%%%%%%%%%%%%%%% END MANUAL EDIT %%%%%%%%%%%%%%%%
  \end{proofcase}

  \begin{proofcase}[Case~\theproofcase, group~{\textrefp[f]{enu:proof:lem:var:Z:f:7}}: $K_0 = K_0' \neq K_1 = K_1'$, $L_0 = L_1 \neq L_0' = L_1'$ (((2) (0)) (nil nil) ((0) (1))) -- right variant]\label{case:z-est(((0 0 2 2) (0 1 0 1)) (((2) (0)) (nil nil) ((0) (1)))):right}
    In this case, we have to estimate
    \begin{equation}\label{eq:z-est(((0 0 2 2) (0 1 0 1)) (((2) (0)) (nil nil) ((0) (1)))):0}
      \sum_{K_1, L_0, K_0, L_0'} \langle Th_{K_0\times L_0}, h_{K_0\times L_0'} \rangle \langle Th_{K_1\times L_0}, h_{K_1\times L_0'} \rangle.
    \end{equation}
    We put $a_{K_0, L_0, L_0'} = \langle T h_{K_0\times L_0}, h_{K_0\times L_0'} \rangle$ and note
    the estimate
    \begin{equation}\label{eq:z-est(((0 0 2 2) (0 1 0 1)) (((2) (0)) (nil nil) ((0) (1)))):1}
      |a_{K_0, L_0, L_0'}|
      \leq \|T\| |K_0|^{1/{p}} |L_0|^{1/{q}} |K_0|^{1/{p'}} |L_0'|^{1/{q'}}.
    \end{equation}
    Now, we write~\eqref{eq:z-est(((0 0 2 2) (0 1 0 1)) (((2) (0)) (nil nil) ((0) (1)))):0} as
    follows:
    \begin{equation}\label{eq:z-est(((0 0 2 2) (0 1 0 1)) (((2) (0)) (nil nil) ((0) (1)))):2}
      \sum_{K_1, L_0} \Bigl\langle T  h_{K_1\times L_0}, \sum_{K_0, L_0'} a_{K_0, L_0, L_0'} h_{K_1\times L_0'} \Bigr\rangle.
    \end{equation}
    By duality, we obtain the subsequent upper estimate for~\eqref{eq:z-est(((0 0 2 2) (0 1 0 1))
      (((2) (0)) (nil nil) ((0) (1)))):2}:
    \begin{equation}\label{eq:z-est(((0 0 2 2) (0 1 0 1)) (((2) (0)) (nil nil) ((0) (1)))):3}
      \sum_{K_1, L_0}\left\|T  h_{K_1\times L_0}\right\|_{H^p(H^q)} \left\|\sum_{K_0, L_0'} a_{K_0, L_0, L_0'} h_{K_1\times L_0'}\right\|_{(H^p(H^q))^*}.
    \end{equation}
    Estimate~\eqref{eq:z-est(((0 0 2 2) (0 1 0 1)) (((2) (0)) (nil nil) ((0) (1)))):1} and the
    disjointness of the dyadic intervals (see~\textrefp[J]{enu:j1}) yield
    \begin{equation}\label{eq:z-est(((0 0 2 2) (0 1 0 1)) (((2) (0)) (nil nil) ((0) (1)))):4}
      \sum_{K_1, L_0}\|T\| \left\| h_{K_1\times L_0}\right\|_{H^p(H^q)}\left\| \sum_{K_0, L_0'}\max_{L_0'}\left(\|T\| |K_0|^{1/{p}} |L_0|^{1/{q}} |K_0|^{1/{p'}} |L_0'|^{1/{q'}}\right) h_{K_1\times L_0'}\right\|_{(H^p(H^q))^*}.
    \end{equation}
    Consequently, we obtain
    \begin{equation}\label{eq:z-est(((0 0 2 2) (0 1 0 1)) (((2) (0)) (nil nil) ((0) (1)))):5}
      \sum_{K_1, L_0}\|T\| \left\| h_{K_1\times L_0}\right\|_{H^p(H^q)}\sum_{K_0}\max_{L_0'}\left(\|T\| |K_0|^{1/{p}} |L_0|^{1/{q}} |K_0|^{1/{p'}} |L_0'|^{1/{q'}}\right) \left\| \sum_{L_0'}h_{K_1\times L_0'}\right\|_{(H^p(H^q))^*}.
    \end{equation}
    %%%%%%%%%%%%%%%% BEGIN MANUAL EDIT %%%%%%%%%%%%%%%%
    Thus,~\eqref{eq:z-est(((0 0 2 2) (0 1 0 1)) (((2) (0)) (nil nil) ((0) (1)))):5} is bounded from
    above by
    \begin{equation*}
      \|T\|^2 \max_{L_0'} |L_0'|^{1/{q'}} \sum_{K_1, L_0} \sum_{K_0} |K_0| |L_0|^{2/{q}} |K_1|.
    \end{equation*}
    Using Hölder's inequality yields
    \begin{equation}\label{eq:z-est(((0 0 2 2) (0 1 0 1)) (((2) (0)) (nil nil) ((0) (1)))):6}
      \|T\|^2 \max_{L_0'} |L_0'|^{1/{q'}} |L_0|^{2/{q}-1}.
    \end{equation}
    Inserting $|L_0|, |L_0'|\leq \alpha$ (see~\eqref{eq:alpha-small}) into~\eqref{eq:z-est(((0 0 2
      2) (0 1 0 1)) (((2) (0)) (nil nil) ((0) (1)))):6}, we obtain the estimate
    \begin{equation}\label{eq:z-est(((0 0 2 2) (0 1 0 1)) (((2) (0)) (nil nil) ((0) (1)))):7}
      \|T\|^2 \alpha^{1/{q}}.
    \end{equation}
    %%%%%%%%%%%%%%%% END MANUAL EDIT %%%%%%%%%%%%%%%%
  \end{proofcase}

  %%%%%%%%%%%%%%%% BEGIN MANUAL EDIT %%%%%%%%%%%%%%%%
  \subsubsection*{Summary of \textref[Case~]{case:z-est(((0 0 2 2) (0 1 0 1)) (((2) (1)) ((0) (0))
      (nil nil))):left} and \textref[Case~]{case:z-est(((0 0 2 2) (0 1 0 1)) (((2) (0)) (nil nil)
      ((0) (1)))):right}}
  Combining~\eqref{eq:z-est(((0 0 2 2) (0 1 0 1)) (((2) (1)) ((0) (0)) (nil nil))):7}
  with~\eqref{eq:z-est(((0 0 2 2) (0 1 0 1)) (((2) (0)) (nil nil) ((0) (1)))):7} yields
  \begin{equation}\label{eq:z-est(((0 0 2 2) (0 1 0 1)) (((2) (1)) ((0) (0)) (nil nil)))(((0 0 2 2)
      (0 1 0 1)) (((2) (0)) (nil nil) ((0) (1))))-final}
    \cond_{\theta,\varepsilon} Z^2
    \leq \|T\|^2 \alpha^{1/2}.
  \end{equation}
  %%%%%%%%%%%%%%%% END MANUAL EDIT %%%%%%%%%%%%%%%%

  \begin{proofcase}[Case~\theproofcase, group~{\textrefp[f]{enu:proof:lem:var:Z:f:8}}: $K_0 = K_0' \neq K_1 = K_1'$, $L_0 = L_1' \neq L_0' = L_1$
    (((2) (0)) ((0) (1)) (nil nil)) -- left variant]\label{case:z-est(((0 0 2 2) (0 1 1 0)) (((2)
      (0)) ((0) (1)) (nil nil))):left}
    In this case, we have to estimate
    \begin{equation}\label{eq:z-est(((0 0 2 2) (0 1 1 0)) (((2) (0)) ((0) (1)) (nil nil))):0}
      \sum_{K_1, L_0, K_0, L_0'} \langle Th_{K_0\times L_0}, h_{K_0\times L_0'} \rangle \langle Th_{K_1\times L_0'}, h_{K_1\times L_0} \rangle.
    \end{equation}
    We put $a_{K_0, L_0, L_0'} = \langle T h_{K_0\times L_0}, h_{K_0\times L_0'} \rangle$ and note
    the estimate
    \begin{equation}\label{eq:z-est(((0 0 2 2) (0 1 1 0)) (((2) (0)) ((0) (1)) (nil nil))):1}
      |a_{K_0, L_0, L_0'}|
      \leq \|T\| |K_0|^{1/{p}} |L_0|^{1/{q}} |K_0|^{1/{p'}} |L_0'|^{1/{q'}}.
    \end{equation}
    Now, we write~\eqref{eq:z-est(((0 0 2 2) (0 1 1 0)) (((2) (0)) ((0) (1)) (nil nil))):0} as
    follows:
    \begin{equation}\label{eq:z-est(((0 0 2 2) (0 1 1 0)) (((2) (0)) ((0) (1)) (nil nil))):2}
      \sum_{K_1, L_0} \Bigl\langle T \sum_{K_0, L_0'} a_{K_0, L_0, L_0'} h_{K_1\times L_0'},  h_{K_1\times L_0} \Bigr\rangle.
    \end{equation}
    By duality, we obtain the subsequent upper estimate for~\eqref{eq:z-est(((0 0 2 2) (0 1 1 0))
      (((2) (0)) ((0) (1)) (nil nil))):2}:
    \begin{equation}\label{eq:z-est(((0 0 2 2) (0 1 1 0)) (((2) (0)) ((0) (1)) (nil nil))):3}
      \sum_{K_1, L_0}\left\|T \sum_{K_0, L_0'} a_{K_0, L_0, L_0'} h_{K_1\times L_0'}\right\|_{H^p(H^q)} \left\| h_{K_1\times L_0}\right\|_{(H^p(H^q))^*}.
    \end{equation}
    Estimate~\eqref{eq:z-est(((0 0 2 2) (0 1 1 0)) (((2) (0)) ((0) (1)) (nil nil))):1} and the
    disjointness of the dyadic intervals (see~\textrefp[J]{enu:j1}) yield
    \begin{equation}\label{eq:z-est(((0 0 2 2) (0 1 1 0)) (((2) (0)) ((0) (1)) (nil nil))):4}
      \sum_{K_1, L_0}\|T\| \left\| \sum_{K_0, L_0'}\max_{L_0'}\left(\|T\| |K_0|^{1/{p}} |L_0|^{1/{q}} |K_0|^{1/{p'}} |L_0'|^{1/{q'}}\right) h_{K_1\times L_0'}\right\|_{H^p(H^q)}\left\| h_{K_1\times L_0}\right\|_{(H^p(H^q))^*}.
    \end{equation}
    Consequently, we obtain
    \begin{equation}\label{eq:z-est(((0 0 2 2) (0 1 1 0)) (((2) (0)) ((0) (1)) (nil nil))):5}
      \sum_{K_1, L_0}\|T\| \sum_{K_0}\max_{L_0'}\left(\|T\| |K_0|^{1/{p}} |L_0|^{1/{q}} |K_0|^{1/{p'}} |L_0'|^{1/{q'}}\right) \left\| \sum_{L_0'}h_{K_1\times L_0'}\right\|_{H^p(H^q)}\left\| h_{K_1\times L_0}\right\|_{(H^p(H^q))^*}.
    \end{equation}
    %%%%%%%%%%%%%%%% BEGIN MANUAL EDIT %%%%%%%%%%%%%%%%
    Thus,~\eqref{eq:z-est(((0 0 2 2) (0 1 1 0)) (((2) (0)) ((0) (1)) (nil nil))):5} is bounded from
    above by
    \begin{equation}\label{eq:z-est(((0 0 2 2) (0 1 1 0)) (((2) (0)) ((0) (1)) (nil nil))):6}
      \|T\|^2 \max_{L_0'} |L_0'|^{1/{q'}} \sum_{K_1, L_0} \sum_{K_0} |K_0| |L_0| |K_1|
      \leq \|T\|^2 \max_{L_0'} |L_0'|^{1/{q'}}.
    \end{equation}
    Inserting $|L_0'|\leq \alpha$ (see~\eqref{eq:alpha-small}) into~\eqref{eq:z-est(((0 0 2 2) (0 1
      1 0)) (((2) (0)) ((0) (1)) (nil nil))):6}, we obtain the estimate
    \begin{equation}\label{eq:z-est(((0 0 2 2) (0 1 1 0)) (((2) (0)) ((0) (1)) (nil nil))):7}
      \|T\|^2 \alpha^{1/{q'}}.
    \end{equation}
    %%%%%%%%%%%%%%%% END MANUAL EDIT %%%%%%%%%%%%%%%%
  \end{proofcase}

  \begin{proofcase}[Case~\theproofcase, group~{\textrefp[f]{enu:proof:lem:var:Z:f:8}}: $K_0 = K_0' \neq K_1 = K_1'$, $L_0 = L_1' \neq L_0' = L_1$ (((2) (1)) (nil nil) ((0) (0))) -- right variant]\label{case:z-est(((0 0 2 2) (0 1 1 0)) (((2) (1)) (nil nil) ((0) (0)))):right}
    In this case, we have to estimate
    \begin{equation}\label{eq:z-est(((0 0 2 2) (0 1 1 0)) (((2) (1)) (nil nil) ((0) (0)))):0}
      \sum_{K_1, L_0', K_0, L_0} \langle Th_{K_0\times L_0}, h_{K_0\times L_0'} \rangle \langle Th_{K_1\times L_0'}, h_{K_1\times L_0} \rangle.
    \end{equation}
    We put $a_{K_0, L_0, L_0'} = \langle T h_{K_0\times L_0}, h_{K_0\times L_0'} \rangle$ and note
    the estimate
    \begin{equation}\label{eq:z-est(((0 0 2 2) (0 1 1 0)) (((2) (1)) (nil nil) ((0) (0)))):1}
      |a_{K_0, L_0, L_0'}|
      \leq \|T\| |K_0|^{1/{p}} |L_0|^{1/{q}} |K_0|^{1/{p'}} |L_0'|^{1/{q'}}.
    \end{equation}
    Now, we write~\eqref{eq:z-est(((0 0 2 2) (0 1 1 0)) (((2) (1)) (nil nil) ((0) (0)))):0} as
    follows:
    \begin{equation}\label{eq:z-est(((0 0 2 2) (0 1 1 0)) (((2) (1)) (nil nil) ((0) (0)))):2}
      \sum_{K_1, L_0'} \Bigl\langle T  h_{K_1\times L_0'}, \sum_{K_0, L_0} a_{K_0, L_0, L_0'} h_{K_1\times L_0} \Bigr\rangle.
    \end{equation}
    By duality, we obtain the subsequent upper estimate for~\eqref{eq:z-est(((0 0 2 2) (0 1 1 0))
      (((2) (1)) (nil nil) ((0) (0)))):2}:
    \begin{equation}\label{eq:z-est(((0 0 2 2) (0 1 1 0)) (((2) (1)) (nil nil) ((0) (0)))):3}
      \sum_{K_1, L_0'}\left\|T  h_{K_1\times L_0'}\right\|_{H^p(H^q)} \left\|\sum_{K_0, L_0} a_{K_0, L_0, L_0'} h_{K_1\times L_0}\right\|_{(H^p(H^q))^*}.
    \end{equation}
    Estimate~\eqref{eq:z-est(((0 0 2 2) (0 1 1 0)) (((2) (1)) (nil nil) ((0) (0)))):1} and the
    disjointness of the dyadic intervals (see~\textrefp[J]{enu:j1}) yield
    \begin{equation}\label{eq:z-est(((0 0 2 2) (0 1 1 0)) (((2) (1)) (nil nil) ((0) (0)))):4}
      \sum_{K_1, L_0'}\|T\| \left\| h_{K_1\times L_0'}\right\|_{H^p(H^q)}\left\| \sum_{K_0, L_0}\max_{L_0}\left(\|T\| |K_0|^{1/{p}} |L_0|^{1/{q}} |K_0|^{1/{p'}} |L_0'|^{1/{q'}}\right) h_{K_1\times L_0}\right\|_{(H^p(H^q))^*}.
    \end{equation}
    Consequently, we obtain
    \begin{equation}\label{eq:z-est(((0 0 2 2) (0 1 1 0)) (((2) (1)) (nil nil) ((0) (0)))):5}
      \sum_{K_1, L_0'}\|T\| \left\| h_{K_1\times L_0'}\right\|_{H^p(H^q)}\sum_{K_0}\max_{L_0}\left(\|T\| |K_0|^{1/{p}} |L_0|^{1/{q}} |K_0|^{1/{p'}} |L_0'|^{1/{q'}}\right) \left\| \sum_{L_0}h_{K_1\times L_0}\right\|_{(H^p(H^q))^*}.
    \end{equation}
    %%%%%%%%%%%%%%%% BEGIN MANUAL EDIT %%%%%%%%%%%%%%%%
    Thus,~\eqref{eq:z-est(((0 0 2 2) (0 1 1 0)) (((2) (1)) (nil nil) ((0) (0)))):5} is bounded from
    above by
    \begin{equation}\label{eq:z-est(((0 0 2 2) (0 1 1 0)) (((2) (1)) (nil nil) ((0) (0)))):6}
      \|T\|^2 \max_{L_0}  |L_0|^{1/{q}} \sum_{K_1, L_0'} \sum_{K_0} |K_1| |K_0| |L_0'|
      \leq \|T\|^2 \max_{L_0}  |L_0|^{1/{q}}.
    \end{equation}
    Inserting $|L_0|\leq \alpha$ (see~\eqref{eq:alpha-small}) into~\eqref{eq:z-est(((0 0 2 2) (0 1 1
      0)) (((2) (1)) (nil nil) ((0) (0)))):6}, we obtain the estimate
    \begin{equation}\label{eq:z-est(((0 0 2 2) (0 1 1 0)) (((2) (1)) (nil nil) ((0) (0)))):7}
      \|T\|^2 \alpha^{1/{q}}.
    \end{equation}
    %%%%%%%%%%%%%%%% END MANUAL EDIT %%%%%%%%%%%%%%%%
  \end{proofcase}

  %%%%%%%%%%%%%%%% BEGIN MANUAL EDIT %%%%%%%%%%%%%%%%
  \subsubsection*{Summary of \textref[Case~]{case:z-est(((0 0 2 2) (0 1 1 0)) (((2) (0)) ((0) (1))
      (nil nil))):left} and \textref[Case~]{case:z-est(((0 0 2 2) (0 1 1 0)) (((2) (1)) (nil nil)
      ((0) (0)))):right}}
  Combining~\eqref{eq:z-est(((0 0 2 2) (0 1 1 0)) (((2) (0)) ((0) (1)) (nil nil))):7}
  with~\eqref{eq:z-est(((0 0 2 2) (0 1 1 0)) (((2) (1)) (nil nil) ((0) (0)))):7} yields
  \begin{equation}\label{eq:z-est(((0 0 2 2) (0 1 1 0)) (((2) (0)) ((0) (1)) (nil nil)))(((0 0 2 2)
      (0 1 1 0)) (((2) (1)) (nil nil) ((0) (0))))-final}
    \cond_{\theta,\varepsilon} Z^2
    \leq \|T\|^2 \alpha^{1/2}.
  \end{equation}
  %%%%%%%%%%%%%%%% END MANUAL EDIT %%%%%%%%%%%%%%%%

  %%%%%%%%%%%%%%%% BEGIN MANUAL EDIT %%%%%%%%%%%%%%%%
  \subsubsection*{Summary for $Z$}
  
  Combining~\eqref{eq:z-est(((0 0 0 0) (0 1 0 1)) (((0) (1)) (nil (0)) (nil nil))):7},
  \eqref{eq:z-est(((0 0 0 0) (0 1 1 0)) (((0) (0)) (nil (1)) (nil nil))):7}, \eqref{eq:z-est(((0 1 0
    1) (0 0 0 0)) (((1) (0)) ((0) nil) (nil nil))):7}, \eqref{eq:z-est(((0 1 0 1) (0 1 0 1)) (((1)
    (1)) ((0) (0)) (nil nil)))(((0 1 0 1) (0 1 0 1)) (((0) (0)) (nil nil) ((1) (1))))-final},
  \eqref{eq:z-est(((0 1 0 1) (0 1 1 0)) (((0) (1)) (nil nil) ((1) (0))))(((0 1 0 1) (0 1 1 0)) (((1)
    (0)) ((0) (1)) (nil nil)))-final}, \eqref{eq:z-est(((0 1 0 1) (0 0 2 2)) (((1) (2)) ((0) (0))
    (nil nil)))(((0 1 0 1) (0 0 2 2)) (((0) (2)) (nil nil) ((1) (0))))-final}, \eqref{eq:z-est(((0 1
    1 0) (0 0 0 0)) (((0) (0)) ((1) nil) (nil nil))):7}, \eqref{eq:z-est(((0 1 1 0) (0 1 0 1)) (((0)
    (1)) ((1) (0)) (nil nil)))(((0 1 1 0) (0 1 0 1)) (((1) (0)) (nil nil) ((0) (1))))-final},
  \eqref{eq:z-est(((0 1 1 0) (0 1 1 0)) (((0) (0)) ((1) (1)) (nil nil)))(((0 1 1 0) (0 1 1 0)) (((1)
    (1)) (nil nil) ((0) (0))))-final}, \eqref{eq:z-est(((0 1 1 0) (0 0 2 2)) (((0) (2)) ((1) (0))
    (nil nil)))(((0 1 1 0) (0 0 2 2)) (((1) (2)) (nil nil) ((0) (0))))-final}, \eqref{eq:z-est(((0 0
    2 2) (0 1 0 1)) (((2) (1)) ((0) (0)) (nil nil)))(((0 0 2 2) (0 1 0 1)) (((2) (0)) (nil nil) ((0)
    (1))))-final} and~\eqref{eq:z-est(((0 0 2 2) (0 1 1 0)) (((2) (0)) ((0) (1)) (nil nil)))(((0 0 2
    2) (0 1 1 0)) (((2) (1)) (nil nil) ((0) (0))))-final} yields
  \begin{equation}\label{eq:z-est:final}
    \cond_{\theta,\varepsilon} Z^2
    \leq 12 \|T\|^2 \alpha^{1/2}.
  \end{equation}
  %%%%%%%%%%%%%%%% END MANUAL EDIT %%%%%%%%%%%%%%%%
\end{myproof}

%%% Local Variables:
%%% mode: latex
%%% TeX-master: "main"
%%% End:

\subsection*{Acknowledgments}\hfill\\
\noindent
It is my pleasure to thank P.F.X.~Müller for many helpful discussions.  Supported by the Austrian
Science Foundation (FWF) Pr.Nr. P28352.

%%% Local Variables:
%%% mode: latex
%%% TeX-master: "main"
%%% End:

\bibliographystyle{abbrv}
\bibliography{bibliography}

\begin{thebibliography}{10}

\bibitem{blower:1990}
G.~Blower.
\newblock The {B}anach space {$B(l^2)$} is primary.
\newblock {\em Bull. London Math. Soc.}, 22(2):176--182, 1990.

\bibitem{bourgain:1983}
J.~Bourgain.
\newblock On the primarity of {$H^{\infty }$}-spaces.
\newblock {\em Israel J. Math.}, 45(4):329--336, 1983.

\bibitem{bourgain:tzafriri:1987}
J.~Bourgain and L.~Tzafriri.
\newblock Invertibility of ``large'' submatrices with applications to the
  geometry of {B}anach spaces and harmonic analysis.
\newblock {\em Israel J. Math.}, 57(2):137--224, 1987.

\bibitem{jones:1985}
P.~W. Jones.
\newblock B{MO} and the {B}anach space approximation problem.
\newblock {\em Amer. J. Math.}, 107(4):853--893, 1985.

\bibitem{laustsen:lechner:mueller:2015}
N.~J. {Laustsen}, R.~{Lechner}, and P.~F.~X. {M{\"u}ller}.
\newblock {Factorization of the identity through operators with large
  diagonal}.
\newblock {\em ArXiv e-prints}, Sept. 2015.

\bibitem{lechner:2016-factor-mixed}
R.~{Lechner}.
\newblock {Factorization in mixed norm Hardy and BMO spaces}.
\newblock {\em Studia Math., to appear. Preprint available on ArXiv}.

\bibitem{lechner:2017-local-factor-SL}
R.~{Lechner}.
\newblock {Direct sums of finite dimensional $SL^\infty_n$ spaces}.
\newblock {\em ArXiv e-prints}, Sept. 2017.

\bibitem{lechner:2018:1-d}
R.~{Lechner}.
\newblock {Dimension dependence of factorization problems: Hardy spaces and
  $SL\_n^\infty$}.
\newblock {\em ArXiv e-prints}, Feb. 2018.

\bibitem{lechner:mueller:2015}
R.~Lechner and P.~F.~X. M\"uller.
\newblock Localization and projections on bi-parameter {BMO}.
\newblock {\em Q. J. Math.}, 66(4):1069--1101, 2015.

\bibitem{mueller:1988}
P.~F.~X. M{\"u}ller.
\newblock On projections in {$H^1$} and {BMO}.
\newblock {\em Studia Math.}, 89(2):145--158, 1988.

\bibitem{mueller:2005}
P.~F.~X. M{\"u}ller.
\newblock {\em Isomorphisms between {$H\sp 1$} spaces}, volume~66 of {\em
  Instytut Matematyczny Polskiej Akademii Nauk. Monografie Matematyczne (New
  Series) [Mathematics Institute of the Polish Academy of Sciences.
  Mathematical Monographs (New Series)]}.
\newblock Birkh\"auser Verlag, Basel, 2005.

\bibitem{mueller:2012}
P.~F.~X. M{\"u}ller.
\newblock Two remarks on primary spaces.
\newblock {\em Math. Proc. Cambridge Philos. Soc.}, 153(3):505--523, 2012.

\bibitem{wark:2007:class}
H.~M. Wark.
\newblock A class of primary {B}anach spaces.
\newblock {\em J. Math. Anal. Appl.}, 326(2):1427--1436, 2007.

\bibitem{wark:2007:direct}
H.~M. Wark.
\newblock The {$l^\infty$} direct sum of {$L^p$} {$(1<p<\infty)$} is primary.
\newblock {\em J. Lond. Math. Soc. (2)}, 75(1):176--186, 2007.

\end{thebibliography}
% \nocite{*}

\end{document}